\documentclass{amsart}

\usepackage{amsmath, amsthm, amssymb}
\usepackage{amsfonts}
\usepackage{amsmath}
\usepackage{amssymb}
\usepackage{array}
\usepackage{stmaryrd}
\usepackage{graphicx}
\usepackage{color}
\usepackage{tikz}

\usepackage{enumitem}

\newtheorem{theorem}{Theorem}[section]
\newtheorem{lemma}[theorem]{Lemma}

\newtheorem{corollary}[theorem]{Corollary}

\newtheorem{remark}[theorem]{\it Remark}
\newtheorem{theorema}{Theorem}

\numberwithin{equation}{section}




\newcommand{\pt}{\partial}

\newcommand {\beq} {\begin{equation}}
\newcommand {\eeq} {\end{equation}}
\renewcommand{\sim}{\simeq}

\newcommand {\A} {{\mathcal B}}
\newcommand {\B} {{\mathcal A}}
\newcommand {\F} {{\mathcal F}}
\newcommand{\ttau}{{\tilde\tau}}
\newcommand {\U} {{\mathcal U}}

\newcommand{\LL}{{\mathcal L}}

\newcommand{\RR}{{\mathcal R}}
\newcommand{\R}{\mathbb{R}}

\definecolor{blue}{rgb}{0,0,0.7}

\begin{document}

\title[L2 method for a fractional-derivative problem]%
{
Error analysis of an L2-type method on graded meshes for a fractional-order parabolic problem}

\author{Natalia Kopteva}
\address{Department of Mathematics and Statistics,
University of Limerick, Limerick, Ireland}
\email{natalia.kopteva@ul.ie}

\thanks{This research was  supported by
Science Foundation Ireland Grant SFI/12/IA/1683.}


\date{}

\keywords{fractional-order parabolic equation,
L2 scheme, graded temporal mesh, arbitrary degree of grading, pointwise-in-time error bounds}

\subjclass{Primary 65M15, 65M60}

\maketitle

\begin{abstract}
An initial-boundary value problem with a Caputo time derivative of fractional order $\alpha\in(0,1)$ is considered,
 solutions of which typically exhibit a singular behaviour at an initial time.
 An L2-type discrete fractional-derivative operator of order $3-\alpha$ is
 considered on nonuniform temporal meshes.
 Sufficient conditions for the inverse-monotonicity of this operator are established,
 which yields sharp pointwise-in-time error bounds
on quasi-graded temporal meshes with arbitrary degree of grading.
In particular, those results imply that milder (compared to the optimal) grading yields optimal convergence rates in positive time.
Semi-discretizations in time and full discretizations are addressed.
The theoretical findings are illustrated by numerical experiments.
\end{abstract}

\section{Introduction}

The Caputo time derivative of fractional order $\alpha\in(0,1)$, which will be denoted by $D_t^\alpha$, is  defined \cite{Diet10} by
\begin{equation}\label{CaputoEquiv}
D_t^{\alpha} u(\cdot,t) :=  \frac1{\Gamma(1-\alpha)} \int_{0}^t(t-s)^{-\alpha}\, \pt_s u(\cdot, s)\, ds
    \qquad\text{for }\ 0<t \le T,
\end{equation}
where $\Gamma(\cdot)$ is the Gamma function, and $\pt_s$ denotes the partial derivative in $s$.

The paper is devoted to the analysis of an L2-type
discrete fractional-derivative operator for $D_t^{\alpha}$ from \cite{higher_order},
 based on piecewise-quadratic Lagrange interpolants.
 In \cite{higher_order}, this operator
is analysed  on uniform temporal meshes, and the optimal convergence order
 $3-\alpha$ in time is established
under strong regularity assumptions on the exact solution. (Similar L2-type discretizations of order $3-\alpha$
on uniform temporal meshes
were considered, e.g., in articles \cite{jcp_gao,jcp_xing_yan}, the latter  giving optimal error bounds in positive time taking into account
more realistic low regularity of the exact solution.)

The purpose of this paper is consider this discrete fractional-derivative operator on more general quasi-graded temporal meshes.
For this, we employ
the framework from the recent paper \cite{NK_XM} (which builds on the analysis of \cite{NK_MC_L1}, and, to some degree,~\cite{ChenMS_JSC}).
This approach is based on barrier functions for derivation of subtle stability properties, and allows, in a relatively simple way, to
get sharp pointwise-in-time error bounds on quasi-graded temporal meshes with arbitrary degree of grading.
\begin{itemize}[leftmargin=0.7cm]
\item
However, compared to the two methods considered in \cite{NK_XM}, the L1 scheme and the Alikhanov L2-1${}_\sigma$ scheme, now we have
a significantly more challenging case, as the considered discrete fractional-derivative operator 
is not associated with an M-matrix. So our main challenge in this paper will be to establish
the inverse-monotonicity of the discrete operator on nonuniform meshes.

\item
For the same reason, the generalization of our error analysis to the parabolic case also becomes substantially more challenging.
\end{itemize}
\noindent
Note that the inverse-monotonicity on uniform temporal meshes was established in \cite{higher_order}.
However, the evaluations in the latter article are quite intricate, 
so it is not clear whether they can be generalized to  more general meshes.
We take a very different route and
employ a non-standard set of basis functions (see Fig.\,\ref{fig_basis}),
which very naturally leads to a representation of  the discrete operator as a product of two M-matrices.
{\color{blue}To be more precise, the discrete version $\delta_t^\alpha$ of the Caputo fractional-derivative operator $D_t^\alpha$
will be represented in the form
\begin{subequations}\label{UV_ab}
\beq\label{UV}
\delta_t^\alpha U^m=\sum_{j=0}^m \kappa_{m,j}V^j\;\;\forall\,m\ge 1,
\quad V^j:=\frac{U^j-\beta_j U^{j-1}}{1-\beta_j}\;\;\forall\,j\ge 1,\quad
V^0:=U^0,
\eeq
where $\beta_j\in [0,1)$.
Then relatively simple sufficient conditions
will be formulated
 for
choosing a set $\{\beta_j\}$ such that
\beq\label{UV_kappa}
\kappa_{m,m}>0\;\;\mbox{and}\;\;\sum_{j=0}^{m}\kappa_{m,j}=0\;\;\forall\,m\ge 1,\quad \kappa_{m,j}\le 0\;\;\forall\,0\le j<m\le M.
\eeq
\end{subequations}%
As the representation \eqref{UV_ab} immediately implies that $\delta_t^\alpha$ is associated with an inverse-monotone matrix (see Remark~\ref{rem_M_matrices}),
the required stability properties of the discrete fractional-derivative operator follow, which  enables us to employ the error analysis framework from~\cite{NK_XM}.}%



This error analysis will be applied for 
the fractional-order parabolic problem
\beq\label{problem}
\begin{array}{l}
D_t^{\alpha}u+\LL u=f(x,t)\quad\mbox{for}\;\;(x,t)\in\Omega\times(0,T],\\[0.2cm]
u(x,t)=0\quad\mbox{for}\;\;(x,t)\in\pt\Omega\times(0,T],\qquad
u(x,0)=u_0(x)\quad\mbox{for}\;\;x\in\Omega.
\end{array}
\eeq
This problem is posed in a bounded Lipschitz domain  $\Omega\subset\R^d$ (where $d\in\{1,2,3\}$).
The spatial operator $\LL$ here is a linear second-order elliptic operator defined by
\beq\label{LL_def}
\LL u := \sum_{k=1}^d \bigl\{-\pt_{x_k}\!(a_k(x)\,\pt_{x_k}\!u)  \bigr\}+c(x)\, u,
\eeq
with sufficiently smooth coefficients $\{a_k\}$ and $c$ in $C(\bar\Omega)$, for which we assume that $a_k>0$
and  $c\ge 0$ in $\bar\Omega$.

The L2-type fractional-derivative operator that we consider, denoted $\delta_{t}^{\alpha}$, is defined as follows.
On the temporal mesh $0=t_0<t_1<\ldots <t_M=T$,  $\forall\,m= 1,\ldots, M$ let
\begin{subequations}\label{delta_t_def}
\beq
\delta_{t}^{\alpha} U^m :=D^\alpha_t (\Pi^mU)(t_m),\quad
\quad
\Pi^m:=\left\{\begin{array}{cll}
\Pi_{1,1}&\mbox{on~}(0,t_1)&\mbox{for~}m=1,\\
\Pi_{2,j}&\mbox{on~}(t_{j-1},t_j)&\mbox{for~}1\le j<m,\\
\Pi_{2,j-1}&\mbox{on~}(t_{j-1},t_j)\;&\mbox{for~}j=m>1,\\
\end{array}\right.
\eeq
where
 $\Pi_{1,j}$ and $\Pi_{2,j}$ are the standard linear and quadratic Lagrange interpolation operators with the following
 interpolation points:
\beq
\Pi_{1,j}\;:\;\{t_{j-1},t_j\},\qquad\qquad
\Pi_{2,j}\;:\;\{t_{j-1},t_j,t_{j+1}\}.
\eeq
\end{subequations}

Similarly to \cite{stynes_etal_sinum17,NK_MC_L1,ChenMS_JSC}, our main interest will be in graded temporal meshes as they offer an efficient way of computing reliable numerical approximations of solutions singular at $t=0$,
which is typical for \eqref{problem}.
It should be noted that these three papers are concerned with global-in-time error bounds on graded meshes. There is also a lot of interest in the literature  in optimal error bounds in positive time on uniform meshes;
see, e.g. \cite{gracia_etal_cmame,laz_review,NK_MC_L1}.
By contrast,
here, following the recent paper \cite{NK_XM},  pointwise-in-time error bounds will be obtained, while
an arbitrary degree of mesh grading (with uniform meshes included as a particular case) is allowed.
In particular, our results imply that milder (compared to the optimal) grading yields optimal convergence rates in positive time; see Remarks~\ref{rem_positive_time} and~\ref{rem_global_time}.

Throughout the paper, it is assumed that there exists a unique solution of this problem 
such that $\|\pt_t^l u(\cdot,t)\|_{L_2(\Omega)}\lesssim 1+t^{\alpha-l}$ for $l\le 3$.
This is a realistic assumption, satisfied by typical solutions of problem \eqref{problem},
in contrast to stronger assumptions of type $\|\pt^l u(\cdot,t)\|_{L_2(\Omega)}\lesssim 1$ frequently made in the literature
(see, e.g., references in \cite[Table~1.1]{laz_2fully_16}).
Indeed,
 \cite[Theorem~2.1]{stynes_too_much_reg} shows
that
if a solution $u$ of \eqref{problem} is less singular than we assume, 
then the initial condition $u_0$ is uniquely defined by the other data of the problem, which is clearly too restrictive.
At the same time, our results can be easily applied to the case of $u$ having no
singularities or exhibiting a somewhat different singular behaviour at $t=0$
{\color{blue}(see Remark~\ref{rem_gen_sing}).}
\smallskip

{\it Outline.}
Sufficient conditions for inverse-monotonicity of the discrete fractional-derivative operator are established in \S\ref{sec_inv_monot},
which enables us to establish its stability properties on quasi-graded meshes in~\S\ref{sec_stab}.
Error analysis for a simplest example without spatial derivatives is given  in \S\ref{sec_paradigm},
while semi-discretizations in time and full discretizations for the parabolic case are addressed in \S\ref{sec_parabolic}.
Finally, our theoretical findings are illustrated by numerical experiments in \S\ref{sec_Num}.
\smallskip

{\it Notation.}
We write
 $a\sim b$ when $a \lesssim b$ and $a \gtrsim b$, and
$a \lesssim b$ when $a \le Cb$ with a generic positive constant $C$ depending on $\Omega$, $T$, $u_0$ and
$f$,
but not 
%
 on the total numbers of degrees of freedom in space or time.
  Also, for 
  $k \ge 0$,
  we shall use the standard norms
  in the space $L_2(\Omega)$ and the related Sobolev spaces $W_2^k(\Omega)$,
  while  $H^1_0(\Omega)$ is the standard space of functions in $W_2^1(\Omega)$ vanishing on $\pt\Omega$.

\section{Inverse-monotonicity of the discrete fractional-derivative operator}\label{sec_inv_monot}
In this section we shall establish sufficient conditions on the temporal mesh $\{t_j\}_{j=0}^M$
for the inverse-monotonicity of the discrete fractional-derivative operator $\delta_t^\alpha$. The latter is understood
in the sense that the matrix associated with $\delta_t^\alpha$ is inverse-monotone, i.e. all elements of the inverse of
this matrix are non-negative.

The following notation for the temporal mesh will be used throughout the paper:
\beq\label{grid_notation}
\tau_j:=t_j-t_{j-1},\quad \ttau_j:={\textstyle\frac12}(\tau_{j-1}+\tau_j),\quad
\rho_j:=\frac{\tau_{j}}{\tau_{j-1}},
\quad
\sigma_j:=\frac{\tau_{j}-\tau_{j-1}}{\tau_j+\tau_{j-1}}
=1-\frac{2}{1+\rho_j}.
\eeq

\subsection{Matrix product representation for the discrete fractional-deri\-vat\-ive operator}
{\color{blue}Our first task  will be to
find a representation for $\delta_{t}^{\alpha}$ in the form \eqref{UV}, where the
set of real numbers $\{\beta_j\}_{j=0}^M$, with $\beta_j\in[0,1)$ and $\beta_0=0$,
is such that \eqref{UV_kappa} is satisfied.}

%
\begin{remark}[Inverse monotonicity]\label{rem_M_matrices}
Set $F^m:=\delta_t^\alpha U^m$ for $m=1,\ldots,M$ and
augment these equations by $F^0=U^0$. Now \eqref{UV_ab} yields
 the representation
$\vec{F}=A_1\vec{V}$ with $\vec{V}=A_2\vec{U}$, or simply $\vec{F}=A_1 A_2\vec{U}$,
where $A_1$ and $A_2$ are
 $(M+1)\times (M+1)$ matrices, and the notation of type
 $\vec{U}:=\{U^j\}_{j=0}^M$ is used for the corresponding column vectors.
Being M-matrices (i.e. diagonally dominant, with non-positive off-diagonal elements),
both $A_1$ and $A_2$ are inverse-monotone, hence the product $A_1A_2$
is also inverse-monotone (i.e. the elements of its inverse are non-negative).
Thus \eqref{UV_ab} implies that the operator $\delta_t^\alpha$
is associated with an inverse-monotone matrix.
\end{remark}

To describe  a representation of type \eqref{UV} in a simple way on an arbitrary temporary mesh, we shall
employ a \underline{non-standard basis} $\{\Phi^j(t_k)\}_{j=0}^M$
for  functions in $\R^{M+1}$ associated with the mesh $\{t_k\}_{k=0}^M$, which is defined by
\beq\label{Phi_basis}
\Phi^j(t_k):=0\;\mbox{for}\;k\le j-1,\quad \Phi^j(t_j):=1,\quad
\Phi^j(t_k):=\beta_k\Phi^j(t_{k-1})\;\mbox{for}\;k\ge j+1
\eeq
(see Fig.\,\ref{fig_basis} (left)).

  \begin{figure}[t!]
\noindent%
\begin{tikzpicture}[scale=0.23]
  \draw[->] (-0.2,0) -- (24.5,0) ;
  \draw[->] (0,-0.2) -- (0,11.5) ;
  \node[left] at (0,10) {$1$};
  \node[left] at (0,6) {$\beta_{j+1}$};
  \node[left] at (0,3.4) {$\beta_{j+1}\beta_{j+2}$};
  \node[left] at (0,1.2) {$\beta_{j+1}\beta_{j+2}\beta_{j+3}$};
\path[draw, thick] (0,0)--(5.5,0)--(8.5,10)--(12,6)--(16,3)--(21,1.5)--(24,1);

\draw[fill] (0,0) circle [radius=0.3] node[below] {$0$};
\draw[fill] (1.5,0) circle [radius=0.3];
\draw[fill] (3.5,0) circle [radius=0.3];
\draw[fill] (5.5,0) circle [radius=0.3] node[below] {$t_{j-1}$};
\draw[fill] (8.5,10) circle [radius=0.3]node[above right] {$\Phi^j$};
\draw[fill] (16,3) circle [radius=0.3];
\draw[fill] (12,6) circle [radius=0.3];
\draw[fill] (21,1.5) circle [radius=0.3];

\path[draw,help lines]  (-0.3,10)--(8.9,10);
\path[draw,help lines]  (-0.3,6)--(12.4,6);
\path[draw,help lines]  (-0.3,3)--(16.4,3);
\path[draw,help lines]  (-0.3,1.5)--(21.4,1.5);

\path[draw,help lines]  (8.5,-0.3)--(8.5,10.4);
\path[draw,help lines]  (12,-0.3)--(12,6.4);
\path[draw,help lines]  (16,-0.3)--(16,3.4);
\path[draw,help lines]  (21,-0.3)--(21,1.9);

\node[below]  at (8.5,0) {$t_{j}$};
\node[below]  at (12,0) {$t_{j+1}$};
\node[below]  at (16,0) {$t_{j+2}$};
\node[below]  at (21,0) {$t_{j+3}$};
\end{tikzpicture}%
\hfill%
\begin{tikzpicture}[scale=0.23]
  \draw[->] (-0.2,0) -- (18,0) ;
  \draw[->] (0,-0.2) -- (0,11.5) ;
  \node[left] at (0,10) {$1$};
\path[draw, thick] (0,0)--(5.5,0)--(8.5,10)--(12,0)--(16,0)--(17,0);

\draw[fill] (0,0) circle [radius=0.3] node[below] {$0$};
\draw[fill] (1.5,0) circle [radius=0.3];
\draw[fill] (3.5,0) circle [radius=0.3];
\draw[fill] (5.5,0) circle [radius=0.3] node[below] {$t_{j-1}$};
\draw[fill] (8.5,10) circle [radius=0.3]node[above right] {$\phi^j$};
\draw[fill] (16,0) circle [radius=0.3];
\draw[fill] (12,0) circle [radius=0.3];

\path[draw,help lines]  (-0.3,10)--(8.9,10);
\path[draw,help lines]  (8.5,-0.3)--(8.5,10.4);

\node[below]  at (8.5,0) {$t_{j}$};
\node[below]  at (12,0) {$t_{j+1}$};
\node[below]  at (16,0) {$t_{j+2}$};
%
\end{tikzpicture}%
\vspace{-0.3cm}
 \caption{\label{fig_basis}\it\small
Non-standard basis $\{\Phi^j\}$  from \eqref{Phi_basis} (left) and
 hat-function basis $\{\phi^j\}$ (right).}
 \end{figure}
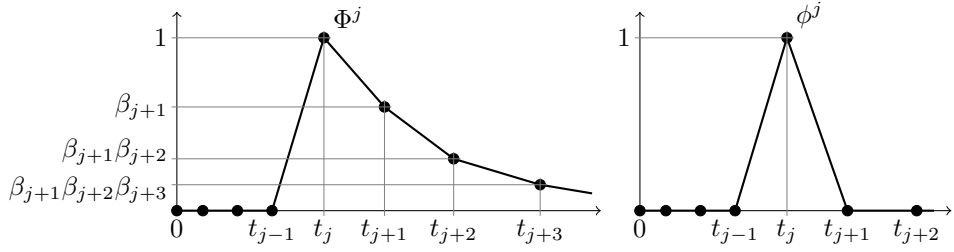

\begin{lemma}
Given a set  $\{\beta_j\}_{j=1}^M$ with $\beta_j\in[0,1)$  and the basis \eqref{Phi_basis}, the coefficients $\kappa_{m,j}$ in \eqref{UV}
are described by
\beq\label{kappa_basis}
\frac{\kappa_{m,j}}{1-\beta_j}=D_t^\alpha(\Pi^m\Phi^j)(t_m)\qquad\forall\,0\le j\le m\le M.
\eeq
\end{lemma}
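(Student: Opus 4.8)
The plan is to expand the discrete operator in the non-standard basis and check that the claimed formula for $\kappa_{m,j}$ is the only consistent choice. Concretely, I would first observe that the map $\{V^j\}\mapsto\{U^j\}$ given by inverting the relation $V^j=(U^j-\beta_j U^{j-1})/(1-\beta_j)$ (with $V^0=U^0$) is exactly the linear map whose matrix has the basis functions $\Phi^j$ as its columns: that is, I claim $U^k=\sum_{j=0}^M V^j\,\Phi^j(t_k)$ for all $k$. This is the natural ``dual'' statement to \eqref{Phi_basis}, and I would verify it by a short induction on $k$. For $k=0$ only $\Phi^0(t_0)=1$ contributes, giving $U^0=V^0$, which matches $V^0:=U^0$. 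For the inductive step, using $\Phi^j(t_k)=\beta_k\Phi^j(t_{k-1})$ for $j\le k-1$, $\Phi^k(t_k)=1$, and $\Phi^j(t_k)=0$ for $j>k$, the sum telescopes to $\beta_k\sum_{j\le k-1}V^j\Phi^j(t_{k-1})+V^k=\beta_k U^{k-1}+V^k$, and since $V^k=(U^k-\beta_k U^{k-1})/(1-\beta_k)$ we have $\beta_k U^{k-1}+(1-\beta_k)V^k=U^k$; but the coefficient in front of $V^k$ in the representation should be $\Phi^k(t_k)=1$, not $1-\beta_k$. To reconcile this I would instead carry the factor $1-\beta_k$ into the definition, i.e. work with the identity $U^k=\sum_{j=0}^M (1-\beta_j)V^j\,\tilde\Phi^j(t_k)$ where $\tilde\Phi^j$ is normalized so that the recurrence absorbs the $(1-\beta_j)$; alternatively, and more cleanly, I would simply define $W^j:=(1-\beta_j)V^j=U^j-\beta_j U^{j-1}$ for $j\ge1$, $W^0:=U^0$, and prove $U^k=\sum_{j=0}^k W^j\Phi^j(t_k)$ directly by the telescoping argument above — this now goes through without the stray factor.

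With the representation $U^k=\sum_{j} W^j\,\Phi^j(t_k)$ in hand, the rest is immediate by linearity of the interpolation and of $D_t^\alpha$. I would write
\[
\delta_t^\alpha U^m = D_t^\alpha(\Pi^m U)(t_m)
= D_t^\alpha\Bigl(\Pi^m\sum_{j=0}^M W^j\Phi^j\Bigr)(t_m)
= \sum_{j=0}^M W^j\,D_t^\alpha(\Pi^m\Phi^j)(t_m),
\]
where I have used that $\Pi^m$ acts on the nodal values $\{U^k\}$ linearly, so $\Pi^m(\sum_j W^j\Phi^j)=\sum_j W^j\,\Pi^m\Phi^j$ (each $\Phi^j$ is itself an element of $\R^{M+1}$ specified by its nodal values). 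Since $W^j=(1-\beta_j)V^j$, comparing with $\delta_t^\alpha U^m=\sum_{j=0}^m\kappa_{m,j}V^j$ forces
\[
\kappa_{m,j}=(1-\beta_j)\,D_t^\alpha(\Pi^m\Phi^j)(t_m),
\]
which is exactly \eqref{kappa_basis}. I would also note that the sum over $j$ may be truncated at $j=m$: because $\Phi^j(t_k)=0$ for $k\le j-1$, the interpolant $\Pi^m\Phi^j$ uses only nodes $t_{j-1},\dots$ and in fact vanishes identically on $(0,t_{j-1})$, while $\Pi^m$ for the evaluation at $t_m$ only involves nodes up to $t_m$; hence $D_t^\alpha(\Pi^m\Phi^j)(t_m)=0$ whenever $j>m$, consistent with $\kappa_{m,j}$ being defined only for $j\le m$.

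The main obstacle, as in the induction above, is bookkeeping rather than depth: one must be careful that the $\Phi^j$ are genuinely the right ``inverse basis'' — i.e. that the matrix $A_2$ in Remark~\ref{rem_M_matrices} sending $\vec U\mapsto\vec V$ has inverse with columns (up to the diagonal scaling by $1-\beta_j$) given by the nodal values $\Phi^j(t_k)$. I would make this precise by writing $A_2$ explicitly as the bidiagonal matrix with diagonal entries $1/(1-\beta_j)$ (and $1$ in the $(0,0)$ slot) and subdiagonal entries $-\beta_j/(1-\beta_j)$, then checking that $(A_2^{-1})_{k,j}=(1-\beta_j)\Phi^j(t_k)$ solves $A_2 A_2^{-1}=I$ using the recurrence \eqref{Phi_basis}; this is a one-line verification row by row. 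Once that identification is established the lemma follows purely formally, so I would keep the write-up short: state the $U^k=\sum_j W^j\Phi^j(t_k)$ identity as the key step, give the two-line induction, and then apply linearity of $\Pi^m$ and $D_t^\alpha$.
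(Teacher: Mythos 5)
Your proposal is correct and follows essentially the same route as the paper: both rest on the expansion $U^k=\sum_{j}(1-\beta_j)V^j\,\Phi^j(t_k)$ (your $W^j=(1-\beta_j)V^j$ is just a relabelling of the paper's coefficient $(1-\beta_j)V^j$), verified by the same telescoping computation, followed by linearity of $\Pi^m$ and $D_t^\alpha$. The only slight imprecision is your claim that $\Pi^m\Phi^j$ vanishes on $(0,t_{j-1})$ — the forward-reaching quadratic interpolant is nonzero on $(t_{j-2},t_{j-1})$ when $j\le m$ — but this is harmless since you only invoke it for $j>m$, where $\Pi^m\Phi^j$ does vanish on all of $(0,t_m)$.
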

\begin{proof}
The definition of $\{V^j\}$ in \eqref{UV} is equivalent to the following basis expansion of $\{U^j\}$:
\beq\label{UV2}
U^k=\sum_{j=0}^M V^j(1-\beta_j)\,\Phi^j(t_k)\qquad \forall\,k=0,\ldots M.
\eeq
Indeed, by \eqref{Phi_basis}, for $k=0$ this yields $U^0=V^0(1-\beta_0)=V^0$, while for $k\ge 1$, in view of $\Phi^j(t_k)=0$ for $j>k$, one
can replace $\sum_{j=0}^M$ in \eqref{UV2} by $\sum_{j=0}^k$, so, indeed,
$$
U^k=\underbrace{\sum_{j=0}^{k-1} V^j(1-\beta_j)\underbrace{\Phi^j(t_k)}_{{}=\beta_k \Phi^j(t_{k-1})}}_{=\beta_k U^{k-1}}+V^k(1-\beta_k)
=\beta_kU^{k-1}+(1-\beta_k)V^k.
$$

Next,  \eqref{UV2} immediately implies that $\Pi^m U=\sum_{j=0}^M V^j(1-\beta_j)\Pi^m\Phi^j$ on $(0,t_m)$,
where $\Pi^m\Phi^j=0$  for $j>m$,
so
$$
\delta_t^m U^m
=D_t^\alpha (\Pi^m U)(t_m)
= \sum_{j=0}^m V^j(1-\beta_j)\,D_t^\alpha(\Pi^m\Phi^j)(t_m),
$$
which, compared with \eqref{UV}, immediately yields \eqref{kappa_basis}.
\end{proof}

It will be convenient to formulate sufficient conditions for \eqref{UV_kappa} in terms of the \underline{standard hat-function basis}
 $\{\phi^j(t_k)\}_{j=0}^M$
for  functions in $\R^{M+1}$ associated with the mesh $\{t_k\}_{k=0}^M$,
 i.e. $\phi^j(t_k)$ equals 1 if $k=j$ and $0$ otherwise
(see Fig.\,\ref{fig_basis} (right)).

\begin{lemma}\label{lem_sufficient}
Let the temporal mesh satisfy $\rho_j\ge\rho_{j+1}\ge1$ $\forall\,j\ge2$.
Then representation \eqref{UV} satisfies \eqref{UV_kappa} if
\begin{subequations}\label{key}
\begin{align}\label{key1}
\delta_t^\alpha \phi^{m-1}(t_m)+\beta_{m}\delta_t^\alpha\phi^{m}(t_m)&< 0\qquad\mbox{for~~}m\ge1,\\\label{key2}
\delta_t^\alpha \phi^{m-2}(t_m)+\beta_{m-1}\Bigl[\delta_t^\alpha \phi^{m-1}(t_m)+\beta_{m}\delta_t^\alpha\phi^{m}(t_m)\Bigr]&\le 0\qquad\mbox{for~~}m\ge2,
\end{align}
\end{subequations}
where  $\delta_t^\alpha \phi^k(t_m)$ $\forall\, k$ is understood as $D_t^\alpha (\Pi^m\phi^k)(t_m)$.
Under the above conditions we also have
\beq\label{kappa0}
-\kappa_{m,0}\gtrsim t_m^{-\alpha}\qquad\quad\mbox{for~~}m\ge3.
\eeq
\end{lemma}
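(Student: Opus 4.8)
The plan is to work from the representation \eqref{kappa_basis}, which expresses $\kappa_{m,j}/(1-\beta_j)$ as $D_t^\alpha(\Pi^m\Phi^j)(t_m)$, and to exploit the defining recursion $\Phi^j(t_k)=\beta_k\Phi^j(t_{k-1})$ for $k\ge j+1$ from \eqref{Phi_basis} to re-expand each $\Phi^j$ in the hat-function basis $\{\phi^k\}$. Concretely, $\Phi^j=\phi^j+\beta_{j+1}\phi^{j+1}+\beta_{j+1}\beta_{j+2}\phi^{j+2}+\cdots$, so by linearity of $D_t^\alpha\circ\Pi^m$ each $\kappa_{m,j}$ is a (finite, since $\Pi^m\phi^k=0$ for $k\ge m+1$ by the locality of quadratic interpolation) linear combination of the quantities $\delta_t^\alpha\phi^k(t_m)=D_t^\alpha(\Pi^m\phi^k)(t_m)$. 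The first two conditions in \eqref{key} are then exactly the statements that $\kappa_{m,m}/(1-\beta_m)>0$ and $\kappa_{m,m-1}/(1-\beta_{m-1})\le0$ once one observes a sign-flip: \eqref{key1} says $\delta_t^\alpha\phi^{m-1}(t_m)+\beta_m\delta_t^\alpha\phi^m(t_m)<0$, and since $\Phi^{m-1}=\phi^{m-1}+\beta_m\phi^m$ near $t_m$ (higher terms vanish under $\Pi^m$), this is $\kappa_{m,m-1}<0$; I will need to separately check that $\kappa_{m,m}=(1-\beta_m)\delta_t^\alpha\phi^m(t_m)>0$, which should follow from the structure of $\Pi^m$ (the last subinterval uses $\Pi_{2,m-1}$, so $\phi^m$ restricted there is the rising branch of a quadratic, and its Caputo derivative at $t_m$ is positive).

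The substantive part is to show $\kappa_{m,j}\le0$ for all $0\le j<m$ — not just for $j\in\{m-1,m-2\}$. For $j\le m-3$ one has $\kappa_{m,j}/(1-\beta_j)=\delta_t^\alpha\phi^j(t_m)+\beta_{j+1}\bigl[\delta_t^\alpha\phi^{j+1}(t_m)+\beta_{j+2}[\delta_t^\alpha\phi^{j+2}(t_m)+\cdots]\bigr]$, a nested bracket of the same shape as in \eqref{key2} but with many more terms. The key sign facts I would establish about the elementary quantities $\delta_t^\alpha\phi^k(t_m)$ are: these are explicit integrals $\frac1{\Gamma(1-\alpha)}\int_0^{t_m}(t_m-s)^{-\alpha}\partial_s(\Pi^m\phi^k)(s)\,ds$, and since $\Pi^m\phi^k$ is supported on the two or three mesh intervals touching $t_k$ (for $k<m-1$) and is a convex combination of a rising and a falling quadratic branch, $\partial_s(\Pi^m\phi^k)$ changes sign once, so each $\delta_t^\alpha\phi^k(t_m)$ is a difference of two positive terms. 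The point is that for $k\le m-3$ the weight $(t_m-s)^{-\alpha}$ is slowly varying across the support of $\phi^k$, which (together with the mesh condition $\rho_j\ge\rho_{j+1}\ge1$ controlling how the interval lengths change) forces $\delta_t^\alpha\phi^k(t_m)<0$ individually; then, since every $\beta_j\in[0,1)$, the whole nested bracket is a convex-type combination of negative quantities and stays negative. So the reduction from "all $j<m$" to the two conditions \eqref{key1}–\eqref{key2} rests on proving this monotone/negativity structure of $\delta_t^\alpha\phi^k(t_m)$ for $k$ bounded away from $m$, and showing that the recursive nesting propagates the sign.

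Finally, for the lower bound \eqref{kappa0}, I would take $j=0$ in the expansion: $-\kappa_{m,0}=-(1-\beta_0)\sum_{k\ge0}\bigl(\prod_{i=1}^{k}\beta_i\bigr)\delta_t^\alpha\phi^k(t_m)$ with $\beta_0=0$, so $-\kappa_{m,0}=-\sum_{k\ge0}\bigl(\prod_{i=1}^k\beta_i\bigr)\delta_t^\alpha\phi^k(t_m)$, and isolate the $k=0$ term, $-\delta_t^\alpha\phi^0(t_m)=D_t^\alpha(\Pi^m\phi^0)(t_m)$ with a minus sign. The function $\phi^0$ decreases from $1$ at $t_0=0$, so $D_t^\alpha(\Pi^m\phi^0)(t_m)=\frac1{\Gamma(1-\alpha)}\int_0^{t_m}(t_m-s)^{-\alpha}\partial_s(\Pi^m\phi^0)(s)\,ds$; since $\partial_s(\Pi^m\phi^0)$ is negative on the first interval(s) and $\phi^0$ drops a fixed O(1) amount there, this integral is $\sim -t_m^{-\alpha}$ up to the value of the $(t_m-s)^{-\alpha}$ weight near $s=0$, which is $\sim t_m^{-\alpha}$; hence $-\delta_t^\alpha\phi^0(t_m)\gtrsim t_m^{-\alpha}$. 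Combining this positive contribution with the already-established nonpositivity of the remaining terms $-\bigl(\prod_{i=1}^k\beta_i\bigr)\delta_t^\alpha\phi^k(t_m)\ge0$ for $k\ge1$ gives $-\kappa_{m,0}\gtrsim t_m^{-\alpha}$ for $m\ge3$ (the restriction $m\ge3$ ensuring that $\Pi^m$ on the first interval is genuinely the quadratic $\Pi_{2,1}$ rather than a boundary case). The main obstacle is the middle paragraph: pinning down the one-sign-change and size estimates for $\delta_t^\alpha\phi^k(t_m)$ uniformly in $k$ and $m$, using only $\rho_j\ge\rho_{j+1}\ge1$, and checking that the telescoping/nesting really does reduce everything to \eqref{key1}–\eqref{key2}.
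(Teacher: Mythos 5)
Your skeleton coincides with the paper's own proof: both use \eqref{kappa_basis} together with the recursion \eqref{Phi_basis} (equivalently \eqref{Phi_phi}) to expand $\Phi^j$ in hat functions, identify \eqref{key1} and \eqref{key2} with $\kappa_{m,m-1}<0$ and $\kappa_{m,m-2}\le 0$, and reduce the remaining cases $j\le m-3$ to the single-hat-function inequality $\delta_t^\alpha\phi^j(t_m)\le 0$ via the nested (inductive) structure. Two things are missing from your plan, though. First, you never address the zero row sum $\sum_{j=0}^m\kappa_{m,j}=0$, which is one of the three requirements in \eqref{UV_kappa}; the paper gets it immediately by noting that $V^j\equiv 1$ forces $U^j\equiv 1$ and that $\Pi^m$ reproduces constants, so $\delta_t^\alpha U^m=0$. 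Second, and more seriously, the crux $\delta_t^\alpha\phi^k(t_m)\le 0$ for $k\le m-3$ is left as a heuristic that is not correct as stated: for $k\ge 2$ the interpolant $\Pi^m\phi^k$ is \emph{negative} on $(t_{k-2},t_{k-1})$ (there $\Pi_{2,k-1}\phi^k$ is the quadratic fitting the data $0,0,1$), so $\partial_s(\Pi^m\phi^k)$ changes sign at least twice and your ``difference of two positive terms'' picture does not close. The paper does not reprove this fact either; it invokes \cite[Lemma~4]{ChenMS_JSC} after observing that for $j\le m-3$ the coefficients agree with those of the Alikhanov scheme with $\sigma=1$ --- and this citation is precisely where the hypothesis $\rho_j\ge\rho_{j+1}\ge 1$ is consumed. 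If you intend to prove it from scratch, you need a genuine argument here, not a slow-variation heuristic.

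Your treatment of \eqref{kappa0} contains a sign error. The flat expansion $-\kappa_{m,0}=-\sum_{k=0}^m\bigl(\prod_{i=1}^k\beta_i\bigr)\delta_t^\alpha\phi^k(t_m)$ only yields the claim if $\delta_t^\alpha\phi^k(t_m)\le 0$ for \emph{every} $1\le k\le m$, but this fails at $k=m$: one has $\delta_t^\alpha\phi^m(t_m)>0$ (up to a positive factor it is the quantity $\B_m$ of \eqref{AB_def}), and nonpositivity is not established for $k=m-2$ either --- indeed the whole point of introducing the $\beta_j$ and the conditions \eqref{key1}--\eqref{key2} is that the last few hat-function contributions are \emph{not} individually nonpositive. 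The repair is the grouping the paper uses: write $\Phi^0=\phi^0+\beta_1\Phi^1$, note that $D_t^\alpha(\Pi^m\Phi^1)(t_m)=\kappa_{m,1}/(1-\beta_1)\le 0$ is already known from the first part, and conclude $\delta_t^\alpha\Phi^0(t_m)\le\delta_t^\alpha\phi^0(t_m)\simeq -t_m^{-\alpha}$; your estimate of the $\phi^0$ term itself is fine. A final minor point: you do not need to verify $\kappa_{m,m}>0$ directly; once the row sum vanishes and $\kappa_{m,j}\le 0$ for all $j<m$ with $\kappa_{m,m-1}<0$ strict, positivity of the diagonal entry is automatic, which is how the paper avoids a separate computation of $\delta_t^\alpha\phi^m(t_m)$.
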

\begin{proof}
First, by \eqref{UV},
note that $V^j=1$ $\forall\,j$ implies that $U^j=1$ $\forall\,j$, which then, by \eqref{delta_t_def}, implies that $\delta_t^\alpha U^m=0$ $\forall\,m\ge1$,
so one gets $0=\sum_{j=0}^m\kappa_{m,j}\cdot 1$ $\forall\,m\ge1$, which immediately yields the second relation in \eqref{UV_kappa}.

Next, by \eqref{kappa_basis} combined with $1-\beta_j>0$ $\forall\,j$, we conclude that
$\kappa_{m,j}\le 0$  $\forall\, j<m$ is equivalent to $D_t^\alpha(\Pi^m\Phi^j)(t_m)\le 0$ $\forall\, j<m$.
To find sufficient conditions for the latter, note that \eqref{Phi_basis} implies that
\beq\label{Phi_phi}
\Phi^m(t_k)=\phi^m(t_k)\;\;\forall\,k\le m,\qquad
\Phi^j(t_k)=\phi^j(t_k)+\beta_{j+1}\Phi^{j+1}(t_k)\;\;\forall\,j,k\ge 0.
\eeq
In particular,  $\forall\,t_k\le t_m$ one has
$\Phi^{m-1}(t_k)=\phi^{m-1}(t_k)+\beta_{m}\phi^{m}(t_k)$
and
$\Phi^{m-2}(t_k)=\phi^{m-2}(t_k)+\beta_{m-1}\Phi^{m-1}(t_k)$,
so conditions \eqref{key1} and \eqref{key2}
are respectively equivalent to $D_t^\alpha(\Pi^m\Phi^{m-1})(t_m)< 0$ and $D_t^\alpha(\Pi^m\Phi^{m-2})(t_m)\le 0$.
Once the latter two inequalities hold true, an argument by induction shows that
for
$D_t^\alpha(\Pi^m\Phi^{j})(t_m)\le 0$ $\forall\, j\le m-3$
it suffices to check that $\delta_t^\alpha \phi^{j}(t_m)\le 0$ $\forall\, j\le m-3$.
The latter is true under the condition $\rho_j\ge\rho_{j+1}\ge1$ $\forall\,j\ge2$,
by \cite[Lemma~4]{ChenMS_JSC} (see also Remark~\ref{rem_JSC}).

To complete the proof of \eqref{UV_kappa}, note that one can replace $\kappa_{m,m}>0$ in \eqref{UV_kappa} by $\kappa_{m,m-1}<0$, the latter being satisfied
due to the strict inequality in \eqref{key1}.

For \eqref{kappa0}, let  $m\ge 3$
and
note that the above argument, in particular
the second relation in \eqref{Phi_phi} with $j=0$, implies
that $\delta_t^\alpha\Phi^{0}(t_m)\le\delta_t^\alpha\phi^{0}(t_m)\simeq -t_m^{-\alpha}$
(where we also used $\delta_t^\alpha\phi^{0}(t_m)=D_t^\alpha(\Pi^m\phi^{0})(t_m)\simeq -t_m^{-\alpha}$, which
can be shown on an arbitrary mesh from \eqref{delta_t_def}). Combining this bound with \eqref{kappa_basis} immediately yields
\eqref{kappa0}.
\end{proof}

\begin{remark}\label{rem_JSC}
In the statement of Lemma~\ref{lem_sufficient}, the assumption that $\rho_j\ge\rho_{j+1}\ge1$ $\forall\,j\ge2$
is only required for
$\delta_t^\alpha \phi^j(t_m)\le 0\;\;\forall j\le m-3$.
For the latter we use
\cite[Lemma~4]{ChenMS_JSC}, which is obtained for the Alikhanov scheme, but we rely on the fact that if, using the notation of \cite{ChenMS_JSC},
$\sigma=1$, then the coefficients $\kappa^*_{m,j}$ in the representation of type $\delta_t^\alpha U^m=\sum_{j=0}^m\kappa^*_{m,j}U^j$
are the same for the Alikhanov scheme and our scheme $\forall\,j\le m-3$, and, furthermore, $\kappa^*_{m,j}=\delta_t^\alpha \phi^j(t_m)$.
Note also that the above assumption on $\{\rho_j\}$
may be replaced by a weaker assumption; see \cite[(12), (16) and Remark~3]{ChenMS_JSC}.
\end{remark}

It is convenient to rewrite conditions \eqref{key} using the notation
\begin{subequations}\label{AB_def}
\begin{align}\label{AB_def_a}
\B_m&:= \ttau_{m}^{\alpha}\,\Gamma(1-\alpha)\,
2^{\alpha}\, \delta_t^\alpha \phi^m(t_m),\\
-\A_m&:= \ttau_{m}^{\alpha}\,\Gamma(1-\alpha)\,
2^{\alpha}\, \delta_t^\alpha \phi^{m-1}(t_m)&\hspace{-2cm}\mbox{for~~}m\ge1,\\
\F_m&:= \ttau_{m}^{\alpha}\,\Gamma(1-\alpha)\,
2^{\alpha}\, \delta_t^\alpha [\phi^{m-2}+\phi^{m-1}+\phi^{m}](t_m)
&\hspace{-0.5cm}\mbox{for~~}m\ge2,
\end{align}
\end{subequations}
where $\ttau_1:=\tau_1$ and $\ttau_m={\textstyle\frac12}(\tau_{m-1}+\tau_m)$ for $m\ge 2$ is from \eqref{grid_notation}.

\begin{corollary}\label{cor_suf_cond}
Let the temporal mesh satisfy $\rho_j\ge\rho_{j+1}\ge1$ $\forall\,j\ge2$.
Then 
representation \eqref{UV} satisfies \eqref{UV_kappa} if
\begin{subequations}\label{key_AB}
\begin{align}\label{key_AB_1}
\A_m-\beta_m\B_m&> 0\qquad\mbox{for~~}m\ge1,\\\label{key_AB_2}
(\A_m-\B_m+\F_m)-\beta_{m-1}(\A_m-\beta_m\B_m)&\le 0\qquad\mbox{for~~}m\ge2.
\end{align}
\end{subequations}
\end{corollary}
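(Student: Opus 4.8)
The plan is to reduce Corollary~\ref{cor_suf_cond} to Lemma~\ref{lem_sufficient} by a direct translation of conditions \eqref{key} into the scaled quantities \eqref{AB_def}. Since Lemma~\ref{lem_sufficient} already shows that \eqref{UV} satisfies \eqref{UV_kappa} under the mesh hypothesis $\rho_j\ge\rho_{j+1}\ge1$ whenever \eqref{key1}--\eqref{key2} hold, and the mesh hypothesis is carried over verbatim, it suffices to verify that \eqref{key_AB_1} implies \eqref{key1} and that \eqref{key_AB_1} together with \eqref{key_AB_2} implies \eqref{key2}.

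First I would note that the common prefactor $P_m:=\ttau_m^{\alpha}\,\Gamma(1-\alpha)\,2^{\alpha}$ appearing in \eqref{AB_def} is strictly positive for every $m\ge1$, because $\ttau_m>0$, $\Gamma(1-\alpha)>0$ for $\alpha\in(0,1)$, and $2^\alpha>0$. Hence multiplying either side of an inequality in \eqref{key} by $P_m$ preserves its sign. Applying this to \eqref{key1}: $P_m\bigl[\delta_t^\alpha\phi^{m-1}(t_m)+\beta_m\delta_t^\alpha\phi^{m}(t_m)\bigr] = -\A_m+\beta_m\B_m$ by the definitions \eqref{AB_def_a}, so \eqref{key1} is exactly $-\A_m+\beta_m\B_m<0$, i.e. \eqref{key_AB_1}. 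This handles $m\ge1$.

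Next I would treat \eqref{key2} for $m\ge2$. Write $\delta_t^\alpha\phi^{m-2}(t_m)=\delta_t^\alpha[\phi^{m-2}+\phi^{m-1}+\phi^m](t_m)-\delta_t^\alpha\phi^{m-1}(t_m)-\delta_t^\alpha\phi^m(t_m)$, multiply by $P_m$, and use \eqref{AB_def} to get $P_m\,\delta_t^\alpha\phi^{m-2}(t_m)=\F_m-(-\A_m)-\B_m=\A_m-\B_m+\F_m$. Likewise $P_m\,\beta_{m-1}\bigl[\delta_t^\alpha\phi^{m-1}(t_m)+\beta_m\delta_t^\alpha\phi^m(t_m)\bigr]=\beta_{m-1}\bigl(-\A_m+\beta_m\B_m\bigr)=-\beta_{m-1}(\A_m-\beta_m\B_m)$. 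Adding these two identities shows that $P_m$ times the left-hand side of \eqref{key2} equals $(\A_m-\B_m+\F_m)-\beta_{m-1}(\A_m-\beta_m\B_m)$, the left-hand side of \eqref{key_AB_2}; since $P_m>0$, \eqref{key2} and \eqref{key_AB_2} are equivalent. Combining the two translations, \eqref{key_AB} implies \eqref{key}, so Lemma~\ref{lem_sufficient} applies and gives \eqref{UV_kappa}, completing the proof.

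There is no real obstacle here: the corollary is purely a change of variables, and the only things to be careful about are the sign of the scaling factor $P_m$ (clear from $\alpha\in(0,1)$) and the bookkeeping of the linear combination $\phi^{m-2}=(\phi^{m-2}+\phi^{m-1}+\phi^m)-\phi^{m-1}-\phi^m$ together with the linearity of $\delta_t^\alpha\,\cdot\,(t_m)=D_t^\alpha(\Pi^m\,\cdot\,)(t_m)$ in its argument, which is immediate from \eqref{delta_t_def}. If anything is mildly delicate it is the boundary case $m=1$ versus $m\ge2$, but that is already reflected in the fact that \eqref{key1} and \eqref{key_AB_1} are stated for $m\ge1$ while \eqref{key2} and \eqref{key_AB_2} are stated for $m\ge2$, and in the convention $\ttau_1:=\tau_1$, so no extra work is needed. (The supplementary bound \eqref{kappa0} is not part of the corollary's statement, so nothing further is required.)
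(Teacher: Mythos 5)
Your proposal is correct and is exactly the argument the paper intends: the corollary is stated immediately after the remark ``It is convenient to rewrite conditions \eqref{key} using the notation \eqref{AB_def}'', and your verification that the positive prefactor $\ttau_m^{\alpha}\Gamma(1-\alpha)2^{\alpha}$ together with the linearity of $U\mapsto D_t^\alpha(\Pi^m U)(t_m)$ turns \eqref{key1}--\eqref{key2} into \eqref{key_AB_1}--\eqref{key_AB_2} is precisely that rewriting, after which Lemma~\ref{lem_sufficient} applies verbatim.
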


\begin{remark}
Combining \eqref{kappa_basis} with \eqref{Phi_phi} and \eqref{AB_def}, from the proof of Lemma~\ref{lem_sufficient}
one  gets
\beq\label{kappa_obersve}
\Gamma(1-\alpha)\,
2^{\alpha}\,\frac{\kappa_{m,m}}{1-\beta_m}=\ttau_{m}^{-\alpha}\,\B_m,
\quad
\frac{\kappa_{m,m}}{1-\beta_m}\cdot\frac{1-\beta_{m-1}}{|\kappa_{m,m-1}|}
=\frac{\B_m}{|\A_m-\beta_m\B_m|}\,.
\eeq
\end{remark}

\subsection{Uniform temporal mesh}\label{ssec_inv_mon_uniform}
We shall first estimate the quantities in \eqref{AB_def} and check the inverse-monotonicity conditions \eqref{key_AB}
for the case of uniform temporal meshes.

\begin{lemma}[Uniform temporal mesh]\label{lem_uniform_AB}
Let $\tau_j=\tau=TM^{-1}$ $\forall j\ge1$.
Then for the quantities in \eqref{AB_def} one has
\beq\label{AB_bounds_uniform}
\B_1=\A_1>0;\quad
\B_m=\B,\;\;\; \A_m=\A'-\A''_m\ge \nu \A',\;\;\; \F_m\le 1+\A''_m\;\;\forall\,m\ge2,
\eeq
where 
\beq\label{AB_def_uniform}
\B:=\frac{\alpha+2}{(1-\alpha)(2-\alpha)},\quad \A':=\frac{4\alpha}{(1-\alpha)(2-\alpha)},\quad 0\le \A''_m\le \frac{\alpha}{24},
\quad
\textstyle
\nu:=1-\frac1{48}(1-\alpha).
\eeq
\end{lemma}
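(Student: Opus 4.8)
The plan is to evaluate all three quantities $\B_m$, $\A_m$, $\F_m$ by directly applying the definitions in \eqref{AB_def} to the uniform mesh. On a uniform mesh $\ttau_m = \tau$ for all $m\ge 2$ and $\ttau_1=\tau_1=\tau$, so the prefactor $\ttau_m^\alpha\,\Gamma(1-\alpha)\,2^\alpha$ is the same for every $m$. Hence everything reduces to computing $D_t^\alpha(\Pi^m\phi^k)(t_m)$ for $k\in\{m-2,m-1,m\}$, which by \eqref{delta_t_def} is a finite sum of integrals of the form $\frac1{\Gamma(1-\alpha)}\int_{t_{i-1}}^{t_i}(t_m-s)^{-\alpha}\,\pt_s(\Pi^m\phi^k)(s)\,ds$ over the mesh intervals on which the relevant interpolant is nonzero. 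Since $\phi^k$ is supported near $t_k$, and $\Pi^m$ uses quadratic interpolation on $(t_{j-1},t_j)$ for $j<m$ and quadratic interpolation based on $\{t_{m-2},t_{m-1},t_m\}$ on the last interval $(t_{m-1},t_m)$, only two or three intervals contribute to each of these three quantities, namely $(t_{m-2},t_{m-1})$ and $(t_{m-1},t_m)$ (and for $\phi^{m-2}$ also possibly $(t_{m-3},t_{m-2})$, though the last-interval modification removes one of them). On each such interval $\pt_s(\Pi^m\phi^k)$ is an affine function of $s$ with explicitly known coefficients, so the integrals are all of the form $\int (t_m-s)^{-\alpha}(a+bs)\,ds$, computable in closed form and producing expressions in $\tau^{-\alpha}$ times rational functions of $\alpha$.

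The main structural observation to exploit is that the last-interval interpolant is built from $\{t_{m-2},t_{m-1},t_m\}$ rather than $\{t_{m-1},t_m,t_{m+1}\}$, which is exactly why $m$ and $m-1$ (but not $m-2$) are special: for $m\ge 2$ the intervals $(t_{m-2},t_{m-1})$ and $(t_{m-1},t_m)$ always carry quadratic pieces, while for $m\ge 3$ the interval $(t_{m-3},t_{m-2})$ also carries a standard quadratic piece of the $\Pi^m$-interpolation. First I would carry out the two "local" integrals over $(t_{m-2},t_{m-1})\cup(t_{m-1},t_m)$, where $(t_m-s)^{-\alpha}$ ranges over order-$\tau$ arguments, obtaining the "leading" rational constants; this is where $\B$, $\A'$, and the $1$ in the $\F_m$ bound come from. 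Then the remaining contributions to $\A_m$ and $\F_m$ — i.e. the part $\A''_m$ — come from the single interval $(t_{m-3},t_{m-2})$ (on which $\phi^{m-1}$, hence the $\A_m$-integrand, and also $\phi^{m-2}$, vanish once we account for the standard quadratic stencil there, so in fact $\A''_m$ collects only the "far tail" pieces of the $\phi^{m-1}$ and $\phi^{m-2}$ interpolants). Actually the cleaner bookkeeping is: write $\delta_t^\alpha\phi^k(t_m) = \delta_t^\alpha\phi^k(t_m)\big|_{\text{last two intervals}} + (\text{tail})$, identify the first term with the constant piece ($\B$, $-\A'$, or $1+\A'$ up to sign), and bound the tail — which involves $(t_m-s)^{-\alpha}$ integrated against second differences of $\phi^k$ over intervals at distance $\gtrsim\tau$ from $t_m$ — using the uniform mesh to get an explicit geometric-type bound $0\le\A''_m\le\alpha/24$. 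That the tail is nonnegative will follow from the sign of the relevant weights (second divided differences of a hat function have a fixed sign pattern and $(t_m-s)^{-\alpha}$ is positive and monotone), and the bound $\alpha/24$ will follow by estimating the worst case, which is $m=2$ or $m=3$ where the tail is shortest/largest; the factor $1-\alpha$ in $\nu=1-\frac1{48}(1-\alpha)$ then comes from $\A''_m/\A' \le (\alpha/24)\big/\big(\tfrac{4\alpha}{(1-\alpha)(2-\alpha)}\big) = \frac{(1-\alpha)(2-\alpha)}{96}\le\frac{1-\alpha}{48}$ since $2-\alpha\le 2$, giving $\A_m\ge\A'(1-\A''_m/\A')\ge\A'(1-\tfrac1{48}(1-\alpha))=\nu\A'$.

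The case $m=1$ is separate and easy: the interpolant is just the linear $\Pi_{1,1}$ on $(0,t_1)$, so $\phi^0$ and $\phi^1$ restricted there are the two linear hat pieces, $\pt_s(\Pi^1\phi^1) = 1/\tau$ and $\pt_s(\Pi^1\phi^0)=-1/\tau$ on $(0,t_1)$, and a one-line integration of $(t_1-s)^{-\alpha}/\tau$ over $(0,t_1)$ gives $\B_1=\A_1 = \frac{\tau^\alpha\Gamma(1-\alpha)2^\alpha}{\Gamma(1-\alpha)}\cdot\frac1\tau\cdot\frac{t_1^{1-\alpha}}{1-\alpha} = \frac{2^\alpha}{1-\alpha}>0$, which I would record (the positivity being all that \eqref{AB_bounds_uniform} asserts for $m=1$, though the explicit value is worth stating). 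I expect the main obstacle to be the bookkeeping in the tail estimate: correctly identifying which mesh intervals contribute to each of $\phi^{m-2}$, $\phi^{m-1}$, $\phi^m$ under the nonstandard last-interval stencil, keeping track of signs of the affine integrands on each interval, and then extracting a clean uniform bound $\alpha/24$ rather than an $m$-dependent or $\alpha$-messy one. A useful sanity check at the end is to verify that $\A_m-\B_m+\F_m = (\A'-\A''_m)-\B + (1+\A''_m+\text{(possible correction)}) $ collapses nicely — indeed one expects $\A'-\B+1 = \frac{4\alpha-(\alpha+2)+(1-\alpha)(2-\alpha)}{(1-\alpha)(2-\alpha)}$, and checking this reduces to a polynomial identity in $\alpha$ — and to confirm the $\A''_m$ terms cancel in $\A_m-\B_m+\F_m$ up to the controlled remainder, which is presumably why $\F_m$ is bounded by $1+\A''_m$ with the \emph{same} $\A''_m$; this cancellation is exactly what will later make \eqref{key_AB_2} checkable. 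I would then conclude by assembling \eqref{AB_bounds_uniform} from these pieces.
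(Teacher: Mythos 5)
Your proposal follows essentially the same route as the paper: compute the piecewise-quadratic interpolants of $\phi^{m-2},\phi^{m-1},\phi^m$ explicitly on the few intervals where they are nonzero, extract the exact constants $\B$ and $\A'$ from the contribution of $(t_{m-2},t_m)$ (the paper does this after rescaling to $\hat s=(s-t_{m-1})/\tau\in(-1,1)$), and bound the remaining ``tail'' $\A''_m$ by integration by parts together with the kernel bound $(1-\hat s)^{-\alpha-1}\le 2^{-\alpha-1}$; your algebra for $\nu$ is identical to the paper's. One bookkeeping slip worth correcting: the ``$1$'' in $\F_m\le 1+\A''_m$ does \emph{not} come from the last two intervals --- there $\Pi^m[\phi^{m-2}+\phi^{m-1}+\phi^m]\equiv 1$ (all three stencil values equal $1$), so that contribution vanishes; the $1$ arises from the interval $(t_{m-3},t_{m-2})$, where the interpolant of the sum climbs from $0$ to $1$ and the kernel satisfies $(1-\hat s)^{-\alpha}\le 2^{-\alpha}$, while the $\A''_m$-type piece comes from $(t_{m-4},t_{m-3})$. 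Relatedly, $\Pi^m\phi^{m-1}$ does not vanish on $(t_{m-3},t_{m-2})$ (it equals $\frac12(\hat s+1)(\hat s+2)<0$ there, which is precisely the source of $\A''_m$), and the last-interval stencil $\{t_{m-2},t_{m-1},t_m\}$ \emph{adds} $(t_{m-1},t_m)$ to the support of $\Pi^m\phi^{m-2}$ rather than removing an interval; these details come out correctly once the sum $\phi^{m-2}+\phi^{m-1}+\phi^m$ is interpolated as a whole, as in the paper.
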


\begin{proof}
For $m=1$, we have $\Pi^m\phi^{0}(s)=1-s/t_1$ and $\Pi^m\phi^{1}(s)=s/t_1$ on $(0,t_1)$ (as here $\Pi^m=\Pi_{1,1}$), so
$\delta_t^\alpha\phi^{1}(t_1)=-\delta_t^\alpha\phi^{0}(t_1)>0$, so
$\B_1=\A_1>0$.

Now let $m\ge 2$
and combine \eqref{AB_def} with \eqref{delta_t_def} and \eqref{CaputoEquiv}.
Rewriting the resulting integrals in terms of a new variable $\hat s:=(s-t_{m-1})/\tau$, so  the interval $(t_{m-2},t_m)$ is mapped to $(-1,1)$,
while $\ttau_m^\alpha(t_m-s)^{-\alpha}=(1-\hat s)^{-\alpha}$,
a calculation shows that
\begin{subequations}\label{AB_calculation}
\begin{align}
\B_m&= 2^{\alpha}\! \int_{-1}^1\!\! (\hat s+{\textstyle\frac12})(1-\hat s)^{-\alpha}\,d\hat s =\B,\label{AB_calculation_B}\\
\A_m&= 2^{\alpha}\! \int_{-1}^1\!\! 2\hat s(1-\hat s)^{-\alpha}\,d\hat s-\A''_m =\A'-\A''_m.\label{AB_calculation_A}
\end{align}
Here we used the observations that
$\Pi^m \phi^m(\hat s)$ is $\frac12\hat s(\hat s+1)$ on $(-1,1)$ and vanishes otherwise,
while
 $\Pi^m \phi^{m-1}(\hat s)$ is $1-\hat s^2$ on $(-1,1)$
and  vanishes for $\hat s>1$.
For $m=2$ one has $\A''_2=0$, while $\A''_m$ for $m>2$
corresponds to $\Pi^m \phi^{m-1}(\hat s)=\frac12(\hat s+1)(\hat s+2)<0$
on $(-2,-1)$, so,
using integration by parts on this interval, we arrive at
\beq\label{AB_calculation_A2}
\A''_m
:=
-\alpha 2^{\alpha}\! \int_{-2}^{-1}\!\! \underbrace{\Pi^m \phi^{m-1}(\hat s)}_{{}<0}\, 
\underbrace{(1-\hat s)^{-\alpha-1}}_{{}< 2^{-\alpha-1}}d\hat s
\le -\alpha 2^{-1}\!\int_{-2}^{-1}\!\! \Pi^m \phi^{m-1}(\hat s)\, d\hat s\le \frac{\alpha}{24}\,,
\eeq
\end{subequations}
in view of 
$\int_{-2}^{-1}\Pi^m \phi^{m-1}(\hat s)\,d\hat s=-\frac1{12}$.
Note also that $\A_m''/\A'\le\frac1{96}(1-\alpha)(2-\alpha)\le 1-\nu$, so we get another desired assertion $\A'-\A''_m\ge \nu \A'$.

As to $\F_m$, set $\chi^{m-2}:=\phi^{m-2}+\phi^{m-1}+\phi^{m}$ and
note that $\chi^{m-2}(t_j)$ is 0 for $j<m-2$ and 1 for $j\ge m-2$.
So for $m=2$ one has $\chi^{m-2}=1$ on $(0,t_m)$ so $\F_m=0$.
Otherwise
$\frac{d}{d\hat s}\Pi^m\chi^{m-2}(\hat s)$ has support on $(-2,-1)$ for $m=3$ and on $(-3,-1)$ for $m>3$,
so we split
$\F_m=\F'_m+\F''_m$ with $\F''_3=0$ and
$$
\F_m':=
2^{\alpha}\! \int_{-2}^{-1}\!\! \underbrace{{\textstyle \frac{d}{d\hat s}}\Pi^m\chi^{m-2}(\hat s)}_{{>0}} \,\underbrace{(1-\hat s)^{-\alpha}}_{{}\le 2^{-\alpha}}\,d\hat s
\le \int_{-2}^{-1} (\Pi^m\chi^{m-2})'(\hat s)\,d\hat s =1.
$$
For $m>3$, we also need to estimate $\F''_m$, which involves
$\Pi^m\chi^{m-2}(\hat s)=\frac12(\hat s+2)(\hat s+3)$ on $(-3,-2)$,
and is bounded similarly to
$\A_m''$ in \eqref{AB_calculation_A2},
which yields
 $0\le \F_m''\le \A_m''$. Hence, we get the final assertion $\F_m\le 1+\A''_m$.
\end{proof}

\begin{corollary}[Uniform temporal mesh]\label{cor_uni_suff}
Let $\tau_j=\tau=TM^{-1}$ $\forall j\ge1$ and, using the notation \eqref{AB_def_uniform}, set $\beta_j:=\beta:=\frac{\theta}2 \nu\A'/\B$ $\forall\,j\ge 1$
with any $\theta\in[\frac12
,1]$.
Then
$\beta\in(0,\frac23)$, and the operator $\delta_t^\alpha$ enjoys the inverse-monotone representation
\eqref{UV_ab}.
\end{corollary}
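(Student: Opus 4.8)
The plan is to verify the two hypotheses of Corollary~\ref{cor_suf_cond} for the uniform mesh, using the explicit bounds furnished by Lemma~\ref{lem_uniform_AB}, together with the elementary fact that a uniform mesh has $\rho_j=1$ for all $j$, so the mesh condition $\rho_j\ge\rho_{j+1}\ge1$ holds trivially. Thus the whole argument reduces to checking \eqref{key_AB_1} and \eqref{key_AB_2} with the prescribed constant choice $\beta_j=\beta=\frac\theta2\,\nu\A'/\B$.

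First I would handle the trivial case $m=1$: by \eqref{AB_bounds_uniform} one has $\B_1=\A_1>0$, so $\A_1-\beta\B_1=(1-\beta)\A_1>0$ as soon as $\beta<1$, which gives \eqref{key_AB_1} for $m=1$; condition \eqref{key_AB_2} is vacuous there. For $m\ge2$, I would first establish the claimed range $\beta\in(0,\tfrac23)$. Positivity is clear since $\nu,\A',\B,\theta>0$. For the upper bound, I would plug in the definitions from \eqref{AB_def_uniform}: $\nu\A'/\B=\bigl(1-\tfrac1{48}(1-\alpha)\bigr)\cdot\frac{4\alpha}{\alpha+2}$, and since $\theta\le1$, one has $\beta\le\frac12\cdot\frac{4\alpha}{\alpha+2}=\frac{2\alpha}{\alpha+2}$, which is increasing in $\alpha$ and at $\alpha=1$ equals $\frac23$; hence $\beta<\frac23$ for all $\alpha\in(0,1)$. (The factor $\nu<1$ only helps.)

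Next, for \eqref{key_AB_1} with $m\ge2$: using $\A_m\ge\nu\A'$ and $\B_m=\B$ from \eqref{AB_bounds_uniform}, I get $\A_m-\beta\B_m\ge\nu\A'-\beta\B=\nu\A'-\tfrac\theta2\nu\A'=\bigl(1-\tfrac\theta2\bigr)\nu\A'>0$ since $\theta\le1<2$, which settles \eqref{key_AB_1}. For \eqref{key_AB_2} with $m\ge2$, I would first bound the bracketed quantity: $\A_m-\B_m+\F_m=\A_m+\F_m-\B\le(\A'-\A''_m)+(1+\A''_m)-\B=\A'+1-\B$, where the $\A''_m$ terms cancel; a direct substitution of \eqref{AB_def_uniform} gives $\A'+1-\B=\frac{4\alpha}{(1-\alpha)(2-\alpha)}+1-\frac{\alpha+2}{(1-\alpha)(2-\alpha)}=1+\frac{3\alpha-2}{(1-\alpha)(2-\alpha)}$, so one should keep track of the sign — it may be negative for small $\alpha$. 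Then $\beta_{m-1}(\A_m-\beta_m\B_m)=\beta\,(\A_m-\beta\B)\ge\beta\bigl(1-\tfrac\theta2\bigr)\nu\A'$, and since $\A_m-\beta\B\le\A'$ as well we also have an upper bound, so \eqref{key_AB_2} amounts to showing $\A'+1-\B\le\beta\,(\A_m-\beta\B)$; here I would use $\theta\ge\frac12$ so that $\beta\ge\frac14\nu\A'/\B$, and estimate $\beta(\A_m-\beta\B)$ from below by plugging $\A_m-\beta\B\ge(1-\tfrac\theta2)\nu\A'\ge\frac12\nu\A'$ and $\beta\ge\frac14\nu\A'/\B$, obtaining $\beta(\A_m-\beta\B)\ge\frac18\nu^2{\A'}^2/\B$. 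It then remains to check the purely algebraic inequality $\A'+1-\B\le\frac18\nu^2{\A'}^2/\B$ in $\alpha\in(0,1)$, with $\nu=1-\tfrac1{48}(1-\alpha)\in[\tfrac{47}{48},1)$; since $\nu$ is bounded below by a constant, this is a one-variable inequality among rational functions of $\alpha$ that one verifies by clearing denominators (multiplying through by $(1-\alpha)(2-\alpha)>0$) and checking the resulting polynomial inequality on $(0,1)$.

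The main obstacle is this last algebraic verification of \eqref{key_AB_2}: unlike \eqref{key_AB_1}, which follows immediately from $\theta\le1$, condition \eqref{key_AB_2} genuinely uses the lower bound $\theta\ge\frac12$ and the precise constants, and one must confirm the polynomial inequality holds uniformly on $(0,1)$ (in particular near $\alpha\to1$, where $\B$, $\A'$ blow up like $(1-\alpha)^{-1}$ so both sides are comparable and the constants matter, and near $\alpha\to0$, where $\A'\to0$ so the right-hand side is small but the left-hand side $\A'+1-\B\to 1-1=0$ as well — the behaviour as $\alpha\to0$ therefore needs a careful expansion). I would carry out this check by reducing to a polynomial of low degree in $\alpha$ and either factoring it or bounding it crudely; the slack provided by $\nu\ge\tfrac{47}{48}$ and by the freedom in $\theta$ should be more than enough.
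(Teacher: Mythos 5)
Your overall strategy is the paper's: reduce everything to checking \eqref{key_AB_1}--\eqref{key_AB_2} via Corollary~\ref{cor_suf_cond} and the bounds of Lemma~\ref{lem_uniform_AB}, with \eqref{key_AB_1} and the range of $\beta$ handled exactly as in the paper. The gap is in your final step for \eqref{key_AB_2}. You lower-bound $\beta(\A_m-\beta\B)$ by minimizing the two $\theta$-dependent factors \emph{separately}: $\beta\ge\frac14\nu\A'/\B$ (attained at $\theta=\frac12$) and $\A_m-\beta\B\ge(1-\frac\theta2)\nu\A'\ge\frac12\nu\A'$ (attained at $\theta=1$), getting the constant $\frac18$. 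The product is actually $\frac{\theta}{2}\bigl(1-\frac{\theta}{2}\bigr)\nu^2\A'^2/\B=\frac{\theta(2-\theta)}{4}\,\nu^2\A'^2/\B$, whose minimum over $\theta\in[\frac12,1]$ is $\frac{3}{16}\nu^2\A'^2/\B$, not $\frac18\nu^2\A'^2/\B$. This loss is fatal: using the identity $\A'+1-\B=\frac{\alpha}{4}\,\A'$ (equivalently $4(\B/\A')(1-\B/\A'+1/\A')=\frac14(\alpha+2)$, which the paper computes), your target inequality $\A'+1-\B\le\frac18\nu^2\A'^2/\B$ is equivalent to $\alpha+2\le 2\nu^2$, which is false for \emph{every} $\alpha\in(0,1)$ since $\nu<1$. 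So the "purely algebraic verification" you defer to the end cannot succeed as set up — there is no slack left, contrary to your expectation.

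The fix is precisely what the paper does: keep the two occurrences of $\theta$ coupled, so that \eqref{key_AB_2} reduces to $\theta(2-\theta)>\frac14(\alpha+2)\nu^{-2}$ (see \eqref{strict_eta0}); since $\theta(2-\theta)\ge\frac34$ on $[\frac12,1]$, this follows from $\alpha+2<3\nu^2$, which does hold for all $\alpha\in(0,1)$ (writing $\epsilon=1-\alpha$, one has $3\nu^2-(\alpha+2)=\frac{7\epsilon}{8}+\frac{\epsilon^2}{768}>0$). With the constant $\frac{3}{16}$ in place of $\frac18$ your argument closes and is essentially identical to the paper's. Two minor remarks: your worry that $\A'+1-\B$ might be negative for small $\alpha$ is unfounded (it equals $\frac{\alpha^2}{(1-\alpha)(2-\alpha)}>0$), and the observation that the $\A''_m$ terms cancel in $\A_m-\B_m+\F_m\le\A'+1-\B$ is correct and matches the paper.
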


\begin{proof}
By \eqref{AB_def_uniform}, one has $\beta= {\theta}\nu\frac{2\alpha}{\alpha+2}\in(0,\frac 23)$
$\forall\,\alpha\in(0,1)$, $\forall\,\theta\in(0,1]$.

By Corollary~\ref{cor_suf_cond}, for \eqref{UV_ab} it suffices to check
conditions \eqref{key_AB}.
For $m\ge 1$ condition \eqref{key_AB_1} is straightforward in view of
$\B_1=\A_1>0$ from \eqref{AB_bounds_uniform}.
For $m\ge 2$, \eqref{AB_bounds_uniform} yields
$\B_m=\B$ and $\A_m-\B_m+\F_m\le \A'-\B+1$, while $\A_m\ge \nu \A'$
implies
$\A_m-\beta_m\B_m=\A_m-\frac{\theta}2 \nu\A'\ge (1-\frac{\theta}2)\, \nu\A'> 0$. So \eqref{key_AB_1} follows,
while for \eqref{key_AB_2} it suffices to show that
$$
\textstyle
 (\A'-\B+1) - \beta\,(1-\frac{\theta}2)\, \nu\A' <0.
$$
Recall that $\beta=\frac{\theta}2 \nu\A'/\B$, so multiplying the above inequality by $4\nu^{-2}\B/\A'^2$, one gets
\beq\label{strict_eta0}
\theta(2-\theta)> \underbrace{4\,(\B/\A')\,\bigl(1-\B/\A'+1/\A'\bigr)}_{=\frac14(\alpha+2)\mbox{~~by~\eqref{AB_def_uniform}}}{}\cdot\nu^{-2}.
\eeq
The latter, and hence \eqref{key_AB_2}, is satisfied if
$$\textstyle
|\theta-1|
< \sqrt{1-\frac14(\alpha+2)\,\nu^{-2}}
\;\;\;\Leftarrow\;\;\;
\theta\in (\theta_0(\alpha),1],
\quad\theta_0(\alpha):=1-\sqrt{1-\frac14(\alpha+2)\,\nu^{-2}}.
$$
Here
$\theta_0(\alpha)<\frac12$
follows from $\alpha+2<3\,\nu^{2}$  $\forall\,\alpha\in(0,1)$.
\end{proof}

\subsection{General temporal meshes}
Now we shall
estimate the quantities in \eqref{AB_def} and check the inverse-monotonicity conditions \eqref{key_AB}
for more general  meshes.

\begin{lemma}[General temporal mesh]\label{lem_gen_AB}
Suppose that 
{\color{blue}$\sigma_j\ge\sigma_{j+1}\ge0$ $\forall\,j\ge2$.}
Then for the quantities in \eqref{AB_def} one has
$\B_1=\A_1>0$ and
$\forall\,m\ge 2$
\beq\label{AB_bounds_graded}
\B_m=\B-\frac{\sigma_m}{2(1+\sigma_m)}\A'>{
\frac{2\B} 3},\quad\; \A_m=\frac{\A'}{1-\sigma_m^2}-\A''_m\ge \nu \frac{\A'}{1-\sigma_m^2},\quad\; \F_m\le 1+\A''_m,
\eeq
where we use the notation
\eqref{AB_def_uniform}  and
$\sigma_m\in[0,1)$  from \eqref{grid_notation}.
\end{lemma}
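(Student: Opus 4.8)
The plan is to mimic the proof of Lemma~\ref{lem_uniform_AB}, but now carrying the mesh non-uniformity through the change of variables. For $m\ge2$ I would combine \eqref{AB_def} with \eqref{delta_t_def} and \eqref{CaputoEquiv}, and introduce the affine substitution $\hat s:=(s-t_{m-1})/\tau_m$ on $(t_{m-1},t_m)$ and $\hat s:=(s-t_{m-1})/\tau_{m-1}$ on $(t_{m-2},t_{m-1})$, so that the quadratic interpolant's breakpoints $t_{m-2},t_{m-1},t_m$ are mapped to $-1,0,1$ respectively (this is where $\sigma_m$ enters: the ratio of the two subinterval lengths is encoded in $\sigma_m$ via $\rho_m=\tau_m/\tau_{m-1}=(1+\sigma_m)/(1-\sigma_m)$). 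Under this map $\Pi^m\phi^m$ becomes $\tfrac12\hat s(\hat s+1)$ on $(0,1)$; $\Pi^m\phi^{m-1}$ becomes a piecewise quadratic, equal to $1-\hat s^2$ on $(-1,1)$ when $m=2$ and picking up an extra negative tail on $(-2,-1)$ (rescaled by $\tau_{m-1}/\tau_{m-2}$, hence controlled by $\sigma_{m-1}$) for $m>2$; and the weight $\ttau_m^\alpha(t_m-s)^{-\alpha}$ becomes, after using $\ttau_m=\tfrac12(\tau_{m-1}+\tau_m)$ and on the interval $(0,1)$, a clean multiple of $(1-\hat s)^{-\alpha}$, whereas on $(-1,0)$ it carries the factor accounting for $\tau_{m-1}\ne\tau_m$. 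Evaluating the resulting elementary integrals gives $\B_m$ and $\A_m$ as stated.

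Concretely, for $\B_m$ I expect the integral to split as the "uniform" contribution $\B$ (from the part of $\Pi^m[\phi^{m-2}+\dots]$ that is symmetric) minus a correction that is linear in how much $(t_{m-2},t_{m-1})$ exceeds $(t_{m-1},t_m)$ in size, which after the algebra collapses to $\frac{\sigma_m}{2(1+\sigma_m)}\A'$; for $\A_m$, the change of variables on the unequal subintervals produces the factor $1/(1-\sigma_m^2)=\tfrac14(1+\rho_m)(1+\rho_m^{-1})$ multiplying $\A'$, and the negative-tail term becomes $\A_m''$, bounded exactly as in \eqref{AB_calculation_A2} except that the comparison $(1-\hat s)^{-\alpha-1}<2^{-\alpha-1}$ and the explicit value $\int_{-2}^{-1}\Pi^m\phi^{m-1}\,d\hat s=-\tfrac1{12}$ must be re-examined when the tail subinterval has a different length — here the monotonicity $\sigma_{j}\ge\sigma_{j+1}$ is what guarantees $\tau_{m-2}\ge$ (a suitable multiple of) $\tau_{m-1}$, keeping $\A_m''\le\alpha/24$ and $0\le\F_m''\le\A_m''$. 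The bounds $\B_m>\tfrac23\B$ and $\A_m\ge\nu\A'/(1-\sigma_m^2)$ then follow from $\sigma_m\in[0,1)$: for the first, $\frac{\sigma_m}{2(1+\sigma_m)}<\tfrac14$ and $\A'/\B=\frac{4\alpha}{\alpha+2}<\tfrac43$, so $\B_m>\B-\tfrac13\B$ after a short estimate; for the second, $\A_m''\big/\!\bigl(\A'/(1-\sigma_m^2)\bigr)\le(1-\sigma_m^2)\A_m''/\A'\le\A_m''/\A'\le 1-\nu$ as in the uniform case. The case $m=1$ is unchanged: $\Pi^1$ is linear, $\Pi^1\phi^0(s)=1-s/t_1$, $\Pi^1\phi^1(s)=s/t_1$, giving $\B_1=\A_1=\ttau_1^\alpha\Gamma(1-\alpha)2^\alpha D_t^\alpha(s/t_1)(t_1)>0$ directly.

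The main obstacle I anticipate is the bookkeeping for $\A_m''$ and $\F_m''$ on the tail interval $(t_{m-3},t_{m-1})$ (equivalently $(-2,-1)$ in scaled variables) when the mesh is genuinely graded: the piece $\Pi^m\phi^{m-1}$ restricted there is the quadratic extending the interpolant through $t_{m-2},t_{m-1}$, but its shape and the Jacobian of the substitution both depend on $\tau_{m-2}/\tau_{m-1}$, so the clean constants $-\tfrac1{12}$ and $2^{-\alpha-1}$ from the uniform proof must be replaced by $\sigma$-dependent quantities and then bounded uniformly. This is exactly where the hypothesis $\sigma_j\ge\sigma_{j+1}\ge0$ for $j\ge2$ does its work — it forces the mesh to be "locally quasi-uniform in the right direction" near $t_m$ so that the tail subinterval is not too short relative to the active ones — and I would isolate a short sublemma estimating $\int\Pi^m\phi^{m-1}$ and $\int(\Pi^m\chi^{m-2})'$ on the scaled tail, showing the uniform-mesh bounds are the extremal ones. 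Everything else is the same pattern of "integrate by parts on the negative-sign piece, bound the weight by its endpoint value, evaluate an explicit polynomial integral'' already used in Lemma~\ref{lem_uniform_AB}.
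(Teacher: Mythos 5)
Your overall plan is the paper's plan: redo the uniform-mesh computation of Lemma~\ref{lem_uniform_AB} with a change of variables that encodes the local mesh ratio through $\sigma_m$, evaluate the main parts of $\B_m$ and $\A_m$ exactly, and bound the tail contributions $\A''_m$, $\F''_m$ by the same constants as before using $\sigma_j\ge\sigma_{j+1}\ge0$ (so $\tau_{j-1}\le\tau_j$). The final inequalities ($\B_m>\frac23\B$ from $\frac{\sigma_m}{2(1+\sigma_m)}\le\frac14$ and $\A'/\B<\frac43$; $\A_m\ge\nu\A'/(1-\sigma_m^2)$ from $\A''_m/\A'\le1-\nu$) and your identification of where the mesh hypothesis is used are all correct and agree with the paper.

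The problem is the substitution itself, and it is not cosmetic. You map $(t_{m-2},t_{m-1})$ and $(t_{m-1},t_m)$ by \emph{two different} affine maps so that $t_{m-2},t_{m-1},t_m\mapsto-1,0,1$. Under that piecewise map the single quadratic $\Pi_{2,m-1}$ becomes a \emph{different} quadratic in $\hat s$ on each piece, and the weight is not what you claim: on $(0,1)$ one gets $\Pi^m\phi^m(\hat s)=\rho_m\hat s(\hat s+1/\rho_m)/(1+\rho_m)$, which equals $\frac12\hat s(\hat s+1)$ only when $\rho_m=1$, and on $(-1,0)$ the kernel transforms to a multiple of $(\rho_m-\hat s)^{-\alpha}$, not of $(1-\hat s)^{-\alpha}$. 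So the intermediate expressions you write down are those of the uniform case and would not reproduce the stated $\sigma_m$-corrections; the exact evaluations of $\B_m$ and of $\A'_m=\A'/(1-\sigma_m^2)$ --- which are the entire content of the lemma --- are asserted (``after the algebra collapses to\dots'') rather than derived, and they cannot be read off from your formulas. The paper instead uses the single affine map $\hat s:=(s-\frac12[t_{m-2}+t_m])/\ttau_m$, which sends $(t_{m-2},t_m)$ to $(-1,1)$ with $t_{m-1}\mapsto-\sigma_m$; this keeps $\ttau_m^\alpha(t_m-s)^{-\alpha}=(1-\hat s)^{-\alpha}$ exactly and keeps $\Pi^m$ a single quadratic, so the only changes from the uniform case are the explicit polynomials $\Pi^m\phi^m=\frac12\hat s(\hat s+1)+\frac12(1-\hat s^2)\sigma_m/(1+\sigma_m)$ and $\Pi^m\phi^{m-1}=(1-\hat s^2)/(1-\sigma_m^2)$, from which \eqref{AB_bounds_graded} follows by the same one-line integrals as in \eqref{AB_calculation_B}--\eqref{AB_calculation_A}. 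Your sublemma on the tail interval is in the right spirit (in the paper's coordinates the tail support shrinks to a subset of $(\sigma_m-2,-1)$, so $\int|\Pi^m\phi^{m-1}|\le\frac1{12}$ and the bound $\A''_m\le\alpha/24$ survives), but to make the main terms come out you need to either switch to the single-map parametrization or actually carry out the two-piece computation, which your proposal does not do.
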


\begin{proof}
We shall imitate the proof of Lemma~\ref{lem_uniform_AB} making  appropriate changes for $m\ge2$.
Rewrite all integrals  in terms of the  variable $\hat s:=(s-\frac12[t_{m-2}+t_m])/\ttau_m$, so  the interval $(t_{m-2},t_m)$ is mapped to $(-1,1)$,
but $s=t_{m-1}$ is now mapped to $\hat s=-\sigma_m$.

The evaluation of $\B_m$ is similar to \eqref{AB_calculation_B}, but now
(to ensure $\Pi^{m}\phi^m=0$ at $\hat s=-\sigma_m$)
one has $\Pi^{m}\phi^m(s)=\frac12\hat s(\hat s+1)+\frac12(1-\hat s^2)\sigma_m/(1+\sigma_m)$ on $(-1,1)$,
which yields the desired assertion for $\B_m$.

Next, similarly to \eqref{AB_calculation_A},  split $\A_m=\A'_m-\A''_m$,
where now $\Pi^{m}\phi^{m-1}(\hat s)=(1-\hat s^2)/(1-\sigma_m^2)$ on $(-1,1)$
(so that $\Pi^{m}\phi^{m-1}=1$ at $\hat s=-\sigma_m$), so we get a version of \eqref{AB_calculation_A}
with $\A'$ replaced by $\A'_m=\A'/(1-\sigma_m^2)$.
As to $\A_m''$ for $m>2$, it is estimated exactly as in \eqref{AB_calculation_A2}, only now the support of $\Pi^m \phi^{m-1}(\hat s)$ for $\hat s<-1$
is limited to a certain subset 
of $(\sigma_m-2,-1)$ (in view of $\tau_j\le \tau_{j+1}$ $\forall j\ge1$),
so $\int_{-2}^{-1}|\Pi^m \phi^{m-1}(\hat s)|d\hat s\le\frac1{12}$, which leads to the same upper bound for $\A''_m$ as in Lemma~\ref{lem_uniform_AB}.

The estimation of $\F_m$ remains as  the proof of Lemma~\ref{lem_uniform_AB};
in particular, we again enjoy $\F_m''\le \A_m''$ in view of 
{\color{blue}$\sigma_j\ge\sigma_{j+1}\ge0$ $\forall\,j\ge2$ (as the latter implies $\rho_j\ge\rho_{j+1}\ge1$).}

Finally,
 $\B_m>\frac23\B$ for  $m\ge 2$  follows from $\frac{\sigma_m}{2(1+\sigma_m)}\le\frac14$ $\forall\,\sigma_m\in[0,1)$
combined with the definitions of $\B$ and $\A'$ in \eqref{AB_def_uniform}.
\end{proof}

\begin{corollary}[General temporal mesh]\label{cor_inv_monot_general}
Let the temporal mesh satisfy $\sigma_j\ge\sigma_{j+1}\ge0$
$\forall\,j\ge2$, and for any $\theta\in[\frac12,1]$ set
\beq\label{def_eta}\textstyle
\eta(\sigma):=
(1-\sigma^2)\bigl[\B/\A'-\frac{\sigma}{2(1+\sigma)}\bigr],
\quad\;\;
\beta_1:=\beta_2,\quad\;\;
\beta_j:=\frac{\theta}2\nu/\eta(\sigma_j)\quad\forall\,j\ge2,
\eeq
where we use the notation
\eqref{AB_def_uniform}  and
$\sigma_j\in[0,1)$  from \eqref{grid_notation}.
Then $\beta_j\ge\beta_{j+1}>0$ $\forall\,j\ge1$.
Furthermore,
for any $\theta\in[\frac12,1]$
 there exists  $\bar \sigma=\bar \sigma(\alpha,\theta)\in(0,1)$ such that if $\sigma_j\in[0,\bar \sigma]$ $\forall j\ge 2$, then
$\beta_j\in(0,1)$  $\forall j\ge 1$
 and the operator $\delta_t^\alpha$ enjoys the inverse-monotone representation
\eqref{UV_ab}.
\end{corollary}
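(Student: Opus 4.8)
The plan is to verify the two inequalities of Corollary~\ref{cor_suf_cond}, namely \eqref{key_AB_1} and \eqref{key_AB_2}, using the estimates for $\A_m$, $\B_m$, $\F_m$ established in Lemma~\ref{lem_gen_AB}, and to exploit the monotonicity $\sigma_j\ge\sigma_{j+1}\ge0$ to control the cross terms involving $\beta_{m-1}$. The first step is the easy structural claims: that $\beta_j>0$ follows from $\eta(\sigma)>0$ on $[0,1)$ (which is immediate since $\B/\A'=\frac{\alpha+2}{4\alpha}>\frac14\ge\frac{\sigma}{2(1+\sigma)}$), and that $\beta_j\ge\beta_{j+1}$ reduces to $\eta(\sigma_j)\le\eta(\sigma_{j+1})$ for $j\ge2$, i.e.\ to $\eta$ being nonincreasing in $\sigma$ on $[0,1)$, combined with $\sigma_j\ge\sigma_{j+1}$; the boundary case $j=1$ is handled by the definition $\beta_1:=\beta_2$. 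Monotonicity of $\eta$ is a one-variable calculus check and I would just assert it (differentiate, or write $\eta(\sigma)=(1-\sigma^2)\B/\A'-\frac{\sigma(1-\sigma)}{2}$ and observe both summands are nonincreasing).

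Next I would establish \eqref{key_AB_1}: $\A_m-\beta_m\B_m>0$. For $m=1$ this is $\A_1-\beta_1\B_1=(1-\beta_1)\A_1>0$ once $\beta_1\in(0,1)$ (and $\A_1=\B_1>0$). For $m\ge2$, using $\A_m\ge\nu\A'/(1-\sigma_m^2)$ and $\B_m=\B-\frac{\sigma_m}{2(1+\sigma_m)}\A'$ from \eqref{AB_bounds_graded}, together with $\beta_m=\frac{\theta}2\nu/\eta(\sigma_m)$, one computes
\beq\label{plan_key1}
\A_m-\beta_m\B_m\ge\frac{\nu\A'}{1-\sigma_m^2}-\frac{\theta}2\nu/\eta(\sigma_m)\cdot\Bigl(\B-\tfrac{\sigma_m}{2(1+\sigma_m)}\A'\Bigr)
=\frac{\nu\A'}{1-\sigma_m^2}\Bigl(1-\tfrac{\theta}2\Bigr)>0,
\eeq
since by the definition of $\eta$ in \eqref{def_eta} the quantity $\B-\frac{\sigma_m}{2(1+\sigma_m)}\A'$ equals $\A'\eta(\sigma_m)/(1-\sigma_m^2)$ and $\theta\le1$. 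So \eqref{key_AB_1} holds on all meshes satisfying the ordering hypothesis, with no smallness restriction on $\sigma$.

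The remaining and harder step is \eqref{key_AB_2} for $m\ge2$: $(\A_m-\B_m+\F_m)-\beta_{m-1}(\A_m-\beta_m\B_m)\le0$. I would bound the first bracket from above using $\F_m\le1+\A_m''\le1+\frac{\alpha}{24}$, $\A_m\le\A'/(1-\sigma_m^2)$, and $\B_m\ge\frac23\B$, giving a quantity of the form $\A'/(1-\sigma_m^2)-\frac23\B+1+\frac\alpha{24}$; and I would bound $\beta_{m-1}(\A_m-\beta_m\B_m)$ from below using the lower bound \eqref{plan_key1} for the second factor together with $\beta_{m-1}\ge\beta_m$ (monotonicity, valid since $m-1\ge1$). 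Substituting $\beta_m=\frac\theta2\nu/\eta(\sigma_m)$ and collecting, it suffices to show an inequality of the form $\theta(2-\theta)>g(\sigma_m,\alpha)\nu^{-2}$, where $g(0,\alpha)=\frac14(\alpha+2)$ recovers exactly the uniform-mesh condition \eqref{strict_eta0}, and $g$ depends continuously on $\sigma_m$. The key point — this is the main obstacle — is that at $\sigma=0$ the inequality $\theta(2-\theta)>\frac14(\alpha+2)\nu^{-2}$ holds strictly for all $\theta\in[\frac12,1]$ and all $\alpha\in(0,1)$ (as shown in the proof of Corollary~\ref{cor_uni_suff}, using $\alpha+2<3\nu^2$), so by continuity there is a margin: choosing $\bar\sigma=\bar\sigma(\alpha,\theta)>0$ small enough that $g(\sigma,\alpha)\le g(0,\alpha)+$(half the margin) for all $\sigma\in[0,\bar\sigma]$ forces \eqref{key_AB_2}. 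One must also check $\beta_j<1$ on $[0,\bar\sigma]$: since $\beta_j=\frac\theta2\nu/\eta(\sigma_j)$ and $\eta(0)=\B/\A'=\frac{\alpha+2}{4\alpha}$ gives $\beta_j|_{\sigma=0}=\frac\theta2\nu\frac{4\alpha}{\alpha+2}=\frac\theta2\nu\A'/\B<\frac23$ (the uniform-mesh value from Corollary~\ref{cor_uni_suff}), shrinking $\bar\sigma$ further if necessary keeps $\beta_j$ bounded away from $1$ by continuity of $\eta$. With both \eqref{key_AB_1} and \eqref{key_AB_2} verified, Corollary~\ref{cor_suf_cond} delivers \eqref{UV_kappa} and hence, via Remark~\ref{rem_M_matrices}, the inverse-monotone representation \eqref{UV_ab}.
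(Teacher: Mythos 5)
Your overall architecture is the same as the paper's: positivity and monotonicity of $\{\beta_j\}$ from $\eta>0$ and $\eta$ decreasing, verification of \eqref{key_AB_1} via the exact identity $\B_m=\eta(\sigma_m)\A'/(1-\sigma_m^2)$ (your \eqref{plan_key1} is precisely the paper's computation), and then a continuity/perturbation argument off the uniform-mesh case \eqref{strict_eta0} to produce $\bar\sigma(\alpha,\theta)$. However, there is a concrete quantitative error in your treatment of \eqref{key_AB_2}. You bound $\A_m-\B_m+\F_m$ from above using $\B_m\ge\frac23\B$, $\A_m\le\A'/(1-\sigma_m^2)$ and $\F_m\le1+\frac{\alpha}{24}$, and then claim the resulting normalized condition has $g(0,\alpha)=\frac14(\alpha+2)$. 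It does not. With those bounds, the $\sigma_m=0$ condition becomes $\theta(2-\theta)\ge 4(\B/\A')\bigl[1-\frac23\B/\A'+(1+\frac{\alpha}{24})/\A'\bigr]\nu^{-2}$, which exceeds the correct value $\frac14(\alpha+2)\nu^{-2}$ by $\frac43(\B/\A')^2\nu^{-2}+\frac{\alpha\B}{6\A'^2}\nu^{-2}$. Since $\B/\A'=\frac{\alpha+2}{4\alpha}\ge\frac34$, the first extra term alone is $\ge\frac34$, so the left-hand side is $\ge\frac14(\alpha+2)+\frac34\ge\frac54>1\ge\theta(2-\theta)$: the condition is unsatisfiable for \emph{every} $\alpha\in(0,1)$ and $\theta\in[\frac12,1]$, even at $\sigma_m=0$, and the continuity argument has nothing to perturb from. (The $+\frac{\alpha}{24}$ term is also harmful on its own: after normalization it contributes $\frac{\alpha\B}{6\A'^2}\nu^{-2}\sim\frac{1}{24\alpha}$ as $\alpha\to0$, because $\frac14(\alpha+2)=4(\B/\A')(1-\B/\A'+1/\A')$ rests on a cancellation of terms of size $\alpha^{-2}$ that cannot tolerate lossy replacements.)

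The fix is exactly what the paper does, and what you already do in the \eqref{key_AB_1} step: keep $\B_m$ \emph{exact} via $\B_m=\eta(\sigma_m)\A'/(1-\sigma_m^2)$, so that $\A_m-\B_m+\F_m\le[1-\eta(\sigma_m)]\frac{\A'}{1-\sigma_m^2}+1$, where the $-\A_m''$ hidden in $\A_m$ absorbs the $+\A_m''$ in the bound $\F_m\le1+\A_m''$. Normalizing then gives \eqref{nonstrict_eta}, whose $\sigma_m=0$ limit is the strict inequality \eqref{strict_eta0}, and your margin/continuity argument (together with the check $\beta_m|_{\sigma=0}<\frac23$) correctly finishes the proof from there. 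One further small slip: your shortcut justification that $\eta$ is nonincreasing fails, since the second summand $-\frac{\sigma(1-\sigma)}{2}$ is increasing on $[\frac12,1)$; direct differentiation does work, as $\eta'(\sigma)=\sigma(1-2\B/\A')-\frac12<0$ because $\B/\A'\ge\frac34$.
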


\begin{proof}
Note that $\eta(\sigma)>0$ $\forall\,\sigma\in[0,1)$, in view of $\B_m>0$ $\forall\,\sigma_m\in[0,1)$ in \eqref{AB_bounds_graded}. Hence $\beta_j>0$ $\forall j\ge1$.
Also $\eta$
is a decreasing function of $\sigma$, so $\sigma_j\ge\sigma_{j+1}\ge0$
$\forall\,j\ge2$ implies $\beta_j\ge\beta_{j+1}>0$ $\forall\,j\ge1$.

Next, note that, by \eqref{grid_notation}, $\sigma_j\ge\sigma_{j+1}\ge0$ implies $\rho_j\ge\rho_{j+1}\ge1$ $\forall\,j\ge2$.
So, by Corollary~\ref{cor_suf_cond}, for \eqref{UV_ab} it suffices to check
conditions \eqref{key_AB}.
For $m\ge 1$ condition \eqref{key_AB_1} is straightforward in view of
$\B_1=\A_1>0$ (provided that $\beta_1=\beta_2<1$, which will be shown below).
For $m\ge 2$, \eqref{AB_bounds_graded} yields
$\B_m=\eta(\sigma_m)\frac{\A'}{1-\sigma_m^2}$, so
$\beta_m\B_m=\frac{\theta}2\nu \frac{\A'}{1-\sigma_m^2}$,  while $\A_m\ge \nu \frac{\A'}{1-\sigma_m^2}$
implies
$\A_m-\beta_m\B_m\ge (1-\frac{\theta}2)\, \nu\frac{\A'}{1-\sigma_m^2}> 0$,
so \eqref{key_AB_1} follows.
For \eqref{key_AB_2}
also using
$\A_m-\B_m+\F_m\le [1-\eta(\sigma_m)]\frac{\A'}{1-\sigma_m^2}+1$,
we conclude that it suffices to show that
$$\textstyle
  [1-\eta(\sigma_m)]\,\frac{\A'}{1-\sigma_m^2}+1 -  \underbrace{\beta_{m-1}}_{\ge \beta_m}\,(1-\frac{\theta}2)\, \nu\,\frac{\A'}{1-\sigma_m^2}\le 0.
$$
Dividing this by $\frac{\A'}{1-\sigma_m^2}$ and multiplying by $4\eta(\sigma_m)\,\nu^{-2}$, and also using $\beta_m=\frac{\theta}2\nu/\eta(\sigma_m)$,
we find that
\eqref{key_AB_2} is satisfied if
\beq\label{nonstrict_eta}
\theta(2-\theta)\ge 4\eta(\sigma_m) \Bigl(1-\eta(\sigma_m)+(1-\sigma_m^2)/\A'\Bigr)\cdot\nu^{-2}.
\eeq
Comparing this to \eqref{strict_eta0} and also noting that $\eta(0)=\B/\A'$, we see that if $\sigma_m=0$, then  a strict version of \eqref{nonstrict_eta}
becomes \eqref{strict_eta0},
so,
as was shown in the proof of Corollary~\ref{cor_uni_suff}, 
it is satisfied $\forall\,\theta\in[\frac12,1]$.
Also, if $\sigma_m=0$, then $\beta_m=\beta<\frac23$ (where $\beta$ is defined in Corollary~\ref{cor_uni_suff}).
Consequently, $\forall\,\theta\in[\frac12,1]$ there exists $\bar\sigma(\alpha,\theta)\in(0,1)$ such that both \eqref{nonstrict_eta} and $\beta_m<1$ are satisfied $\forall\,m\ge2$ if $\sigma_m\in[0,\bar\sigma]$
$\forall\,m\ge2$.
(The computation of $\bar\sigma(\alpha,\theta)$ is discussed in Remark~\ref{rem_bar_sigma} below.)
\end{proof}

\begin{remark}\label{rem_kappa_ratio}
Under the conditions of Corollary~\ref{cor_inv_monot_general},  $\forall\,m\ge 3$, one has
\begin{align*}
\beta_m\frac{\kappa_{m,m}}{1-\beta_m}\cdot\frac{1-\beta_{m-1}}{|\kappa_{m,m-1}|}
&=\frac{\beta_m\B_m}{\A_m-\beta_m\B_m}\le \frac{\theta}{2-\theta}\,,
\\[0.2cm]
\frac{\kappa_{m,m}}{1-\beta_m}\cdot\frac{1-\beta_{m-1}}{\kappa_{m-1,m-1}}
&=\frac{\ttau_{m}^{-\alpha}\,\B_m}{\ttau_{m-1}^{-\alpha}\,\B_{m-1}}\ge \frac{\ttau_{m-1}^\alpha}{\ttau_{m}^{\alpha}}\,,
\end{align*}
where we used \eqref{kappa_obersve} and the observations on $\beta_m\B_m$ and $\A_m-\beta_m\B_m$ made in the proof of Corollary~\ref{cor_inv_monot_general}.
For the second relation, we also relied on $\{\B_m\}_{m=2}^M$ being a decreasing function of $\sigma_m$, in view of \eqref{AB_bounds_graded}.%
\end{remark}%

\begin{remark}[Computation of $\bar\sigma$]\label{rem_bar_sigma}
Using the notation $\eta_m=\eta(\sigma_m)$, one can rewrite \eqref{nonstrict_eta}
as
\beq\label{bar_sigma_rel2}
4\eta_m \bigl(1+a-\eta_m\bigr)\le b,
\quad\mbox{where}\quad
a:=(1-\sigma_m^2)/\A'>0,\quad b:= \nu^2\theta(2-\theta)<1,
\eeq
which is equivalent to
\beq\label{bar_sigma_rel}
2\eta_m\ge \textstyle
(1+a)+\sqrt{(1+a)^2-b}>1.
\eeq
Importantly, this also ensures that $\beta_m<(2\eta_m)^{-1}<1$.
Note that the remaining solutions of the quadratic inequality in \eqref{bar_sigma_rel2} are described by
$$
2\eta_m\!\le
(1+a)-\sqrt{(1+a)^2-b}=
\frac{b}{(1+a)+\!\sqrt{(1+a)^2-b}}
<\frac{\theta\nu(2-\theta)}{1+\!\sqrt{1-\theta(2-\theta)}}=\theta\nu,
$$
which corresponds to $\theta\nu\beta_m^{-1}=2\eta_m<\theta\nu$ or $\beta_m>1$, so such solutions are of no interest.
Going back to \eqref{bar_sigma_rel}, in which we use the definitions of $\eta(\sigma)$ from \eqref{def_eta}
and $a$ from \eqref{bar_sigma_rel2},
we arrive at
$$
\textstyle
(1-\sigma_m^2)\bigl[(2\B-1)/\A'-\frac{\sigma_m}{(1+\sigma_m)}\bigr]\ge
1+\sqrt{(1+a)^2-b}.
$$
Consequently, we impose $\sigma_m\in[0,\bar\sigma] $, where $\bar\sigma\in(0,1)$
is the minimal solution of the equation (in which $b$ is from \eqref{bar_sigma_rel2})
$$
\textstyle
\underbrace{(1-\bar \sigma)\bigl[c(1+\bar\sigma)-{\bar\sigma}\bigr]}_{=:g_L(\bar\sigma)}=
\underbrace{1+\sqrt{(1+(1-\bar\sigma^2)/\A')^2-b}}_{=:g_R(\bar\sigma)}
\,,\quad
c:=\frac{2\B-1}{\A'}=\frac{2+5\alpha-\alpha^2}{4\alpha}>\frac32.
$$
{\color{blue}Recall that setting $\sigma=0$ yields a strict inequality $g_L(0)>g_R(0)$.}
Also note that
$g_L(\sigma)$ is a parabola with zeros at $1$ and $-1-\frac{1}{c-1}$, so it is decreasing for positive $\sigma$, while
$g_R(\sigma)$ is also decreasing, and $\color{blue}g_R(1)>0$. So for each fixed $\alpha$ and $\theta$,
 starting with $\bar\sigma^{[0]}:=0$, the  iterative procedure $g_L(\bar\sigma^{[q+1]})=g_R(\bar\sigma^{[q]})$
 will generate an increasing sequence $\bar\sigma^{[q]}\in(0,1)$ converging to $\bar\sigma$.
 Finally, note that $\theta=1$ will produce the least restrictive $\bar\sigma$
 {\color{blue}(as then $b$ takes its maximal value)}.
\end{remark}

\section{Stability properties for the discrete fractional-derivative operator}\label{sec_stab}
In this section we shall combine the inverse-monotonicity of the operator $\delta_t^\alpha$
established in \S\ref{sec_inv_monot} with the barrier-function stability analysis developed in \cite{NK_XM}
for quasi-graded temporal meshes.

\begin{theorem}[Discrete comparison principle]\label{theo_DCPr}
Let the temporal mesh satisfy $\sigma_j\ge\sigma_{j+1}\ge0$
$\forall\,j\ge2$.
There exists  $\bar \sigma=\bar \sigma(\alpha)\in(0,1)$ such that if, additionally, $\sigma_j\in[0,\bar \sigma]$ $\forall j\ge 2$,
then the following statements are true.

(i)
 If $U^0\ge 0$ and $\delta_t^\alpha U^m\ge 0$  $\forall\, m\ge1$, then $U^j\ge 0$ for $\forall\, j\ge0$.

(ii)
If
for a certain barrier function $\{B^j\}_{j=0}^M$ one has
$|U^0|\le B^0$ and $|\delta_t^\alpha U^m|\le \delta_t^\alpha B^m$ $\forall\, m\ge1$, then  $|U^j|\le B^j$  $\forall\, j\ge0$.

(iii)
If $U^0=0$, then
$\displaystyle|U^m|\lesssim \max_{j=1,\ldots,m}\bigl\{t_j^{\alpha}\, |\delta_t^\alpha U^j|\bigr\}$
$\forall\,m\ge 1$.
\end{theorem}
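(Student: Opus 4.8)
The plan is to deduce all three parts from the inverse-monotone matrix product representation \eqref{UV_ab} established in Corollary~\ref{cor_inv_monot_general}, exactly as in the companion analysis of \cite{NK_XM}. First I would invoke Corollary~\ref{cor_inv_monot_general}: choosing, say, $\theta=1$ and taking $\bar\sigma=\bar\sigma(\alpha,1)\in(0,1)$ as produced there, the hypotheses $\sigma_j\ge\sigma_{j+1}\ge0$ and $\sigma_j\in[0,\bar\sigma]$ guarantee that $\delta_t^\alpha$ admits the representation \eqref{UV} with $\beta_j\in[0,1)$ and coefficients satisfying \eqref{UV_kappa}. By Remark~\ref{rem_M_matrices}, the associated $(M+1)\times(M+1)$ matrix factors as $A_1A_2$ with $A_1,A_2$ M-matrices, hence $A_1A_2$ is inverse-monotone.

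For part~(i): writing $F^m:=\delta_t^\alpha U^m$ for $m\ge1$ and $F^0:=U^0$, the hypotheses say $\vec F\ge 0$ componentwise, so $\vec U=(A_1A_2)^{-1}\vec F\ge 0$ since $(A_1A_2)^{-1}$ has nonnegative entries. Part~(ii) is the standard barrier-function consequence of (i) applied to $B^j\pm U^j$: since $\delta_t^\alpha$ is linear, $\delta_t^\alpha(B^m\pm U^m)=\delta_t^\alpha B^m\pm\delta_t^\alpha U^m\ge 0$ by the assumed bound $|\delta_t^\alpha U^m|\le\delta_t^\alpha B^m$, and $(B\pm U)^0=B^0\pm U^0\ge 0$ because $|U^0|\le B^0$; hence $B^j\pm U^j\ge 0$, i.e. $|U^j|\le B^j$.

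Part~(iii) is where the real work lies: one must exhibit a concrete barrier function $\{B^j\}$ with $B^0=0$, $\delta_t^\alpha B^m\ge t_m^{-\alpha}$ for all $m\ge1$, and $B^m\lesssim 1$ uniformly, and then apply (ii) with $B^j$ replaced by $\bigl(\max_{k\le m}\{t_k^\alpha|\delta_t^\alpha U^k|\}\bigr)B^j$ (noting this max is nondecreasing in $m$, so it can be absorbed into the comparison at each level). The natural candidate, following \cite{NK_XM}, is a discretization of a function behaving like $t^\alpha$ near $t=0$ and constant afterwards, or more simply the solution of $\delta_t^\alpha B^m = t_m^{-\alpha}$, $B^0=0$; one needs the lower bound \eqref{kappa0}, $-\kappa_{m,0}\gtrsim t_m^{-\alpha}$, together with the row-sum property $\sum_j\kappa_{m,j}=0$ and the sign pattern of the $\kappa_{m,j}$, to control $B^m$ from above. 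I expect the main obstacle to be verifying the uniform upper bound $B^m\lesssim 1$ for the barrier on an \emph{arbitrary} quasi-graded mesh: this requires a careful summation-by-parts estimate exploiting $\kappa_{m,m}/(1-\beta_m)\sim\ttau_m^{-\alpha}\B_m$ from \eqref{kappa_obersve} and the monotonicity relations of Remark~\ref{rem_kappa_ratio} to compare consecutive levels, rather than the somewhat more transparent M-matrix arguments available for the L1 and Alikhanov schemes in \cite{NK_XM}. Once that barrier estimate is in place, combining it with part~(ii) immediately gives the stated pointwise bound $|U^m|\lesssim\max_{j=1,\ldots,m}\{t_j^\alpha|\delta_t^\alpha U^j|\}$.
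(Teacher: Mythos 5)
Parts (i) and (ii) of your proposal are correct and coincide with the paper's argument: the inverse-monotone factorization from Corollary~\ref{cor_inv_monot_general} (with, say, $\theta=1$) gives (i), and (ii) follows by linearity applied to $B^j\pm U^j$. The gap is in part (iii). You reduce it to exhibiting a barrier $\{B^j\}$ with $B^0=0$, $\delta_t^\alpha B^m\gtrsim t_m^{-\alpha}$ and $B^m\lesssim 1$, but you never construct one, and the two candidates you name do not work as stated. Taking $B$ to be the solution of $\delta_t^\alpha B^m=t_m^{-\alpha}$, $B^0=0$, is circular: proving $B^m\lesssim 1$ for that $B$ \emph{is} the assertion of (iii) for the right-hand side $t_m^{-\alpha}$. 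A ``function behaving like $t^\alpha$ near $t=0$'' fails for a different reason: $D_t^\alpha t^\alpha=\Gamma(1+\alpha)$ is bounded, so its discretization cannot dominate $t_m^{-\alpha}$ as $t_m\to 0$; a bounded barrier with $\delta_t^\alpha B^m\gtrsim t_m^{-\alpha}$ must in effect jump at the first node. The correct choice is $B^j=1-\phi^0(t_j)$, for which $\delta_t^\alpha B^m=-\delta_t^\alpha\phi^0(t_m)\simeq t_m^{-\alpha}$ on an arbitrary mesh --- precisely the fact the paper uses to establish \eqref{kappa0}. With that barrier, scaled by $C\max_{k\le m}\{t_k^\alpha|\delta_t^\alpha U^k|\}$ and applied on the initial segment $\{t_0,\dots,t_m\}$ (the scheme is lower triangular, so (ii) localizes), your strategy can be completed; as written, however, part (iii) is a plan rather than a proof.

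For comparison, the paper's own proof of (iii) is barrier-free: working with the intermediate variables $V^j$ of \eqref{UV_ab}, it bounds $|V^1|,|V^2|$ directly via $\kappa_{1,1}\simeq t_1^{-\alpha}$ and $\kappa_{2,2}\simeq t_2^{-\alpha}$, and for $m\ge 3$ locates an index $n$ where $|V^n|=\max_{j\le m}|V^j|$, then uses the row-sum identity $\sum_{j=1}^{n}\kappa_{n,j}=-\kappa_{n,0}\gtrsim t_n^{-\alpha}$ from \eqref{kappa0} together with the sign pattern \eqref{UV_kappa}; a second maximum-principle step of the same kind transfers the bound from $V$ to $U$. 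This direct argument avoids having to verify any uniform upper bound on a barrier and uses only the coefficient properties already established in \S\ref{sec_inv_monot}.
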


\begin{proof}
Let $\bar\sigma$ be from Corollary~\ref{cor_inv_monot_general}
(for any $\theta\in[\frac12,1]$, e.g., $\theta=1$).
Then
the operator $\delta_t^\alpha$ enjoys the inverse-monotone representation
\eqref{UV_ab}, which will play the crucial role in our proof.
\smallskip

(i)
For $\{V^j\}$ from \eqref{UV_ab}, one has
$V^0=U^0\ge 0$, so $\delta_t^\alpha U^m\ge 0$ $\forall\, m\ge1$ implies $V^j\ge 0$  $\forall\, j\ge0$,
from which we then conclude that $U^j\ge 0$ for $\forall\, j\ge0$.
(Alternatively, the proof may directly employ the inverse monotonicity of the matrix associated with $\delta_t^\alpha$; see Remark~\ref{rem_M_matrices}.)
\smallskip

(ii) As the operator $\delta_t^\alpha$ is linear, the result follows from part (i).
\smallskip

(iii) For $\{V^j\}$ from \eqref{UV_ab},
we claim that $|V^m|\lesssim \max_{j=1,\ldots,m}\bigl\{t_j^{\alpha}\, |\delta_t^\alpha U^j|\bigr\}$.
To show this, note that $V^0=U^0=0$, so
$|V^1|= \kappa_{1,1}^{-1}|\delta_t^\alpha U^1|$
and $|V^2|\le |V^1|+\kappa_{2,2}^{-1}|\delta_t^\alpha U^2|$,
where, by \eqref{kappa_obersve},\,\eqref{AB_bounds_graded}, $\kappa_{1,1}\simeq t_1^{-\alpha}$ and $\kappa_{2,2}\simeq \ttau_2^{-\alpha}\simeq
t_2^{-\alpha}$, so for $m=1,2$ the desired bound on $|V^m|$ follows.
If $|V^n|=\max_{j\le m} |V^j|$ for some $3\le n\le m$, then
$\sum_{j=1}^n\kappa_{n,j} |V^n|\le |\delta_t^\alpha U^n|$,
where $\sum_{j=1}^n\kappa_{n,j}=-\kappa_{n,0}\gtrsim t_n^{-\alpha}$, in view of \eqref{kappa0},
so again
$ |V^m|\le|V^n|\lesssim t_n^{\alpha}|\delta_t^\alpha U^n|\le \max_{j\le m}\bigl\{t_j^{\alpha}\, |\delta_t^\alpha U^j|\bigr\}$.

Next, a similar argument shows that if
$\max_{j\le m}|U^j|= |U^k|$ for some $k\le m$, then  $ |U^k|\le |V^k|$.
Consequently, $|U^m|\le|U^k|\lesssim \max_{j=1,\ldots,k}\bigl\{t_j^{\alpha}\, |\delta_t^\alpha U^j|\bigr\}$.
\end{proof}

\begin{theorem}[Quasi-graded temporal grid]\label{theo_main_stab_XM}
Given $\gamma\in\R$,
let  the temporal mesh 
satisfy
\beq\label{t_grid_gen_XM}
\tau _1\simeq M^{-r},\qquad 
\tau_j
\simeq t_j/j,
\qquad 
t_j \sim \tau_1 j^r
\qquad
\forall\,j=1,\ldots,M
\eeq
for some $1\le r\le (3-\alpha)/\alpha$
if $\gamma>\alpha-1$ or for some $r\ge1$ if $\gamma\le \alpha-1$.
Additionally, 
let the temporal mesh satisfy $\sigma_j\ge\sigma_{j+1}\ge0$
$\forall\,j\ge2$
and
$\sigma_j\in[0,\bar \sigma]$ $\forall j\ge K+1$,
where
  $\bar \sigma\in(0,1)$ is from
Theorem~\ref{theo_DCPr}, and $1\le K\lesssim 1$
(i.e. $K$ is sufficiently large, but independent of $M$).
Then 
for $\{U^j\}_{j=0}^M$
 one has
\beq\label{main_stab_XM}
\left.\!\!\!\!\begin{array}{c}
|\delta_t^\alpha U^j|\lesssim (\tau_1/ t_j)^{\gamma+1}
\\[0.2cm]
\forall j\ge1,\;\;\; U^0=0
\end{array}\hspace{-0.15cm}\right\}
 \Rightarrow
|U^j|\lesssim
\U^j(\tau_1;\gamma)
:=
\tau_1 t_j^{\alpha-1}\!\left\{\!\!\begin{array}{ll}
1&\!\!\!\mbox{if~}\gamma>0\\
1+\ln(t_j/\tau_1)&\!\!\!\mbox{if~}\gamma=0\\
(\tau_1/t_j)^\gamma
&\!\!\!\mbox{if~}\gamma<0
\end{array}\right.
\hspace{-0.3cm}
%
%
\eeq
~\hfill $\forall j\ge 1$.
\end{theorem}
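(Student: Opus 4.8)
The plan is to deduce Theorem~\ref{theo_main_stab_XM} from the discrete comparison principle (Theorem~\ref{theo_DCPr}) by constructing an explicit barrier function $\{B^j\}_{j=0}^M$ with $B^0=0$, $B^j\simeq\U^j(\tau_1;\gamma)$, and $\delta_t^\alpha B^m\gtrsim(\tau_1/t_m)^{\gamma+1}$ for all $m\ge1$; then part~(ii) of Theorem~\ref{theo_DCPr} applied to a suitable rescaling of $B^j$ gives the result. The natural candidate is $B^j = C\,\U^j(\tau_1;\gamma)$ (a discrete analogue of the continuous barrier $\tau_1 t^{\alpha-1}$, $\tau_1 t^{\alpha-1}(1+\ln(t/\tau_1))$, or $\tau_1^{1-\gamma}t^{\alpha-1+\gamma}$ depending on the sign of $\gamma$), and the crux is the lower bound $\delta_t^\alpha B^m\gtrsim (\tau_1/t_m)^{\gamma+1}$. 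This is precisely the type of estimate established in \cite{NK_XM} for the L1 and Alikhanov schemes, so I would follow that framework, checking that the structural ingredients it needs are available here.

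The key steps, in order. First, reduce the $\gamma$-dependent statement to three cases and in each case identify the continuous barrier $B(t)$ whose nodal values are $B^j$; note $B(t)$ satisfies $D_t^\alpha B(t)\gtrsim (\tau_1/t)^{\gamma+1}$ for $t\ge\tau_1$ by a standard fractional-calculus computation (using $D_t^\alpha t^{\mu}\simeq t^{\mu-\alpha}$ and a logarithmic correction in the $\gamma=0$ case). Second, control the consistency error between the discrete operator and the continuous one: $|\delta_t^\alpha B^m - D_t^\alpha B(t_m)|$ must be shown to be a lower-order perturbation, i.e.\ bounded by (a small constant times) $(\tau_1/t_m)^{\gamma+1}$. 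For $m\ge3$ this uses the interpolation-error estimate for the quadratic Lagrange interpolant $\Pi^m$ on the mesh \eqref{t_grid_gen_XM} together with the bound $|B''(t)|$, $|B'''(t)|$ (and here the mesh conditions $\tau_j\simeq t_j/j$, $t_j\simeq\tau_1 j^r$, and the constraint $r\le(3-\alpha)/\alpha$ when $\gamma>\alpha-1$ are exactly what make the error absorbable); for the small-index range $m\le K$ one argues directly from \eqref{delta_t_def}, exploiting $1\le K\lesssim1$ so that these finitely many indices contribute only an $O(1)$ factor and do not require the $\sigma$-monotonicity there. Third, combine: for a large enough constant $C$, $\delta_t^\alpha B^m\ge \frac12 D_t^\alpha B(t_m)\gtrsim (\tau_1/t_m)^{\gamma+1}\ge|\delta_t^\alpha U^m|$ for all $m\ge1$, while $B^0=0\ge|U^0|$; then Theorem~\ref{theo_DCPr}(ii) yields $|U^j|\le B^j\simeq\U^j(\tau_1;\gamma)$.

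I expect the main obstacle to be the consistency estimate $|\delta_t^\alpha B^m-D_t^\alpha B(t_m)|\lesssim\epsilon\,(\tau_1/t_m)^{\gamma+1}$, and in particular making it hold \emph{uniformly down to the small-time regime} where $B(t)$ is least regular (its derivatives blow up like $t^{\alpha-1-k}$) and where the mesh is graded most aggressively. The delicate point is that the quadratic interpolation error on the first few intervals is governed by $\tau_1^{3}\sup|B'''|$-type quantities which, with $B'''\sim\tau_1 t^{\alpha-4}$ near $t=\tau_1$, is only marginally controllable — this is why $B^j$ is chosen with the factor $\tau_1$ (rather than $1$) out front, matching the scaling of the data bound, and why the restriction $r\le(3-\alpha)/\alpha$ appears for $\gamma>\alpha-1$. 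A secondary technical nuisance is that Theorem~\ref{theo_DCPr} requires $\sigma_j\in[0,\bar\sigma]$ only for $j\ge K+1$, so near $t=0$ one cannot invoke inverse-monotonicity directly; there the argument must instead treat $U^1,\dots,U^K$ by the explicit triangular structure of $\delta_t^\alpha$ (as in the proof of Theorem~\ref{theo_DCPr}(iii)), absorbing these into the generic constant since $K\lesssim1$. Apart from these points the proof is a routine adaptation of \cite{NK_XM}, the new scheme entering only through the already-established stability in Theorem~\ref{theo_DCPr} and through the standard truncation-error bounds for the operator $\delta_t^\alpha$ of \eqref{delta_t_def}.
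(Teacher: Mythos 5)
Your overall skeleton for the main case --- a discrete barrier $B^j\simeq\U^j(\tau_1;\gamma)$ with $B^0=0$ fed into Theorem~\ref{theo_DCPr}(ii), plus a separate elementary treatment of the first $K$ indices exploiting $K\lesssim1$ --- is exactly the route the paper takes, which likewise imports the barrier from \cite{NK_XM}. But the specific mechanism you propose for the crux, namely verifying $\delta_t^\alpha B^m\gtrsim(\tau_1/t_m)^{\gamma+1}$ by a consistency comparison with $D_t^\alpha B(t_m)$, does not work. For every $\gamma>\alpha-1$ the continuous barrier is a \emph{negative} power of $t$ (exponent $\alpha-1$ if $\gamma\ge0$, and $\alpha-1-\gamma\in(\alpha-1,0)$ if $\gamma\in(\alpha-1,0)$), so the Caputo integral $\int_0^t(t-s)^{-\alpha}\pt_sB(s)\,ds$ diverges at $s=0$; the identity $D_t^\alpha t^\mu\simeq t^{\mu-\alpha}$ you invoke is valid only for $\mu>0$ (and for the pure power $t^{\alpha-1}$ the Riemann--Liouville derivative is exactly $0$, not $\simeq t^{-1}$). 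The positivity of $\delta_t^\alpha B^m$ is an intrinsically discrete effect generated by the finite value $B^0=0$ on the first cell, where the interpolant is bounded rather than singular, and it has to be established by a direct mesh-level computation --- this is the actual content of the barrier lemmas in \cite{NK_XM} that the paper cites, not a truncation-error perturbation of a continuous inequality.

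The second genuine gap is the branch $\gamma\le\alpha-1$, for which the theorem permits \emph{arbitrary} $r\ge1$. Your barrier-plus-consistency plan imposes $r\le(3-\alpha)/\alpha$ throughout and in any case degenerates as $\gamma\downarrow\alpha-1$; you never say how this branch is handled. The paper disposes of it with Theorem~\ref{theo_DCPr}(iii): since $t^\alpha(\tau_1/t)^{\gamma+1}$ is nondecreasing in $t$ precisely when $\gamma\le\alpha-1$, one gets $|U^m|\lesssim\max_{j\le m}\{t_j^\alpha|\delta_t^\alpha U^j|\}\lesssim t_m^\alpha(\tau_1/t_m)^{\gamma+1}=\U^m(\tau_1;\gamma)$ with no restriction on the grading and no barrier at all. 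Finally, your treatment of $K>1$ is in the right spirit but underestimates the nonlocality of $\delta_t^\alpha$: after bounding $U^1,\dots,U^K$ directly, one cannot simply ``absorb'' them, because they contaminate every later equation; the paper must zero them out, estimate the resulting perturbation $D_t^\alpha\Pi^m[U-\mathring U](t_m)\lesssim(\tau_1/t_m)^{\alpha+1}$ by integration by parts, and then apply the $K=1$ result to a shifted operator built on $\{t_j\}_{j\ge K-1}$.
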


\begin{proof}
(i) First, consider the case $K=1$.
If  $1\le r\le (3-\alpha)/\alpha$,
note that mesh assumptions \eqref{t_grid_gen_XM} are equivalent to those in \cite[(2.1)]{NK_XM},
so the desired assertion is obtained by an application of Theorem~\ref{theo_DCPr}(ii)
with the barrier function $\{B^j\}$ from \cite[proofs of Theorems~2.1(i) and~4.2(i)]{NK_XM}.
If  $\gamma\le \alpha-1$, then \eqref{main_stab_XM} can be shown
(without assuming \eqref{t_grid_gen_XM})
by an application of Theorem~\ref{theo_DCPr}(iii)
imitating the proof of \cite[Theorem~2.1(ii)]{NK_XM}.

(ii)
Next, consider the case $K>1$.
As $K\lesssim 1$, by \eqref{t_grid_gen_XM}, one has $\tau_j\simeq \tau_1$ $\forall j\le K$.
So for $m\le K$ a calculation yields
$|U^m|\lesssim \sum_{j=0}^{m-1}|U^j|+\tau_1^\alpha|\delta_t^\alpha U^m|$
(in particular, $\kappa_{m,m}\simeq \tau_1^{-\alpha}$ follows from \eqref{kappa_obersve},\,\eqref{AB_bounds_graded}).
As
$|\delta_t^\alpha U^m|\lesssim 1$, so one gets
$|U^m|\lesssim \tau_1^\alpha\simeq \U^m$ $\forall\,m\le K$.

It remains to estimate the values of $\{\mathring{U}^j\}_{j=0}^M:=\{0,\ldots,0,U^{K+1},\ldots,U^M\}$
(i.e. $\mathring{U}^j$ is set to $0$ for $j\le K$ and to $U^j$ otherwise).
Note that $\delta_t^\alpha \mathring{U}^m=0$ for $m\le K$ and $|\delta_t^\alpha \mathring{U}^m|\lesssim 1$ for $m= K+1,K+2$.
Consider $m> K+2$. 
By \eqref{delta_t_def}, one has
$\delta_t^\alpha \mathring{U}^m=\delta_t^\alpha {U}^m-D^\alpha_t \Pi^m[U-\mathring{U}](t_m)$.
As $\Pi^m[U-\mathring{U}]$ has support on $(0, t_{K+1})$, vanishes at $0$ and $ t_{K+1}$, while its absolute value $\lesssim \tau_1^\alpha$,
so, recalling \eqref{CaputoEquiv} and applying an integration by parts yields
$|D^\alpha_t \Pi^m[U-\mathring{U}](t_m)|\lesssim \tau_1^\alpha\int_0^{ t_{K+1}}(t_m-s)^{-\alpha-1}ds\lesssim (\tau_1/t_m)^{\alpha+1}$ (where we also used $ t_{K+1}\simeq \tau_1$).
Consequently, for $m\ge K+1$ one concludes that $|\delta_t^\alpha \mathring{U}^m|$ is $\lesssim (\tau_1/t_m)^{\gamma+1}$ if $\gamma\le\alpha$
and $\lesssim (\tau_1/t_m)^{\alpha+1}$ otherwise.

Finally, let $\mathring{\delta}_t^\alpha$ be the operator of type $\delta_t^\alpha$, but associated with the mesh $\{t_j\}_{j=K-1}^M$,
i.e.
for any $\{W^j\}_{j=K-1}^M$, set
$\mathring{\delta}_t^\alpha W^K:=\int_{t_{K-1}}^{t_K}(\Pi_{1,K}W)(t_K-s)^{-\alpha}ds$
and $\mathring{\delta}_t^\alpha W^m:=\int_{t_{K-1}}^{t_m}(\Pi^mW)(t_m-s)^{-\alpha}ds$ for $m>K$.
Then $\mathring{\delta}_t^\alpha \mathring{U}^K=0$, while
$|\mathring{\delta}_t^\alpha \mathring{U}^m|=|{\delta}_t^\alpha \mathring{U}^m|$ for $m>K$.
Importantly, the bound of type \eqref{main_stab_XM}, which we already proved for ${\delta}_t^\alpha$ for the case $K=1$, applies to $\mathring{\delta}_t^\alpha$.
In the latter bound, $j\ge K$ and $t_j$ is replaced by $t_j-t_{K-1}\simeq t_j$.
In particular, we conclude that if $\gamma\le\alpha$, then $|\mathring{U}^j|\lesssim \U^j(\tau_1,\gamma)$, while if
$\gamma>\alpha$, then $|\mathring{U}^j|\lesssim \U^j(\tau_1,\alpha)=\U^j(\tau_1,\gamma)$.
Combining our findings, one gets $|U^j|=|\mathring{U}^j|\lesssim \U^j(\tau_1,\gamma)$ $\forall\,j\ge K+1$, and hence \eqref{main_stab_XM} $\forall\,j\ge 1$.
\end{proof}

\begin{corollary}[Graded temporal grid]\label{cor_main_stab_XM}
Given $\gamma\in\R$,
let  the temporal mesh be defined by
$\{t_j=T(j/M)^r\}_{j=0}^M$ for some $1\le r\le (3-\alpha)/\alpha$
if $\gamma>\alpha-1$ or for some $r\ge1$ if $\gamma\le \alpha-1$.
Then  the conditions of Theorem~\ref{theo_main_stab_XM} on the mesh are satisfied, and so
\eqref{main_stab_XM} holds true 
for any $\{U^j\}_{j=0}^M$ with
 $U^0=0$.
\end{corollary}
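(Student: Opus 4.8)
The plan is to verify that the specific graded mesh $t_j = T(j/M)^r$ satisfies every hypothesis of Theorem~\ref{theo_main_stab_XM}, after which the conclusion \eqref{main_stab_XM} is immediate. There are really two families of hypotheses to check: the \emph{quasi-gradedness} conditions \eqref{t_grid_gen_XM}, and the \emph{monotonicity} conditions $\sigma_j \ge \sigma_{j+1} \ge 0$ for $j \ge 2$ together with $\sigma_j \le \bar\sigma$ for $j$ beyond some fixed index $K$.

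First I would check \eqref{t_grid_gen_XM}. With $t_j = T(j/M)^r$ one has $\tau_1 = T M^{-r} \simeq M^{-r}$ directly, and $t_j \sim \tau_1 j^r$ holds with equality. For $\tau_j \simeq t_j/j$, write $\tau_j = T M^{-r}\bigl(j^r - (j-1)^r\bigr)$ and use the elementary two-sided bound $j^r - (j-1)^r \simeq j^{r-1}$ for $r \ge 1$ and $j \ge 1$ (which follows from the mean value theorem applied to $x \mapsto x^r$, since $r x^{r-1}$ is monotone), giving $\tau_j \simeq M^{-r} j^{r-1} \simeq t_j/j$. The constraint on $r$ ($1 \le r \le (3-\alpha)/\alpha$ or $r \ge 1$ depending on the sign of $\gamma - (\alpha-1)$) is simply inherited from the hypothesis of the corollary.

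Next I would handle the monotonicity conditions, which I expect to be the only mildly delicate part. Since $t_j = T(j/M)^r$ gives $\rho_j = \tau_j/\tau_{j-1} = \bigl(j^r-(j-1)^r\bigr)/\bigl((j-1)^r-(j-2)^r\bigr)$, and equivalently $\sigma_j = 1 - 2/(1+\rho_j)$, the claim $\sigma_j \ge \sigma_{j+1} \ge 0$ for $j \ge 2$ is equivalent to $\rho_j \ge \rho_{j+1} \ge 1$ for $j \ge 2$, i.e. to the convexity-type statement that the forward-difference ratios of $j \mapsto j^r$ are non-increasing. This is a standard fact for $r \ge 1$; one can prove it by setting $g(x) = x^r$, noting $g$ is convex so $g(j)-g(j-1)$ is non-decreasing in $j$ (hence $\rho_j \ge 1$), and then using that $g'' = r(r-1)x^{r-2}$ is non-increasing for $1 \le r \le 2$ and a direct convexity/concavity comparison of successive differences for general $r \ge 1$ — alternatively citing the relevant lemma in \cite{ChenMS_JSC} or \cite{NK_MC_L1} where exactly this monotonicity of $\{\rho_j\}$ for power-graded meshes is recorded. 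Finally, $\sigma_j \to 0$ as $j \to \infty$ (since $\rho_j = \bigl(1 - (1-1/j)^r\bigr)/\bigl((1-1/j)^r - (1-2/j)^r\bigr) \to 1$), and since $\{\sigma_j\}_{j\ge 2}$ is non-increasing, for any fixed $\bar\sigma \in (0,1)$ there is an index $K = K(\alpha,r) \lesssim 1$, independent of $M$, with $\sigma_j \le \bar\sigma$ for all $j \ge K+1$; one takes $\bar\sigma$ as provided by Theorem~\ref{theo_DCPr}.

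Having verified all hypotheses, Theorem~\ref{theo_main_stab_XM} applies verbatim and yields \eqref{main_stab_XM} for every $\{U^j\}_{j=0}^M$ with $U^0 = 0$, which is the assertion. The main obstacle, such as it is, is the clean two-sided estimate of $j^r - (j-1)^r$ and the monotonicity of $\{\rho_j\}$; both are routine for power meshes and can be dispatched by the mean value theorem plus monotonicity of $x^{r-1}$, or by quoting the analogous lemma from the cited works.
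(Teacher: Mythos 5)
Your proposal is correct and follows essentially the same route as the paper: verify \eqref{t_grid_gen_XM} and the monotonicity $\sigma_j\ge\sigma_{j+1}\ge0$ directly for the power-graded mesh (the paper treats these as immediate), then use the fact that $\{\rho_j\}$ decreases to $1$ (equivalently $\sigma_j\downarrow 0$) to pick $K=K(r,\bar\sigma)\lesssim 1$ with $\rho_{K+1}\le 2/(1-\bar\sigma)-1$, independently of $M$. The only difference is that you fill in the mean-value-theorem estimate $j^r-(j-1)^r\simeq j^{r-1}$ and the convexity argument explicitly, which the paper omits.
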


\begin{proof}
Clearly, the mesh satisfies \eqref{t_grid_gen_XM}, as well as
$\sigma_j\ge\sigma_{j+1}\ge0$
$\forall\,j\ge2$.
So it remains to find $K\lesssim 1$ such that
$\sigma_j\in[0,\bar \sigma]$ $\forall j\ge K+1$.
For the latter, in view of \eqref{grid_notation}, the sequence $\{\sigma_j\}$, as well as the related sequence $\{\rho_j\}$, is decreasing, so
it suffices to satisfy
\beq\label{K_def}
\rho_{K+1}=\frac{\tau_{K+1}}{\tau_{K}}=\frac{(K+1)^r-K^r}{K^r-(K-1)^r}
=\frac{(1+1/K)^r-1}{1-(1-1/K)^r}\le  \bar\rho:=\frac{2}{1-\bar\sigma}-1.
\eeq
As $\bar\sigma$ is independent of $M$, clearly, one can
 always choose such sufficiently large $K=K(r,\bar\sigma)$ independently of $M$.
\end{proof}

\begin{remark}[Modified graded mesh]\label{rem_K_mesh}
Although, as shown by Corollary~\ref{cor_main_stab_XM}, the result of Theorem~\ref{theo_main_stab_XM} applies to the standard graded mesh,
but it may still be desirable for the operator $\delta_t^\alpha$ to enjoy the inverse-monotonicity property
of type \eqref{UV_ab} $\forall\,j\ge1$ (rather than $\forall\,j\ge K+1$).
This can be easily ensured by a simple modification of the graded scheme as follows. Let
\beq\label{eq_K_mesh}
t_j:=T\, \hat t_j/\hat t_M,\qquad\mbox{where}\qquad\textstyle
\hat t_j:=\bigl(\frac{j+K'} M\bigr)^r-\bigl(\frac{K'} M\bigr)^r, \qquad K':=K-1,
\eeq
with $K$ from \eqref{K_def}.
To compute $K=K(r, \bar\sigma)$, note that $\bar\sigma$ can be computed, as described in Remark~\ref{rem_bar_sigma}.
%
Note also that if $K=1$,  one gets the standard graded mesh, while  $K>1$
implies that $\hat t_M=(1+K'/M)^r-(K'/M)^r\approx 1+r K'/M$.
Clearly, Corollary~\ref{cor_main_stab_XM} also applies to the modified graded mesh.
\end{remark}

\begin{remark}[Inverse-monotone modification of $\delta_t^\alpha$]\label{rem_modi_delta}
Consider the standard graded temporal mesh $\{t_j=T(j/M)^r\}_{j=0}^M$ for some $r\ge 1$.
As an alternative to modifying this mesh, as described in Remark~\ref{rem_K_mesh}, one can
ensure the inverse-monotonicity  \eqref{UV_ab} $\forall\,j\ge1$
by
tweaking the definition of $\delta_t^\alpha$ in \eqref{delta_t_def}
for $m\le K$ only as follows. Reset $\Pi^m:=\Pi_{1,j}$  on $(t_{j-1},t_j)$ $\forall j\le m\le K$
(i.e. the inverse-monotone L1 discretization is used for $m\le K$).
With this modification, also reset $\beta_j:=\beta_{K+1}$ $\forall\,j=1,\ldots,K$ in \eqref{def_eta}.
Then all results of this paper, that are valid for the graded mesh, also hold true for the modified discrete fractional-derivative
operator (as can be shown by only minor modifications in the relevant proofs).
%
\end{remark}

We finish this section with a more subtle version of Theorem~\ref{theo_main_stab_XM}, which will be useful when considering the fractional-derivative parabolic case in \S\ref{sec_parabolic}.

\renewcommand{\thetheorema}{\ref{theo_main_stab_XM}${}^*$}
\begin{theorema}\label{theo_main_stab_XM_star}
Let $\bar\sigma$ and the set $\{\beta_j\}_{j=1}^M$ be from Corollary~\ref{cor_inv_monot_general} (for any  $\theta\in[\frac12,1]$), and  $\{\kappa_{m,j}\}$ be
the unique set of the coefficients in the corresponding representation \eqref{UV} for the operator $\delta_t^\alpha$.
Also, given $\gamma\in\R$, let the temporal mesh satisfy
the conditions of Theorem~\ref{theo_main_stab_XM} with $K=1$.
Then for $\{U^j\}_{j=0}^M$ and $\{W^j\}_{j=0}^M$ with
 $U^0=W^0=0$ the following is true:
\beq\label{star_eq}
\left\{\begin{array}{cc}
\displaystyle\sum_{j=0}^m \kappa_{m,j}W^j\lesssim(\tau_1/ t_m)^{\gamma+1}&\forall\,m\ge 1\\[0.4cm]
\displaystyle
\frac{|U^j|-\beta_j |U^{j-1}|}{1-\beta_j}\lesssim W^j
&\forall\,j\ge 1
\end{array}\right.
\quad\Rightarrow\quad
|U^j|\lesssim
\U^j(\tau_1;\gamma),
\eeq
where $\U^j$ is defined in \eqref{main_stab_XM}.
\end{theorema}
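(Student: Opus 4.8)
The plan is to reduce Theorem~\ref{theo_main_stab_XM_star} to the already-proved Theorem~\ref{theo_main_stab_XM} by exploiting the factorization $\delta_t^\alpha=A_1A_2$ from Remark~\ref{rem_M_matrices} and the sign structure \eqref{UV_kappa}. Introduce the auxiliary sequence $\{\tilde U^j\}_{j=0}^M$ defined as the solution of the linear system $\tilde U^0=0$ and $\delta_t^\alpha\tilde U^m=\sum_{j=0}^m\kappa_{m,j}W^j=:\tilde F^m$ for $m\ge 1$; equivalently, in the basis decomposition \eqref{UV}, its associated $\{\tilde V^j\}$ satisfy $\tilde V^j=W^j$ for all $j$, i.e.\ $\tilde U^j=\beta_j\tilde U^{j-1}+(1-\beta_j)W^j$. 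By the first hypothesis in \eqref{star_eq}, $|\delta_t^\alpha\tilde U^m|\lesssim(\tau_1/t_m)^{\gamma+1}$ for all $m\ge1$ and $\tilde U^0=0$, so Theorem~\ref{theo_main_stab_XM} (applicable since the mesh satisfies its hypotheses with $K=1$) gives $|\tilde U^j|\lesssim\U^j(\tau_1;\gamma)$ for all $j\ge1$.

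Next I would show $|U^j|\le\tilde U^j$ for all $j$, which combined with the above completes the proof. This is where the M-matrix structure does the work. From the second hypothesis in \eqref{star_eq}, $(|U^j|-\beta_j|U^{j-1}|)/(1-\beta_j)\lesssim W^j=\tilde V^j$; absorbing the implied constant (note the whole statement is only up to a generic constant, so one may replace $W^j$ by $CW^j$ throughout without loss), set $\hat U^j$ to be the solution of $\hat U^0=0$, $\hat U^j=\beta_j\hat U^{j-1}+(1-\beta_j)\,C W^j$, so that $\hat U^j\simeq\tilde U^j$ and it suffices to prove $|U^j|\le\hat U^j$. Argue by induction on $j$: for $j=0$ both are zero; assuming $|U^{j-1}|\le\hat U^{j-1}$, the hypothesis gives $|U^j|\le\beta_j|U^{j-1}|+(1-\beta_j)CW^j\le\beta_j\hat U^{j-1}+(1-\beta_j)CW^j=\hat U^j$, using $\beta_j\in[0,1)$. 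Hence $|U^j|\le\hat U^j\lesssim\tilde U^j\lesssim\U^j(\tau_1;\gamma)$.

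The step I expect to be the mildly delicate one is the bookkeeping with the generic constant: the hypotheses of Theorem~\ref{theo_main_stab_XM} are stated with ``$\lesssim$'' and its conclusion is likewise, so I need to be careful that feeding $\{\tilde U^j\}$ (which satisfies $|\delta_t^\alpha\tilde U^m|\lesssim(\tau_1/t_m)^{\gamma+1}$ with the same generic constant as in \eqref{star_eq}) into that theorem is legitimate — it is, because the constant in Theorem~\ref{theo_main_stab_XM} is independent of the sequence and scales linearly. A second point worth a line of justification is that $\{\tilde V^j\}$ is genuinely the right decomposition: by uniqueness of the coefficients $\{\kappa_{m,j}\}$ in \eqref{UV} (guaranteed since $\kappa_{m,m}>0$, i.e.\ $A_1$ is invertible) and of $\{\beta_j\}$, the identities $\tilde V^j:=(\tilde U^j-\beta_j\tilde U^{j-1})/(1-\beta_j)$ and $\delta_t^\alpha\tilde U^m=\sum_j\kappa_{m,j}\tilde V^j$ together force $\tilde V^j=W^j$ once we define $\tilde U^j$ recursively from $\tilde V^j=W^j$, so the two characterizations of $\{\tilde U^j\}$ coincide. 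Everything else is the monotone-induction comparison above, which is routine.
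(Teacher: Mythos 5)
Your overall strategy is the same as the paper's: exploit the factorization $\delta_t^\alpha=A_1A_2$ from Remark~\ref{rem_M_matrices} and push the two hypotheses of \eqref{star_eq} through the two inverse-monotone factors separately (your induction showing $|U^j|\le\hat U^j$ is exactly forward substitution with the bidiagonal M-matrix $A_2$). However, there is a genuine gap at the step where you feed $\{\tilde U^j\}$ into Theorem~\ref{theo_main_stab_XM}. You assert that the first hypothesis of \eqref{star_eq} gives $|\delta_t^\alpha\tilde U^m|\lesssim(\tau_1/t_m)^{\gamma+1}$, but that hypothesis is one-sided: it bounds $\sum_{j}\kappa_{m,j}W^j$ from above only, and nothing prevents it from being arbitrarily negative (e.g.\ replace $\vec W$ by $\vec W-N A_1^{-1}\vec 1$ for huge $N>0$: the first hypothesis is preserved, the second hypothesis is weakened only in the harmless direction if the components stay above the left-hand sides, yet $\delta_t^\alpha\tilde U^m$ drops by $N$). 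Theorem~\ref{theo_main_stab_XM} as stated requires the two-sided bound $|\delta_t^\alpha U^j|\lesssim(\tau_1/t_j)^{\gamma+1}$, so you cannot apply it to $\tilde U$ directly; moreover in such a scenario $\tilde U$ itself need not satisfy $|\tilde U^j|\lesssim\U^j$, so the intermediate object you compare $|U^j|$ against is the wrong one.

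The repair is short and is precisely what the paper does: apply Theorem~\ref{theo_main_stab_XM} not to $\tilde U=A_2^{-1}\vec W$ but to $X:=A_2^{-1}A_1^{-1}(C\vec F)$ with $F^m:=(\tau_1/t_m)^{\gamma+1}$, $F^0:=0$, for which $\delta_t^\alpha X^m=CF^m$ holds with equality, so the two-sided hypothesis is trivially satisfied and $0\le X^j\lesssim\U^j$. Then the one-sided inequalities are handled by inverse-monotonicity alone: $A_1\vec W\le C\vec F$ gives $\vec W\le A_1^{-1}(C\vec F)$ since $A_1^{-1}\ge0$, and your induction (i.e.\ $A_2^{-1}\ge0$) gives $\vec{|U|}\le A_2^{-1}\vec W\le X\lesssim\vec\U$. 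Equivalently, you could keep your $\tilde U$ but insert an application of the comparison principle, Theorem~\ref{theo_DCPr}(i), to $X-\tilde U$ to conclude $\tilde U\le X$ before invoking Theorem~\ref{theo_main_stab_XM} for $X$. With that one adjustment your argument coincides with the paper's proof.
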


\begin{proof}
Note that the choice of $\bar\sigma$ and $\{\beta_j\}_{j=1}^M$
in Corollary~\ref{cor_inv_monot_general} ensures that the corresponding representation \eqref{UV} for the operator $\delta_t^\alpha$
satisfies \eqref{UV_kappa}, i.e. $\delta_t^\alpha$ is associated with an inverse-monotone matrix;
see Remark~\ref{rem_M_matrices}.
Using the notation of this remark, the assumptions in \eqref{star_eq}
become $A_1\vec{W}\lesssim \vec{F}$ and $A_2\vec{|U|}\lesssim\vec{W}$, where $F^m:=(\tau_1/ t_m)^{\gamma+1}$.
As $A_1$ and $A_2$ are inverse-monotone, so $\vec{W}\lesssim A_1^{-1}\vec{F}$, and then $\vec{|U|}\le A_2^{-1}\vec{W}\lesssim A_2^{-1}A_1^{-1}\vec{F}$.
On the other hand, Theorem~\ref{theo_main_stab_XM}  implies that $0\le A_2^{-1}A_1^{-1}\vec{F}\lesssim \vec{\U}$, which yields the desired assertion.
\end{proof}

\section{Error estimation for a simplest example\! (without spatial derivatives)}\label{sec_paradigm}

Consider a fractional-derivative problem without spatial derivatives together with its  discretization of type \eqref{delta_t_def}:%
\begin{subequations}\label{simplest}%
\begin{align}
D_t^\alpha u(t)&=f(t)&&\hspace{-1.6cm}\mbox{for}\;\;t\in(0,T],&&\hspace{-0.8cm} u(0)=u_0,
\\ \delta_{t}^{\alpha} U^m&=f(t_m)&&\hspace{-1.6cm}\mbox{for}\;\;m=1,\ldots,M,&&\hspace{-0.8cm} U^0=u_0.
\end{align}
\end{subequations}
Throughout this subsection, with slight abuse of notation, $\pt_t $ will be used for $\frac{d}{dt}$.

The main result of this section is the following theorem, to the proof of which we shall devote the remainder of the section.

\begin{theorem}\label{theo_simplest}
Let the temporal mesh satisfy
 \eqref{t_grid_gen_XM}
for some $r\ge 1$, and also
$\sigma_j\ge\sigma_{j+1}\ge0$
$\forall\,j\ge2$
and
$\sigma_j\in[0,\bar \sigma]$ $\forall j\ge K+1$,
where
  $\bar \sigma\in(0,1)$ is from
Theorem~\ref{theo_DCPr}, and $1\le K\lesssim 1$.
Suppose that $u$ and $\{U^m\}$  satisfy \eqref{simplest}, and $|\partial_t^l u |\lesssim 1+t^{\alpha-l}$ for $l = 1,3$ and $t\in(0,T]$.
Then $\forall\,m=1,\ldots,M$ one has
\beq\label{error}
|u(t_m)-U^m|\lesssim {\mathcal E}^m:=
\left\{\begin{array}{ll}
M^{-r}\,t_m^{\alpha-1}&\mbox{if~}1\le r<3-\alpha,\\[0.3cm]
M^{\alpha-3}\,t_m^{\alpha-1}[1+\ln(t_m/t_1)]&\mbox{if~}r=3-\alpha,\\[0.2cm]
M^{\alpha-3}\,t_m^{\alpha-(3-\alpha)/r}
&\mbox{if~}r>3-\alpha
\end{array}\right.
\eeq
\end{theorem}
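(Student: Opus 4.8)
The plan is to decompose the error $e^m := u(t_m)-U^m$ via a consistency/stability argument, using the discrete comparison principle (Theorem~\ref{theo_DCPr}) and the quasi-graded stability estimate (Theorem~\ref{theo_main_stab_XM}). First I would write the truncation error as $\delta_t^\alpha e^m = D_t^\alpha u(t_m) - \delta_t^\alpha U^m + (\delta_t^\alpha u(t_m) - D_t^\alpha u(t_m)) = \delta_t^\alpha u(t_m) - D_t^\alpha u(t_m) =: R^m$, where the equation $D_t^\alpha u = f = \delta_t^\alpha U$ is used, and $e^0 = 0$. Since $\delta_t^\alpha u(t_m) = D_t^\alpha(\Pi^m u)(t_m)$ by \eqref{delta_t_def}, the quantity $R^m$ is precisely $D_t^\alpha$ applied to the interpolation error $u - \Pi^m u$ evaluated at $t_m$. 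The task is then to bound $|R^m|$ sharply, taking into account the singular behaviour $|\partial_t^l u|\lesssim 1+t^{\alpha-l}$, and then feed this bound into Theorem~\ref{theo_main_stab_XM} with an appropriately chosen $\gamma$.

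The key steps, in order, are: (1) estimate the interpolation error of the piecewise-linear operator $\Pi_{1,1}$ on the first subinterval $(0,t_1)$ and of the piecewise-quadratic operators $\Pi_{2,j}$ on the later subintervals $(t_{j-1},t_j)$, using standard Lagrange-interpolation error formulae together with the derivative bounds; here the first interval contributes an $O(\tau_1^{1+\alpha})$-type term from the low-regularity near $t=0$, while interior intervals contribute terms controlled by $\tau_j^3 \sup |\partial_t^3 u| \lesssim \tau_j^3 t_{j-1}^{\alpha-3}$ (for $j\ge 2$, with appropriate care near the mesh origin). (2) Insert these into the integral defining $D_t^\alpha$ at $t=t_m$: split $\int_0^{t_m}(t_m-s)^{-\alpha}\partial_s(u-\Pi^m u)\,ds$ into the initial interval, the "near-diagonal" intervals $s\in(t_{m-2},t_m)$ (where $(t_m-s)^{-\alpha}$ is singular but the interpolation error is locally small), and the "far" intervals; sum the resulting geometric-type series using the mesh relations $\tau_j\simeq t_j/j$, $t_j\simeq \tau_1 j^r$. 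This is the standard consistency analysis (cf. the truncation-error bounds underlying \cite{NK_XM,ChenMS_JSC}) and should yield $|R^m| \lesssim t_m^{-\alpha}\bigl(\tau_1/t_m\bigr)^{\min\{r,\,3-\alpha\}} \cdot (\text{log factor if }r=3-\alpha)$, or equivalently $|R^m|\lesssim (\tau_1/t_m)^{\gamma+1}$ with $\gamma+1 = \alpha+\min\{r,3-\alpha\}\cdot$ (I would set this up so that $\gamma = \min\{r,3-\alpha\}-(1-\alpha)$ matches the three regimes in \eqref{error}). (3) Apply Theorem~\ref{theo_main_stab_XM}: with $U=e$, $U^0=0$, and $|\delta_t^\alpha e^m|\lesssim (\tau_1/t_m)^{\gamma+1}$, conclude $|e^m|\lesssim \U^m(\tau_1;\gamma)$, and check that $\tau_1\simeq M^{-r}$ turns $\U^m(\tau_1;\gamma)$ into exactly the three cases of ${\mathcal E}^m$ — the case $\gamma>0$ ($r<3-\alpha$) gives $\tau_1 t_m^{\alpha-1}=M^{-r}t_m^{\alpha-1}$, the case $\gamma=0$ ($r=3-\alpha$) gives the logarithmic term, and the case $\gamma<0$ ($r>3-\alpha$) gives $\tau_1 t_m^{\alpha-1}(\tau_1/t_m)^\gamma$, which after substituting $\tau_1\simeq M^{-r}$ and $t_m\simeq \tau_1 m^r$ (to re-express $\tau_1^{1+\gamma}$ in terms of $M^{\alpha-3}$) becomes $M^{\alpha-3}t_m^{\alpha-(3-\alpha)/r}$.

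The main obstacle I expect is step (2): obtaining the \emph{sharp} truncation-error bound $|R^m|\lesssim (\tau_1/t_m)^{\gamma+1}$ uniformly in $m$, in particular handling the interplay between the singularity of the kernel $(t_m-s)^{-\alpha}$ near $s=t_m$ and the reduced smoothness of $u$ near $s=0$. One must be careful about: the use of $\Pi_{2,j-1}$ rather than $\Pi_{2,j}$ on the last subinterval $(t_{m-1},t_m)$ (a "backward-looking" quadratic), which requires a separate local estimate; the transition intervals near $j=K$ where the mesh is quasi-uniform rather than graded; and the precise power of $j$ that emerges when summing $\sum_j \tau_j^3 t_j^{\alpha-3}(t_m-t_j)^{-\alpha}\tau_j$ and the analogous initial-interval contribution $\tau_1^{1+\alpha}t_m^{-\alpha}$ against the kernel. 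These are delicate but routine calculations of the type carried out in \cite{NK_XM,NK_MC_L1,ChenMS_JSC}; the novelty here is only that the stability half of the argument (step 3) now rests on the matrix-product/inverse-monotonicity machinery of \S\ref{sec_inv_monot}--\S\ref{sec_stab} rather than on an M-matrix structure of $\delta_t^\alpha$ itself. A minor additional point is verifying that the low-regularity hypothesis is used only for $l=1$ (near $t=0$, where the first-interval linear interpolation error is governed by $\partial_t u$) and $l=3$ (on interior intervals, for the quadratic interpolation error), consistent with the theorem's stated assumption $|\partial_t^l u|\lesssim 1+t^{\alpha-l}$ for $l=1,3$.
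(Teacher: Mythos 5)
Your overall strategy coincides with the paper's: write $e^m := u(t_m)-U^m$, observe $e^0=0$ and $\delta_t^\alpha e^m = r^m := \delta_t^\alpha u(t_m)-D_t^\alpha u(t_m)$, bound the truncation error by a local interpolation-error analysis (the paper's Lemma~\ref{lem_truncA}), and feed a bound of the form $|r^m|\lesssim(\tau_1/t_m)^{\gamma+1}$ into Theorem~\ref{theo_main_stab_XM}. Steps (1) and (3) are set up correctly in spirit. However, there is a genuine quantitative error in step (2) that breaks the argument: you identify the truncation-error exponent as $\gamma+1=\alpha+\min\{r,3-\alpha\}$, equivalently $\gamma=\min\{r,3-\alpha\}-(1-\alpha)$. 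Since $r\ge1>1-\alpha$, this $\gamma$ is \emph{always} positive, so Theorem~\ref{theo_main_stab_XM} would always return $|e^m|\lesssim\tau_1\, t_m^{\alpha-1}=M^{-r}t_m^{\alpha-1}$, which is not the claimed bound for $r\ge 3-\alpha$ (indeed for $r>3-\alpha$ it would assert a rate better than $M^{\alpha-3}$ in positive time, impossible for a scheme of order $3-\alpha$). The three regimes of \eqref{error} cannot be recovered from your $\gamma$.

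The correct exponent, established in Lemma~\ref{lem_truncA} and recorded in \eqref{r_m_gamma}, is $\gamma+1=\min\{\alpha+1,\,(3-\alpha)/r\}$, so that $\gamma>0\Leftrightarrow r<3-\alpha$, $\gamma=0\Leftrightarrow r=3-\alpha$ and $\gamma<0\Leftrightarrow r>3-\alpha$, exactly matching the case split in \eqref{error}. The computation you are missing is the assembly of the local contributions against the kernel: the relative interpolation error on $(t_{j-1},t_j)$ is $(\tau_j/t_j)^{3-\alpha}\psi^j\simeq(\tau_1/t_j)^{(3-\alpha)/r}\psi^j$ (using $\tau_j/t_j\simeq(\tau_1/t_j)^{1/r}$ from \eqref{t_grid_gen_XM}), and after weighting by $(t_j/t_m)^{\alpha+1}$ coming from the kernel and maximizing over $\tau_1\le t_j\le t_m$ one obtains $(\tau_1/t_m)^{\min\{\alpha+1,(3-\alpha)/r\}}$, with the first subinterval contributing $(\tau_1/t_m)^{\alpha+1}\psi^1$. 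Note also that your intermediate formula $|R^m|\lesssim t_m^{-\alpha}(\tau_1/t_m)^{\min\{r,3-\alpha\}}$ is not equivalent to $(\tau_1/t_m)^{\alpha+\min\{r,3-\alpha\}}$ (the prefactor $t_m^{-\alpha}$ is not $(\tau_1/t_m)^{\alpha}$), and it appears to conflate the final error rate $M^{-\min\{r,3-\alpha\}}$ at $t_m\simeq1$ (an output of the stability step) with the consistency error, which at $t_m\simeq1$ is actually of size $M^{-\min\{r(\alpha+1),\,3-\alpha\}}$. Once $\gamma$ is corrected, the remainder of your plan (checking $\psi^j\lesssim1$ from the hypotheses with $l=1,3$, and the three-case application of \eqref{main_stab_XM}) goes through as in the paper.
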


\begin{remark}[Convergence in positive time]\label{rem_positive_time}
Consider $t_m\gtrsim 1$. Then
${\mathcal E}^m\simeq M^{-r}$ for $r<3-\alpha$ and ${\mathcal E}^{m}\simeq M^{\alpha-3}$ for $r>3-\alpha$,
i.e. in the latter case the optimal convergence rate is attained.
For $r=3-\alpha$ one gets an almost optimal convergence rate as now ${\mathcal E}^{m}\simeq M^{\alpha-3}\ln M$.
\end{remark}

\begin{remark}[Global convergence]\label{rem_global_time}
Note that $
\max_{m\ge1}{\mathcal E}^m\simeq {\mathcal E}^1\simeq \tau_1^\alpha\simeq M^{-\alpha r}$ for $\alpha\le (3-\alpha)/r$,
while
$\max_{m\ge1}{\mathcal E}^m\simeq {\mathcal E}^M\simeq M^{\alpha-3}$ otherwise.
Consequently, Theorem~\ref{theo_simplest} yields the global error bound
$|u(t_m)-U^m|\lesssim M^{-\min \{\alpha r,3-\alpha\}} $.
This implies that
the optimal grading parameter for global accuracy is $r=(3-\alpha)/\alpha$.
\end{remark}

\begin{remark}
Theorem~\ref{theo_simplest} applies to the standard graded mesh
$\{t_j=T(j/M)^r\}_{j=0}^M$
for any $r\ge1$ (in view of Corollary~\ref{cor_main_stab_XM}), as well as to the modified graded mesh \eqref{eq_K_mesh}.
Furthermore, the proof of this theorem can be easily extended to the case of the  modified
discrete fractional-derivative operator described in Remark~\ref{rem_modi_delta}.
\end{remark}

To prove Theorem~\ref{theo_simplest}, we first get an auxiliary result.

\begin{lemma}[Truncation error]\label{lem_truncA}
For a sufficiently smooth function $u$, let $r^m:=\delta_{t}^{\alpha} u(t_m)-D_t^\alpha u(t_m)$ $\forall\,m\ge 1$, and
\begin{subequations}\label{psi_defA}
\begin{align}\label{psi_1_defA}
\psi^1&:=\sup_{s\in(0,t_2)}\!\!\bigl(s^{1-\alpha}|\pt_s u(s)|\bigr)+t_2^{-\alpha}{\rm osc}\bigl(u, [0,t_2]\bigr),
\\\label{psi_j_defA}
\psi^j&:= t_j^{3-\alpha}\!\! \sup_{s\in(t_{j-1},t_{j+1})}\!\!\!|\pt_s^3u(s)|\quad\qquad\forall\,2\le j\le M-1,
\end{align}
\end{subequations}
{\color{blue}where ${\rm osc}(u, [0,t_2]):=\sup_{[0,t_2]} u-\inf_{[0,t_2]} u$.}
Then, under conditions \eqref{t_grid_gen_XM} on the temporal mesh, one has
\beq\label{tr_er_boundA}
|r^m|\lesssim (\tau_1/t_m)^{\min\{\alpha+1,\,  (3-\alpha)/r\}} \max_{j\le \max\{1, m-1\}}\bigl\{ \psi^{j}\bigr\}
\qquad\forall\,m\ge 1.
\eeq
\end{lemma}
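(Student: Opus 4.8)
The plan is to bound the truncation error $r^m = \delta_t^\alpha u(t_m) - D_t^\alpha u(t_m)$ by exploiting the interpolation-operator structure of $\delta_t^\alpha$ in \eqref{delta_t_def}. Since $\delta_t^\alpha u(t_m) = D_t^\alpha(\Pi^m u)(t_m)$, we have $r^m = D_t^\alpha(\Pi^m u - u)(t_m)$, so by \eqref{CaputoEquiv},
$$
r^m = \frac1{\Gamma(1-\alpha)}\int_0^{t_m}(t_m-s)^{-\alpha}\,\pt_s\bigl[(\Pi^m u)(s) - u(s)\bigr]\,ds.
$$
First I would split this integral over the mesh subintervals $(t_{j-1},t_j)$. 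On each such interval, $\Pi^m$ acts as a local Lagrange interpolant (linear on $(0,t_1)$ and on the last interval $(t_{m-1},t_m)$, quadratic on interior intervals $(t_{j-1},t_j)$ with $1\le j < m$), so standard interpolation error estimates apply: on a quadratic-interpolation interval $\|\pt_s(\Pi^m u - u)\|_{\infty,(t_{j-1},t_j)}\lesssim \tau_j^2\sup|\pt_s^3 u|$ over the relevant three-point stencil, and on a linear interval $\|\pt_s(\Pi^m u - u)\|_\infty\lesssim \tau_j\sup|\pt_s^2 u|$ (or an oscillation bound near $t=0$ where $u$ is merely bounded with $\pt_s u$ integrable). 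The weights $\psi^j$ in \eqref{psi_defA} are precisely designed to encode these local bounds scaled by powers of $t_j$: $\psi^1$ handles the singular first interval, $\psi^j$ the smooth interior/quadratic part.

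Next I would estimate the contribution of each subinterval to the weighted integral. For the initial interval $(0,t_1)$ (and $(t_1,t_2)$), use that $(t_m-s)^{-\alpha}\simeq t_m^{-\alpha}$ there when $m$ is not too small, and bound $\int_0^{t_2}|\pt_s(\Pi^m u - u)|\,ds\lesssim \psi^1$ directly from \eqref{psi_1_defA}. For an interior interval $(t_{j-1},t_j)$ with $j\le m-2$, pull out $(t_m - s)^{-\alpha}\lesssim (t_m - t_j)^{-\alpha}$, use the local interpolation bound $\lesssim \tau_j^3 \sup|\pt_s^3 u|$ after integrating, rewrite $\tau_j^3\sup|\pt_s^3 u|\simeq (\tau_j/t_j)^3\,\psi^j$ via \eqref{psi_j_defA}, and then invoke the mesh relations \eqref{t_grid_gen_XM}, namely $\tau_j\simeq t_j/j$ and $t_j\simeq \tau_1 j^r$. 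For the final interval $(t_{m-1},t_m)$, where $\Pi^m$ is only linear (using nodes $t_{m-2},t_{m-1}$, i.e. $\Pi_{2,m-1}$ restricted), I would treat it separately with a linear interpolation estimate and a singular-kernel integral $\int_{t_{m-1}}^{t_m}(t_m-s)^{-\alpha}\,ds\simeq \tau_m^{1-\alpha}$; here care is needed because this interpolant extrapolates onto $(t_{m-1},t_m)$, so its error is $O(\tau_{m-1}\tau_m\sup|\pt_s^2 u|)$ or similar, still controlled by $\psi^{m-1}$.

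The main obstacle is the summation step: after the per-interval bounds, I must sum $\sum_{j} (\text{kernel factor})\cdot(\tau_j/t_j)^{p}\,\psi^j$ and show the total is $\lesssim (\tau_1/t_m)^{\min\{\alpha+1,\,(3-\alpha)/r\}}\max_{j\le\max\{1,m-1\}}\psi^j$. Pulling $\max_j \psi^j$ out, one is left with a purely geometric/mesh sum of the form $\sum_j (t_m - t_j)^{-\alpha}\tau_j\,(\tau_j/t_j)^{2}$ plus boundary terms; substituting $t_j\simeq\tau_1 j^r$, $\tau_j\simeq \tau_1 j^{r-1}$, and $t_m - t_j\simeq t_m$ for $j\le m/2$ versus $t_m-t_j\simeq (m-j)\tau_m\cdot\text{stuff}$ for $j$ near $m$, this becomes a Riemann-sum comparison to $\int_0^{t_m}(t_m-s)^{-\alpha}s^{-\beta}\,ds$-type integrals whose behaviour depends on whether $(3-\alpha)/r$ exceeds $\alpha+1$. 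The sharp exponent $\min\{\alpha+1,(3-\alpha)/r\}$ emerges exactly from balancing the near-singularity contribution (which gives $(\tau_1/t_m)^{\alpha+1}$, the "smooth interior wins" regime) against the accumulated error from the first few graded intervals near $t=0$ (which gives $(\tau_1/t_m)^{(3-\alpha)/r}$, the "initial singularity wins" regime). I would carry out this comparison by splitting the sum at $j\simeq m/2$ and estimating each half, using $r\ge 1$ throughout; this is the delicate computation but is entirely analogous to truncation-error analyses for the L1 and Alikhanov schemes in \cite{NK_MC_L1,NK_XM}, so I would lean on those techniques and cite them where the estimates are routine.
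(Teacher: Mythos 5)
Your overall architecture (cell-by-cell splitting, local interpolation estimates, a weighted mesh sum balancing the two regimes $\alpha+1$ versus $(3-\alpha)/r$) matches the paper's, but there is a genuine gap at the very first step that makes the claimed exponent unreachable. You keep the kernel $(t_m-s)^{-\alpha}$ and take absolute values of $\pt_s(\Pi^m u-u)$ under the integral. On $(0,t_1)$ (and the first few cells) this can only give
$\int_0^{t_1}(t_m-s)^{-\alpha}|\pt_s\chi|\,ds\lesssim t_m^{-\alpha}\int_0^{t_1}s^{\alpha-1}\,ds\,\psi^1\simeq(\tau_1/t_m)^{\alpha}\psi^1$,
whereas the lemma requires $(\tau_1/t_m)^{\alpha+1}$ whenever $r\le(3-\alpha)/(\alpha+1)$ (e.g.\ for $r=1$ the claimed exponent is exactly $\alpha+1$). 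You are short by a full factor $\tau_1/t_m$, and this loss propagates: with $\gamma+1=\alpha$ instead of $\alpha+1$ the stability theorem only yields $|e^m|\lesssim\tau_1^{\alpha}$, destroying the improved convergence in positive time that is the point of the result. The missing idea is the paper's integration by parts, $\Gamma(1-\alpha)r^m=\alpha\int_0^{t_m}(t_m-s)^{-\alpha-1}\chi(s)\,ds$ with $\chi:=u-\Pi^m u$ (boundary terms vanish since $\chi(0)=\chi(t_m)=0$), which converts the problem into bounding $\chi$ itself against the more singular kernel; the crucial gain is that $\chi(t_1)=0$ gives $|\chi(s)|\lesssim s^{\alpha-1}(t_1-s)\psi^1$, and the factor $(t_1-s)/(t_m-s)\le t_1/t_m$ supplies precisely the extra power you are missing. (Equivalently, one must exploit $\int_{t_{j-1}}^{t_j}\pt_s\chi\,ds=0$ on each cell by subtracting a constant from the kernel; any argument that estimates $|\pt_s\chi|$ pointwise and integrates cannot see this cancellation.)

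A secondary, fixable error: on the last interval $(t_{m-1},t_m)$ the operator is $\Pi_{2,m-1}$, the \emph{quadratic} interpolant at $\{t_{m-2},t_{m-1},t_m\}$, so it genuinely interpolates at $t_m$ — there is no extrapolation, and the relevant local error is $O(\tau^3\sup|\pt_s^3u|)$ for $\chi$ (after integration by parts one uses $|\chi|\lesssim\tau_m^2(t_m-s)t_m^{\alpha-3}\psi^{m-1}$, the factor $(t_m-s)$ again taming the strengthened singularity). Your proposed bound in terms of $\sup|\pt_s^2u|$ is not controlled by $\psi^{m-1}$, which only involves third derivatives.
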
\vspace{-0.3cm}

\begin{proof}
We closely imitate the proof of \cite[Lemma~4.7]{NK_XM}, so  some details will be skipped here.
From \eqref{delta_t_def}, recall that $\delta_{t}^{\alpha} u(t_m)=D^\alpha_t (\Pi^mu)(t_m)$.
Next, recalling the definition \eqref{CaputoEquiv} 
of $D^\alpha_t$, 
with the auxiliary function $\chi:=u-\Pi^mu$, we arrive at
$$
\Gamma(1-\alpha)\,r^m\!=\!
\int_0^{t_m}
\!\!\!\!(t_m-s)^{-\alpha}\underbrace{\pt_s[\Pi^mu(s)- u(s)]}_{{}=-\chi'(s)}\,ds
=\alpha
\!\int_0^{t_m}
\!\!\!\!(t_m-s)^{-\alpha-1}\chi(s)\,ds.
$$
Split the above integral to intervals $(0,t_1)$ and $(t_1,t_m)$.
On $(0,t_1)$ note that $\chi(t_1)=0$ implies $\chi(s)=-\int_s^{t_1}\chi'(\zeta)d\zeta$,
where
$|\chi'|\le|\pt_su|+|\pt_s (\Pi^mu)|$, while
$|\pt_s (\Pi^mu)|\lesssim t_2^{-1}{\rm osc}(u, [0,t_2])\le s^{\alpha-1}t_2^{-\alpha}{\rm osc}(u, [0,t_2])$
(in view of $\tau_1\simeq \tau_2$),
so a calculation yields
$|\chi(s)|\lesssim s^{\alpha-1}(t_1-s)\psi^1$.
Next, on any $(t_{j-1},t_j)$ for $1< j<m$ one has
$|\chi|\lesssim \tau_j^3 t_j^{\alpha-3}\psi^j$.
Finally, on $(t_{m-1},t_m)$, if $m>2$, then
$|\chi|\lesssim \tau^2_m(t_m-s)t_m^{\alpha-3}\psi^{m-1}$, while
if $m=2$, then we imitate the estimation on $(0,t_1)$ and again get
 $|\chi(s)|\lesssim s^{\alpha-1}(t_2-s)\psi^1
 \lesssim\tau^2_2(t_2-s)t_2^{\alpha-3}\psi^{1}$.

Combining our findings on $\chi$, a calculation shows that we get
the following version of \cite[(4.8)]{NK_XM}:
\beq\label{r_m_simplestA}
|r^m|\lesssim\mathring{\mathcal J}^m\,(\tau_1/t_m)^{\alpha+1}\,\psi^1+
{\mathcal J}^m\max_{j=2,\ldots,m}\bigl\{\nu_{m,j}(\tau_j/t_j)^{3-\alpha} (t_j/t_m)^{\alpha+1}
\,\psi^{j^*}\bigr\}
.
\eeq
Note that in various places here we also used $  t_{j-1}\simeq t_{j}\simeq s$ for $s\in(t_{j-1},t_{j})$, $j> 1$.
The notation in~\eqref{r_m_simplestA} is as follows:
\begin{align*}
\mathring{\mathcal J}^m&:= (t_m/\tau_1)^{\alpha+1}\int_0^{t_1}\!\!s^{\alpha-1}
(t_1-s)\,
(t_m-s)^{-\alpha-1}
ds
\lesssim 1
,
\\
{\mathcal J}^m&:=\tau_m^\alpha\, t_m^{\alpha/r+1}\int_{t_1}^{t_m}\!
s^{-\alpha/r-1}
\,(t_m-s)^{-\alpha-1}\,\min\{1,(t_m-s)/\tau_m\}\,ds \lesssim 1
,
\\[0.2cm]
\nu_{m,j}&:=(\tau_j/\tau_m)^\alpha\,(t_j/t_m)^{-\alpha(1-1/r)} \simeq 1, 
\\[0.4cm]
{j^*}&:=\min\{j,m-1\}.
\end{align*}
Here the bound on $\nu_{m,j}$ follows from $\tau_j/\tau_m\simeq (t_j/t_m)^{1-1/r}$ (in view of \eqref{t_grid_gen_XM}).
For the estimation of quantities of type $\mathring{\mathcal J}^m$ and ${\mathcal J}^m$, we refer the reader to \cite{NK_MC_L1}.
In particular,
for $\mathring{\mathcal J}^m$, we first use the observation that $(t_1-s)/(t_m-s)\le t_1/t_m$ for $s\in(0,t_1)$.
Then for  $\mathring{\mathcal J}^m$ and ${\mathcal J}^m$,
it is helpful to respectively use the substitutions $\hat s=s/t_1$ and $\hat s=s/t_m$,
while for ${\mathcal J}^m$ we also employ $ (t_1/t_m)^{-\alpha/r}\sim (\tau_m/t_m)^{-\alpha}$ (also in view of \eqref{t_grid_gen_XM}).

Combining the above observations  with \eqref{r_m_simplestA}  yields
$$
|r^m|\lesssim \max_{j\le \max\{1, m-1\}}\bigl\{\underbrace{(\tau_j/t_j)^{3-\alpha}}_{\simeq(\tau_1/t_j)^{(3-\alpha)/r}} (t_j/t_m)^{\alpha+1}
\,\psi^j\bigr\},
$$
where we also used $\tau_j/t_j\simeq(\tau_1/t_j)^{1/r}$ (in view of \eqref{t_grid_gen_XM}).
The desired bound \eqref{tr_er_boundA} follows as $\tau_1\le t_j\le t_m$.
\end{proof}

{\it Proof of Theorem~\ref{theo_simplest}.~}
Consider the error $e^m:=u(t_m)-U^m$, for which \eqref{simplest} implies
$e^0=0$ and $\delta_t^\alpha e^m=r^m$ $\forall\,m\ge1$, where
the truncation error $r^m$ is from Lemma~\ref{lem_truncA} and hence satisfies \eqref{tr_er_boundA}.
Furthermore, combining \eqref{psi_defA} with \eqref{t_grid_gen_XM} yields $\psi^1\lesssim 1$ (in view of $|{\rm osc}(u, [0,t_2])|\le\int_0^{t_2}|\pt_s u|\,ds\lesssim t_2^{\alpha}$)
and $\psi^j\lesssim 1$ for $j\ge 2$ (in view of $s\simeq t_j$ for $s\in(t_{j-1},t_{j+1})$ for this case).
Consequently, we arrive at
\beq\label{r_m_gamma}
|r^m|\lesssim (\tau/t_m)^{\gamma+1}\quad\forall\,m\ge 1,
\qquad\mbox{where}\;\;
\gamma+1:=\min\{\alpha+1,(3-\alpha)/r\}.
\eeq

Next we apply \eqref{main_stab_XM} from Theorem~\ref{theo_main_stab_XM} to bound $e^m=u(t_m)-U^m$.
Consider three cases.
\smallskip

Case $1\le r<3-\alpha$. Then both $(3-\alpha)/r> 1$ and $\alpha+1>1$,
so $\gamma>0$.
An application of \eqref{main_stab_XM} for this case yields
$|e^m|\lesssim \tau_1\, t_m^{\alpha-1}$, where $\tau_1\sim M^{-r}$.
\smallskip

Case $r=3-\alpha$. Then  $(3-\alpha)/r= 1$, while $\alpha+1>1$,
so $\gamma=0$.
An application of \eqref{main_stab_XM} yields
$|e^m|\lesssim \tau_1\, t_m^{\alpha-1}[1+\ln(t_m/t_1)]$,
where $\tau_1\sim M^{-r}=M^{\alpha-3}$.
\smallskip

Case $r>3-\alpha$.
Now
$(3-\alpha)/r< 1$, while $\alpha+1>1$,
so \mbox{$\gamma+1=(3-\alpha)/r<1$}.
Another application of \eqref{main_stab_XM}
(where, importantly,  unless $r\le (3-\alpha)/\alpha$, one has $\gamma\ge \alpha -1$)
 yields
$|e^m|\lesssim \tau_1\, t_m^{\alpha-1}(\tau_1/t_m)^{(3-\alpha)/r-1}\sim
\tau_1^{(3-\alpha)/r}t_m^{\alpha-(3-\alpha)/r}$,
where
$\tau_1^{(3-\alpha)/r}\sim M^{\alpha-3}$.
\endproof
%

\begin{remark}[General initial singularity]\label{rem_gen_sing}\color{blue}
Theorem~\ref{theo_simplest} can  be extended to the more general case
$|\partial_t^l u |\lesssim 1+t^{\alpha_0-l}$, where $0<\alpha_0\le \alpha$, as follows.
First, one needs to replace $\alpha$ by $\alpha_0$ in the definition \eqref{psi_defA} of $\{\psi^j\}$,
which will again lead to $\psi^j\lesssim 1$ $\forall\,j\ge1$.
With these changes, a new version of the bound \eqref{tr_er_boundA} (in Lemma~\ref{lem_truncA}) on the truncation error $r^m$
needs to be derived. This will lead
to the related bound \eqref{r_m_gamma} (in the proof of Theorem~\ref{theo_simplest}) with a new $\gamma=\gamma(\alpha,\alpha_0)$.
Once the latter is established, a straightforward application of Theorem~\ref{theo_main_stab_XM} will lead to a version of \eqref{error}
with a new  ${\mathcal E}^m={\mathcal E}^m(\alpha,\alpha_0)$.
Similarly, Theorems~\ref{theo_semi} and~\ref{theo_full} in Section~\ref{sec_parabolic} below can be generalized for
$\|\partial_t^l u (\cdot, t)\|_{L_2(\Omega)}\lesssim 1+t^{\alpha_0-l}$, and will also include the new ${\mathcal E}^m={\mathcal E}^m(\alpha,\alpha_0)$.
The full details for this more general case will be presented elsewhere.
\end{remark}

\section{Error analysis for the parabolic case}\label{sec_parabolic}
In this section, we shall generalize the analysis of \S\ref{sec_paradigm} to problems with
variable coefficients and
spatial derivatives. Both semidiscretizations in time and fully discrete methods will be addressed.

\subsection{Error analysis for semidiscretizations in time}\label{sec_semidiscr}
Consider the semidiscretization of our problem~\eqref{problem} in time using the discrete fractional-derivative operator
$\delta_t^\alpha$ from \eqref{delta_t_def}:
\beq\label{semediscr_method}
\delta_t^\alpha U^j +\LL U^j= f(\cdot,t_j)\;\;\mbox{in}\;\Omega,\quad U^j=0\;\;\mbox{on}\;\pt\Omega\quad\forall\,j=1,\ldots,M;\qquad U^0=u_0.
\eeq

\begin{lemma}[Stability for parabolic case]\label{lem_semi_stab}
Given $\gamma\in\R$,
let  the temporal mesh 
satisfy
\eqref{t_grid_gen_XM}
for some $1\le r\le (3-\alpha)/\alpha$
if $\gamma>\alpha-1$ or for some $r\ge1$ if $\gamma\le \alpha-1$.
There exists $\bar \sigma^*=\bar \sigma^*(\alpha)\in(0,1)$ such that if,
additionally, 
the temporal mesh satisfies $\sigma_j\ge\sigma_{j+1}\ge0$
$\forall\,j\ge2$
and
$\sigma_j\in[0,\bar \sigma^*]$ $\forall j\ge K+1$,
where
$1\le K\lesssim 1$
(i.e. $K$ is sufficiently large, but independent of $M$),
then for $\{U^j\}_{j=0}^M$ from \eqref{semediscr_method} one has
\beq\label{semi_stab}
 %
 \left.\begin{array}{c}
\|f(\cdot,t_j)\|_{L_2(\Omega)}
 \lesssim (\tau_1/ t_j)^{\gamma+1}
\\[0.2cm]
\forall j\ge1,\;\;\; U^0=0\;\;\mbox{in}\;\bar\Omega
\end{array}\right\}
 \quad\Rightarrow\quad
 \|U^j\|_{L_2(\Omega)}\lesssim
\U^j(\tau_1;\gamma)\;\;\forall\, j\ge 1,
\eeq
where $\U^j$ is defined in \eqref{main_stab_XM}.
\end{lemma}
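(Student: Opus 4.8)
The plan is to reduce the parabolic stability estimate \eqref{semi_stab} to the scalar stability result Theorem~\ref{theo_main_stab_XM} (more precisely, to its refined version Theorem~\ref{theo_main_stab_XM_star}) by an eigenfunction expansion of $\LL$. First I would let $\{(\lambda_k,w_k)\}_{k\ge1}$ be the $L_2(\Omega)$-orthonormal eigensystem of the elliptic operator $\LL$ on $H^1_0(\Omega)$, so that $\lambda_k\ge 0$ (using $a_k>0$, $c\ge0$). Writing $U^j=\sum_k U_k^j w_k$ and $f(\cdot,t_j)=\sum_k f_k^j w_k$, the scheme \eqref{semediscr_method} decouples into the scalar problems
\beq\label{scalar_semi}
\delta_t^\alpha U_k^j+\lambda_k U_k^j=f_k^j\quad\forall\,j\ge1,\qquad U_k^0=0,
\eeq
and by Parseval it suffices to bound $\sum_k |U_k^j|^2$ in terms of $\sum_k \sup_{i\le j}\bigl((\tau_1/t_i)^{-(\gamma+1)}|f_k^i|\bigr)^2$, i.e. to obtain the scalar bound with constants independent of $\lambda_k\ge 0$.

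Next I would analyze the scalar problem \eqref{scalar_semi} for fixed $\lambda:=\lambda_k\ge0$. Using the matrix-product representation \eqref{UV_ab} of $\delta_t^\alpha$, write $V_k^j:=(U_k^j-\beta_j U_k^{j-1})/(1-\beta_j)$, so that \eqref{scalar_semi} becomes $\sum_{i=0}^j\kappa_{j,i}V_k^i+\lambda U_k^j=f_k^j$. The key structural fact I want is that the presence of the nonnegative zeroth-order-in-space term $\lambda U_k^j$ does not destroy inverse-monotonicity: since $U_k^j=\sum_{i\le j}(1-\beta_i)V_k^i\Phi^i(t_j)$ with $\Phi^i(t_j)\ge0$, adding $\lambda U_k^j$ to the row $\sum_i\kappa_{j,i}V_k^i$ only increases the diagonal coefficient and makes off-diagonal coefficients more negative, so the combined operator $\delta_t^\alpha+\lambda$, viewed in the $\{V^i\}$ variables, is still an M-matrix (hence so is $A_1+\lambda A_2$ in the notation of Remark~\ref{rem_M_matrices}), uniformly in $\lambda\ge0$. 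Thus the discrete comparison principle of Theorem~\ref{theo_DCPr} applies verbatim with $\delta_t^\alpha$ replaced by $\delta_t^\alpha+\lambda$. The plan is then to run the barrier-function argument: take the barrier $\{B^j\}=\{C\,\U^j(\tau_1;\gamma)\}$ from the proof of Theorem~\ref{theo_main_stab_XM}, which satisfies $\delta_t^\alpha B^j\gtrsim(\tau_1/t_j)^{\gamma+1}$, and note $(\delta_t^\alpha+\lambda)B^j\ge\delta_t^\alpha B^j\gtrsim(\tau_1/t_j)^{\gamma+1}\ge|f_k^j|$ since $\lambda B^j\ge0$; then part (ii) of the comparison principle gives $|U_k^j|\le B^j=C\,\U^j(\tau_1;\gamma)$ with $C$ independent of $\lambda$. (Equivalently, via Theorem~\ref{theo_main_stab_XM_star}, one checks $A_1\vec W\lesssim\vec F$ and $(A_2+\lambda A_1^{-1}\cdot 0)\dots$ — but the direct barrier argument is cleaner here.)

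Finally I would assemble: since the scalar bound $|U_k^j|\lesssim\U^j(\tau_1;\gamma)\cdot\bigl(\text{bound on }|f_k^\cdot|\bigr)$ holds with a constant uniform in $k$, squaring and summing over $k$ and using Parseval gives $\|U^j\|_{L_2}\lesssim\U^j(\tau_1;\gamma)$ whenever $\|f(\cdot,t_j)\|_{L_2}\lesssim(\tau_1/t_j)^{\gamma+1}$. The value $\bar\sigma^*$ is simply the $\bar\sigma$ of Theorem~\ref{theo_DCPr} (one may need to shrink it slightly, but it depends only on $\alpha$), and the case split on $\gamma$ versus $\alpha-1$ and the range of $r$ is inherited directly from Theorem~\ref{theo_main_stab_XM}. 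I expect the main obstacle to be verifying carefully that the barrier function construction from \cite{NK_XM} (designed for $\delta_t^\alpha$ alone) still works after adding $\lambda$: one must confirm that the inequality $\delta_t^\alpha B^j\gtrsim(\tau_1/t_j)^{\gamma+1}$ is of the right sign so that the extra nonnegative term $\lambda B^j$ only helps, and that the initial matching $|U^0|\le B^0$ is unaffected — this is exactly the place where the fact that $\delta_t^\alpha$ is associated with an \emph{inverse-monotone} (rather than merely invertible) matrix is essential, and it is the reason the hard work of \S\ref{sec_inv_monot} is needed.
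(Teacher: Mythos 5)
Your reduction to scalar problems via the $L_2(\Omega)$-orthonormal eigensystem of $\LL$ is legitimate for the semidiscretization \eqref{semediscr_method}, and the Parseval reassembly at the end would be fine \emph{if} the scalar bound for $\delta_t^\alpha U_k^j+\lambda U_k^j=f_k^j$ held uniformly in $\lambda\ge0$. But the step where you argue this is based on a sign error, and it is precisely the crux of the difficulty. Writing $U_k^j=\sum_{i\le j}(1-\beta_i)\Phi^i(t_j)V_k^i$ with $\Phi^i(t_j)=\beta_{i+1}\cdots\beta_j\ge0$, the row of the $V$-system becomes $\sum_{i\le j}\bigl[\kappa_{j,i}+\lambda(1-\beta_i)\Phi^i(t_j)\bigr]V_k^i$: the added terms are \emph{non-negative}, so they make the off-diagonal entries $\kappa_{j,i}\le0$ \emph{less} negative (and positive for large $\lambda$), not ``more negative'' as you claim. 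Equivalently, in the notation of Remark~\ref{rem_M_matrices} the combined matrix is $A_1+\lambda A_2^{-1}$ (not $A_1+\lambda A_2$), and since $A_2^{-1}\ge0$ entrywise, the M-matrix structure of $A_1$ is destroyed for large $\lambda$; a sum of an M-matrix and a non-negative matrix is in general not inverse-monotone. This is exactly the obstruction the paper flags in the introduction: because $\delta_t^\alpha$ is associated with a product $A_1A_2$ of M-matrices rather than with an M-matrix, adding $\lambda I$ does not preserve inverse-monotonicity, so Theorem~\ref{theo_DCPr} does not ``apply verbatim'' to $\delta_t^\alpha+\lambda$, and your barrier argument for each mode has no foundation. (For the L1 scheme, whose matrix is itself an M-matrix, your argument would go through; here it does not.)

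The paper takes a different and essentially unavoidable route: an energy argument. It tests \eqref{semediscr_method} with $V^m=\frac{1}{1-\beta_m}U^m-\frac{\beta_m}{1-\beta_m}U^{m-1}$, introduces the weighted quantity $w^m:=\bigl(\|V^m\|_{L_2(\Omega)}^2+\frac{\kappa_{m,m}^{-1}}{1-\beta_m}\langle\LL U^m,U^m\rangle\bigr)^{1/2}$, and controls the cross term $\frac{\beta_m}{1-\beta_m}\langle\LL U^m,U^{m-1}\rangle$ by Cauchy--Schwarz; this requires the additional coefficient condition \eqref{theta_choice}, which is why $\bar\sigma^*$ may have to be strictly smaller than the $\bar\sigma$ of Theorem~\ref{theo_DCPr} (your assertion that $\bar\sigma^*$ is ``simply the $\bar\sigma$'' misses this). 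The resulting scalar inequalities $\sum_{j=1}^m\kappa_{m,j}w^j\lesssim\|f^m\|_{L_2(\Omega)}$ together with $\frac{\|U^j\|-\beta_j\|U^{j-1}\|}{1-\beta_j}\le w^j$ are then fed into Theorem~\ref{theo_main_stab_XM_star}, which is the reason that refined two-matrix version of the stability theorem exists. If you want to rescue your eigenfunction approach, you would need to prove uniform-in-$\lambda$ inverse-monotonicity (or stability) of $A_1A_2+\lambda I$ directly, which is not done and is not obviously true.
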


\begin{proof} Fix any $\theta\in[\frac12,1)$ and
let $\bar\sigma=\bar\sigma(\alpha,\theta)$ 
be from Corollary~\ref{cor_inv_monot_general}.

(i)
First, we shall prove that
there exists   $\bar \sigma^*\in(0, \bar \sigma]$ such that
$\sigma_j\in[0,\bar \sigma^*]$ $\forall j\ge K+1$ implies
\beq\label{theta_choice}
\left(|\kappa_{m,m-1}|^{-1}\frac{\beta_m}{1-\beta_m}\right)^{\!2}\le
\frac{\kappa^{-1}_{m,m}}{1-\beta_m}\cdot\frac{\kappa^{-1}_{m-1,m-1}}{1-\beta_{m-1}}
\qquad\forall m\ge K+2,
\eeq
where $\beta_m$ are defined by \eqref{def_eta} for $m> K$ and are equal to $\beta_{K+1}$ for $m\le K$,
while
$\{\kappa_{m,j}\}$ is
the unique set of the coefficients in the corresponding representation \eqref{UV} for the operator $\delta_t^\alpha$.
To check this, rewrite
\eqref{theta_choice} as
$$
\left(\beta_m\frac{\kappa_{m,m}}{1-\beta_m}\cdot\frac{1-\beta_{m-1}}{|\kappa_{m,m-1}|}\right)^{\!2}\!\le
\frac{\kappa_{m,m}}{1-\beta_m}\cdot\frac{1-\beta_{m-1}}{\kappa_{m-1,m-1}}
\quad\Leftarrow\quad
\left(\frac{\theta}{2-\theta}\right)^{\!\!2/\alpha}\!\!\le \frac{\ttau_{m-1}}{\ttau_{m}},
$$
where the implication follows from Remark~\ref{rem_kappa_ratio}.
The sequence $\{\sigma_j\}$ is decreasing, and hence, in view of \eqref{grid_notation}, the related sequence $\{\rho_j\}$ is also decreasing, so it suffices to check that
$$
\frac{\ttau_{K+2}}{\ttau_{K+1}}=\frac{\rho_{K+1}\tau_K+\rho_{K+2}\tau_{K+1}}{\tau_K+\tau_{K+1}}\le\rho_{K+1}\le\bar\rho^*:= \left(\frac{2-\theta}{\theta}\right)^{\!\!2/\alpha}\in(1,3^{2/\alpha}].
$$
From this, $\bar\sigma^*:=\min\bigl\{\bar\sigma,\,{\color{blue}1-\frac{2}{1+\bar\rho^*}}\bigr\}>0$ will yield \eqref{theta_choice}.

(ii)
%
Next, suppose that $K=1$, i.e. $\sigma_j\in[0,\bar \sigma^*]$ $\forall j\ge 2$. Then \eqref{theta_choice} holds true $\forall\,m\ge3$, while, in view of  Corollary~\ref{cor_inv_monot_general},
$\bar\sigma^*\le\bar\sigma$
implies that the operator $\delta_t^\alpha$ enjoys the inverse-monotone representation
\eqref{UV_ab}.
Now, using  \eqref{UV}  and the notation  $V^m=\frac{1}{1-\beta_m}U^m-\frac{\beta_m}{1-\beta_m}U^{m-1}$ and $f^m:=f(\cdot,t_m)$, we can rewrite \eqref{semediscr_method} as
$$
\kappa_{m,m} V^m+\LL U^m=|\kappa_{m,m-1}|V^{m-1}+\sum_{j=1}^{m-2}|\kappa_{m,j}|V^{j}+ f^m.
$$
Consider the inner product of the above and $V^m$ using the notation
$$
w^m:=\sqrt{\displaystyle\|V^m\|_{L_2(\Omega)}^2+{\textstyle\frac{\kappa_{m,m}^{-1}}{1-\beta_m}}\langle\LL U^m, U^m\rangle}.
$$
Then
\begin{align*}
\kappa_{m,m} (w^m)^2=&\underbrace{|\kappa_{m,m-1}|\langle V^{m-1},V^m\rangle+\frac{\beta^m}{1-\beta_m}\langle\LL U^m, U^{m-1}\rangle}_{=:|\kappa_{m,m-1}|\,Q^m}
\\[-0.3cm]
&
\hspace{4.4cm}{}+\sum_{j=1}^{m-2}|\kappa_{m,j}|\underbrace{\langle V^{m},V^j\rangle}_{\le w^m w^j}
+\!\!\!\!\underbrace{\langle V^m ,f^m\rangle}_{\le w^m\|f^m\|_{L_2(\Omega)}}\!\!\!\!.
\end{align*}
Here
$Q^1=0$ in view of $U^0=V^0=0$ and
$Q^m\le w^m w^{m-1}$ $\forall\,m\ge 3$ in view of \eqref{theta_choice}.
For $m=2$ there is a sufficiently large constant $1\le \bar C\lesssim 1$ such that $Q^2\le \bar C w^2 w^{1}$.
(For example, using a version of Remark~\ref{rem_kappa_ratio} for $m=2$ and imitating the argument in part (i), one can choose $\bar C=(\ttau_{2}^{-\alpha}\,\B_2)/(\tau_{1}^{-\alpha}\,\B_{1})$;
see also \eqref{AB_def_a} and \eqref{t_grid_gen_XM}.)
Now dividing by $w^m$ and recalling that, by~\eqref{UV_kappa}, $\kappa_{m,j}\le 0$ $\forall\,j<m$,
we get
$$
\kappa_{1,1} w^1\le \|f^1\|_{L_2(\Omega)},\quad\!
\kappa_{2,2} w^2+\kappa_{2,1} (\bar C w^1)\le \|f^2\|_{L_2(\Omega)},\quad\!
\sum_{j=1}^m\kappa_{m,j}w^m\le\|f^m\|_{L_2(\Omega)}.
$$
Set $W^1:=\bar C w^1$ and $W^j:=w^j$ otherwise. Then, in view of  $\bar C\ge1$ and $\kappa_{m,1}\le 1$ $\forall m\ge3$, we arrive at
$\sum_{j=1}^m\kappa_{m,j}W^j\lesssim \|f^m\|_{L_2(\Omega)} \lesssim (\tau_1/ t_m)^{\gamma+1}$ $\forall\,m\ge 1$, while $W^0=0$.
Also, $\frac{1}{1-\beta_j}\|U^j\|_{L_2(\Omega)}-\frac{\beta_j}{1-\beta_j}\|U^{j-1}\|_{L_2(\Omega)}\le \|V^j\|_{L_2(\Omega)}\le w^j\le W^j$
$\forall\,j\ge 1$.
Thus we conclude that the assumptions in \eqref{star_eq} are satisfied with $|U^j|$ replaced by $\|U^j\|_{L_2(\Omega)}$.
So an application of Theorem~{\ref{theo_main_stab_XM}${}^*$} yields
the desired assertion $ \|U^j\|_{L_2(\Omega)}\lesssim
\U^j(\tau_1;\gamma)$ $\forall\,j\ge 1$.

(iii)
It remains to consider the case $K>1$, which will be reduced to the case
 $K=1$
by imitating part (ii) in the proof of Theorem~\ref{theo_main_stab_XM}.
In particular,
 for $m\le K$ we now get
$\|U^m\|_{L_2(\Omega)}+\tau_1^\alpha \langle\LL U^m, U^m\rangle \lesssim \sum_{j=0}^{m-1}\|U^j\|_{L_2(\Omega)}+\tau_1^\alpha\|f^m\|_{L_2(\Omega)}$.
Here
$\|f^m\|_{L_2(\Omega)}\lesssim 1$, so
$\|U^m\|_{L_2(\Omega)}
\lesssim \tau_1^\alpha\simeq \U^m$ $\forall\,m\le K$.
For $m> K$, we proceed exactly as in part (ii) in the proof of Theorem~\ref{theo_main_stab_XM} and employ $\{\mathring{U}^j\}$ and $\mathring{\delta}^\alpha_t$.
\end{proof}

\begin{theorem}\label{theo_semi}
Let the temporal mesh satisfy
 \eqref{t_grid_gen_XM}
for some $r\ge 1$, and also
$\sigma_j\ge\sigma_{j+1}\ge0$
$\forall\,j\ge2$
and
$\sigma_j\in[0,\bar \sigma^*]$ $\forall j\ge K+1$,
where
  $\bar \sigma^*\in(0,1)$ is from
Lemma~\ref{lem_semi_stab}, and $1\le K\lesssim 1$.
Suppose that $u$ from \eqref{problem} satisfies $\|\partial_t^l u (\cdot, t)\|_{L_2(\Omega)}\lesssim 1+t^{\alpha-l}$ for $l = 1,3$ and $t\in(0,T]$.
Then for  $\{U^m\}$ from \eqref{semediscr_method}, one has
$$
\|u(\cdot,t_m)-U^m\|_{L_2(\Omega)}\lesssim {\mathcal E}^m
\qquad\forall\,m=1,\ldots,M,
$$
where ${\mathcal E}^m$ is from \eqref{error}.
\end{theorem}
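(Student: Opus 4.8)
The plan is to mirror the proof of Theorem~\ref{theo_simplest}, replacing the scalar stability bound by the parabolic stability estimate of Lemma~\ref{lem_semi_stab} and using the truncation-error analysis of Lemma~\ref{lem_truncA} in its Banach-space-valued form. First I would set $e^m:=u(\cdot,t_m)-U^m$ and subtract \eqref{semediscr_method} from \eqref{problem} taken at $t=t_m$; since $D_t^\alpha u+\LL u=f$ and $\delta_t^\alpha U^m+\LL U^m=f(\cdot,t_m)$, this gives $e^0=0$ in $\bar\Omega$, $e^m=0$ on $\pt\Omega$, and
\[
\delta_t^\alpha e^m+\LL e^m=r^m\quad\text{in }\Omega,\qquad r^m:=\delta_t^\alpha u(\cdot,t_m)-D_t^\alpha u(\cdot,t_m),\quad m\ge1,
\]
so that $r^m$ is purely a temporal truncation error, with no spatial derivatives entering.

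Next I would bound $\|r^m\|_{L_2(\Omega)}$. The proof of Lemma~\ref{lem_truncA} only manipulates the scalar integral $\int_0^{t_m}(t_m-s)^{-\alpha-1}\chi(s)\,ds$ with $\chi=u-\Pi^m u$ and controls $\chi$ through $\pt_s u$, $\pt_s^3 u$ and an oscillation term; all of these make sense and obey the very same inequalities when $u$ is the $L_2(\Omega)$-valued map $t\mapsto u(\cdot,t)$ and $|\cdot|$ is replaced throughout by $\|\cdot\|_{L_2(\Omega)}$. Hence \eqref{tr_er_boundA} holds with this norm, and, exactly as in the proof of Theorem~\ref{theo_simplest}, the regularity hypothesis $\|\pt_t^l u(\cdot,t)\|_{L_2(\Omega)}\lesssim1+t^{\alpha-l}$ for $l=1,3$ together with \eqref{t_grid_gen_XM} yields $\psi^j\lesssim1$ for all $j\ge1$ and therefore
\[
\|r^m\|_{L_2(\Omega)}\lesssim(\tau_1/t_m)^{\gamma+1}\quad\forall\,m\ge1,\qquad\gamma+1:=\min\{\alpha+1,(3-\alpha)/r\}.
\]

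Finally I would apply Lemma~\ref{lem_semi_stab} to $\{e^m\}$, which solves \eqref{semediscr_method} with $f(\cdot,t_j)$ replaced by $r^j$ and with zero initial data. Here one must check the admissibility of $(r,\gamma)$: in the only nontrivial regime $r>3-\alpha$ one has $\gamma+1=(3-\alpha)/r$, so $\gamma>\alpha-1$ forces $r<(3-\alpha)/\alpha$, while if $r\ge(3-\alpha)/\alpha$ then $\gamma\le\alpha-1$ --- in either case the hypotheses of Lemma~\ref{lem_semi_stab} are met. That lemma then gives $\|e^m\|_{L_2(\Omega)}\lesssim\U^m(\tau_1;\gamma)$, and distinguishing the three cases $1\le r<3-\alpha$ (so $\gamma>0$), $r=3-\alpha$ (so $\gamma=0$) and $r>3-\alpha$ (so $\gamma+1=(3-\alpha)/r<1$), and substituting $\tau_1\sim M^{-r}$ and the formula \eqref{main_stab_XM} for $\U^m$, reproduces line for line the concluding computation in the proof of Theorem~\ref{theo_simplest} and gives $\|u(\cdot,t_m)-U^m\|_{L_2(\Omega)}\lesssim\mathcal E^m$ with $\mathcal E^m$ from \eqref{error}.

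The one step that is not a verbatim transcription is the passage of Lemma~\ref{lem_truncA} to the vector-valued setting, but as noted this is immediate since only norms and time derivatives are involved; the real content of the theorem is already packaged into Lemma~\ref{lem_semi_stab}, whose proof (unlike the scalar case) required the extra refinement \eqref{theta_choice} of the inverse-monotone structure together with the energy argument based on $w^m$. I therefore expect no genuine obstacle here beyond bookkeeping.
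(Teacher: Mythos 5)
Your proposal is correct and follows essentially the same route as the paper: form the error equation $\delta_t^\alpha e^m+\LL e^m=r^m$ with $e^0=0$, transfer Lemma~\ref{lem_truncA} to the $L_2(\Omega)$-valued setting (the paper does this by reading $\psi^j=\psi^j(x)$ pointwise in $x$ and then taking norms, which is equivalent to your Banach-space-valued reading) to get $\|r^m\|_{L_2(\Omega)}\lesssim(\tau_1/t_m)^{\gamma+1}$, and then invoke Lemma~\ref{lem_semi_stab} with the same three-case discussion of $r$ as in Theorem~\ref{theo_simplest}. Your explicit check that $(r,\gamma)$ always falls into one of the two admissible regimes of the stability lemma is a worthwhile detail the paper leaves implicit.
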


\begin{remark}
Theorem~\ref{theo_semi} applies to the standard graded mesh
$\{t_j=T(j/M)^r\}_{j=0}^M$
for any $r\ge1$ (in view of Corollary~\ref{cor_main_stab_XM}), as well as to the modified graded mesh \eqref{eq_K_mesh}.
Furthermore, the proof of this theorem can be easily extended to the case of the  modified
discrete fractional-derivative operator described in Remark~\ref{rem_modi_delta}.
\end{remark}

\begin{proof}
Consider the error $e^m:=u(\cdot,t_m)-U^m$, for which \eqref{problem} and \eqref{semediscr_method} imply
$\delta_t^\alpha e^m+\LL e^m=r^m$ $\forall\,m\ge1$ and
$e^0=0$, where
the truncation error, defined by $r^m:=\delta_t^\alpha u(\cdot,t_m)-D_t^\alpha u(\cdot,t_m)$, is estimated in Lemma~\ref{lem_truncA} and hence satisfies \eqref{tr_er_boundA}.
In the latter
$\psi^j=\psi^j(x)$ is defined by \eqref{psi_defA}, in which $u(\cdot)$ is understood as $ u(x,\cdot)$  when evaluating $\pt_su$, $\pt_s^2 u$, etc.
Furthermore, combining \eqref{psi_defA} with \eqref{t_grid_gen_XM} yields $\|\psi^1\|_{L_2(\Omega)}\lesssim 1$
(in view of $\|{\rm osc}(u(\cdot, t), [0,t_2])\|_{L_2(\Omega)}\le\int_0^{t_2}\|\pt_s u\|_{L_2(\Omega)}ds\lesssim t_2^{\alpha}$)
and $\|\psi^j\|_{L_2(\Omega)}\lesssim 1$ for $j\ge 2$ (in view of $s\simeq t_j$ for $s\in(t_{j-1},t_{j+1})$ for this case).
Consequently, we get a version of \eqref{r_m_gamma}:
$\|r^m\|_{L_2(\Omega)}\lesssim (\tau/t_m)^{\gamma+1}$ $\forall\,m\ge 1$,
where
$\gamma+1:=\min\{\alpha+1,(3-\alpha)/r\}$.
It remains to apply the estimate of type \eqref{semi_stab} from Lemma~\ref{lem_semi_stab} to $\{e^j\}$ considering the three cases for $r$
as in the proof of Theorem~\ref{theo_simplest}.
\end{proof}

\subsection{Error analysis for full discretizations}
In this section, we
discretize \eqref{problem}--\eqref{LL_def}, posed in a general bounded  Lipschitz domain  $\Omega\subset\R^d$,
 by applying
 a standard finite element spatial approximation to the temporal semidiscretization~\eqref{semediscr_method}.
 Let $S_h \subset H_0^1(\Omega)\cap C(\bar\Omega)$ be a Lagrange finite element space of fixed degree $\ell\ge 1$ 
 relative to a
 quasiuniform simplicial triangulation
 $\mathcal T$ of $\Omega$.
 (To simplify the presentation, it will be assumed that the triangulation covers $\Omega$ exactly.)
 Now,  $\forall\, m=1,\ldots,M$, let $u^m_h \in S_h$ satisfy
 \beq\label{FE_problem}
\begin{array}{l}
\langle \delta_t^\alpha u_h^m,v_h\rangle +A (u_h^m,v_h)= \langle f(\cdot,t_m),v_h\rangle\qquad\forall v_h\in S_h
\end{array}
\eeq
with  some $u_h^0\approx u_0$.
Here $\langle \cdot, \cdot\rangle$ is the  $L_2(\Omega)$ inner product, while
 $A$ is the standard symmetric bilinear form associated with the elliptic operator $\LL$ (i.e. $A(v,w)=\langle\LL v,w\rangle$ for smooth $v$ and $w$ in $H_0^1(\Omega)$).

\begin{lemma}[Stability for full discretizations]\label{lem_full_stab}
Under the conditions of Lemma~\ref{lem_semi_stab} on the temporal mesh,
 for $\{u_h^j\}_{j=0}^M$ from \eqref{FE_problem} one has
\beq\label{full_stab}
 \left.\begin{array}{c}
\|f(\cdot,t_j)\|_{L_2(\Omega)}
 \lesssim (\tau_1/ t_j)^{\gamma+1}
\\[0.2cm]
\forall j\ge1,\;\;\; u_h^0=0\;\;\mbox{in}\;\bar\Omega
\end{array}\right\}
 \quad\Rightarrow\quad
 \|u_h^j\|_{L_2(\Omega)}\lesssim
\U^j(\tau_1;\gamma),
\eeq
where $\U^j$ is defined in \eqref{main_stab_XM}.
\end{lemma}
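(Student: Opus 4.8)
The plan is to mirror the proof of Lemma~\ref{lem_semi_stab} verbatim, replacing the semidiscrete solution $U^m\in H_0^1(\Omega)$ by $u_h^m\in S_h$ and the identity $\langle\LL U^m,U^m\rangle$ by the bilinear form $A(u_h^m,u_h^m)$. First I would recall from Corollary~\ref{cor_inv_monot_general} that, for the chosen $\theta\in[\frac12,1)$ and $\bar\sigma^*\le\bar\sigma$, the operator $\delta_t^\alpha$ admits the inverse-monotone representation \eqref{UV_ab}, and that part~(i) of the proof of Lemma~\ref{lem_semi_stab} already furnishes the key inequality \eqref{theta_choice} $\forall m\ge K+2$ — this part involves only the mesh and the coefficients $\{\kappa_{m,j}\},\{\beta_j\}$, so it carries over unchanged. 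Then, writing $V^m=\frac{1}{1-\beta_m}u_h^m-\frac{\beta_m}{1-\beta_m}u_h^{m-1}$ and $f^m:=f(\cdot,t_m)$, test \eqref{FE_problem} with $v_h=V^m\in S_h$ (note $V^m$ is a legitimate test function since $S_h$ is a linear space), which gives
\beq
\kappa_{m,m}\langle V^m,V^m\rangle+A(u_h^m,V^m)=|\kappa_{m,m-1}|\langle V^{m-1},V^m\rangle+\sum_{j=1}^{m-2}|\kappa_{m,j}|\langle V^j,V^m\rangle+\langle f^m,V^m\rangle.
\eeq

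Next I would introduce $w^m:=\sqrt{\|V^m\|_{L_2(\Omega)}^2+\frac{\kappa_{m,m}^{-1}}{1-\beta_m}A(u_h^m,u_h^m)}$, which is well-defined and nonnegative because $\kappa_{m,m}>0$, $1-\beta_m>0$, and $A$ is nonnegative (as $c\ge0$ and $a_k>0$). Expanding $A(u_h^m,V^m)=\frac{1}{1-\beta_m}A(u_h^m,u_h^m)-\frac{\beta_m}{1-\beta_m}A(u_h^m,u_h^{m-1})$ and using symmetry of $A$ together with the Cauchy–Schwarz inequality $A(u_h^m,u_h^{m-1})\le A(u_h^m,u_h^m)^{1/2}A(u_h^{m-1},u_h^{m-1})^{1/2}$, one gets $\kappa_{m,m}(w^m)^2\le |\kappa_{m,m-1}|Q^m+\sum_{j=1}^{m-2}|\kappa_{m,j}|w^mw^j+w^m\|f^m\|_{L_2(\Omega)}$, with $Q^m\le w^mw^{m-1}$ for $m\ge3$ by \eqref{theta_choice}, $Q^1=0$, and $Q^2\le\bar C w^2w^1$ for a suitable $\bar C\ge1$ with $\bar C\lesssim1$ exactly as in Lemma~\ref{lem_semi_stab}. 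Dividing through by $w^m$ and recalling $\kappa_{m,j}\le0$ for $j<m$, set $W^1:=\bar Cw^1$, $W^j:=w^j$ for $j\ge2$; then $\sum_{j=1}^m\kappa_{m,j}W^j\lesssim\|f^m\|_{L_2(\Omega)}\lesssim(\tau_1/t_m)^{\gamma+1}$ and $\frac{1}{1-\beta_j}\|u_h^j\|_{L_2(\Omega)}-\frac{\beta_j}{1-\beta_j}\|u_h^{j-1}\|_{L_2(\Omega)}\le\|V^j\|_{L_2(\Omega)}\le w^j\le W^j$, so Theorem~\ref{theo_main_stab_XM}${}^*$ applies with $|U^j|$ replaced by $\|u_h^j\|_{L_2(\Omega)}$, yielding $\|u_h^j\|_{L_2(\Omega)}\lesssim\U^j(\tau_1;\gamma)$. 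The case $K>1$ is reduced to $K=1$ exactly as in part~(iii) of Lemma~\ref{lem_semi_stab}, replacing $\langle\LL U^m,U^m\rangle$ by $A(u_h^m,u_h^m)\ge0$ throughout.

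The only substantive point where the finite-element setting differs from the semidiscrete one is the handling of the spatial term: in Lemma~\ref{lem_semi_stab} one uses $\langle\LL U^m,U^m\rangle$ and $\langle\LL U^m,U^{m-1}\rangle$, whereas here one must use the bilinear form $A(\cdot,\cdot)$ and invoke its symmetry, boundedness and nonnegativity (coercivity is not even needed, only $A(v,v)\ge0$). Since $V^m\in S_h$ is an admissible test function and all algebraic manipulations are identical, I do not expect a genuine obstacle — the proof is essentially a transcription. The mildly delicate bookkeeping item, as in Lemma~\ref{lem_semi_stab}, is the $m=2$ correction factor $\bar C$ and the verification that $\bar\sigma^*$ from part~(i) of Lemma~\ref{lem_semi_stab} (which depends only on $\alpha$ and $\theta$, not on $h$) is the same constant here; this is immediate because part~(i) never touches the spatial operator.
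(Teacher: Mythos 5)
Your proposal is correct and follows exactly the route the paper takes: the paper's own proof is a one-line instruction to imitate Lemma~\ref{lem_semi_stab} with $\{U^j\}$ replaced by $\{u_h^j\}$ and \eqref{FE_problem} tested with $v_h:=V^m$, which is precisely what you carry out. Your additional remarks (that $V^m\in S_h$ is an admissible test function, that only symmetry and nonnegativity of $A$ are needed, and that \eqref{theta_choice} is purely a mesh statement) are accurate elaborations of the same argument.
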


\begin{proof}
We closely imitate the proof of Lemma~\ref{lem_semi_stab} replacing $\{U^j\}$ everywhere by $\{u_h^j\}$, and also
employing \eqref{FE_problem} with
$v_h:=V^m=\frac{1}{1-\beta_m}u_h^m-\frac{\beta_m}{1-\beta_m}u_h^{m-1}$ instead of \eqref{semediscr_method}.
\end{proof}

Our error analysis will invoke the Ritz projection $\RR_h u(t)\in S_h$ of $u(\cdot,t)$
associated with our discretization of the operator ${\LL}$ and
defined by $A (\RR_h u,v_h)=\langle{\LL} u,v_h\rangle$ $\forall v_h\in S_h$ and $t\in[0,T]$.
Assuming that the domain is such that
$\|v\|_{W^2_2(\Omega)}\lesssim \|\LL v\|_{L_2(\Omega)}$ whenever $\LL v\in L_2(\Omega)$, for the error of the Ritz projection $\rho(\cdot, t)=\RR_h u(t)-u(\cdot, t)$ one has
\beq\label{Ritz_er_L2}
\|\pt_t^l \rho(\cdot, t)\|_{L_2(\Omega)}\lesssim h\inf_{v_h\in S_h}\|\pt_t^l u(\cdot, t)-v_h\|_{W^1_2(\Omega)}
\qquad\mbox{for}\;\;
l=0,1,\;\;t\in(0,T].
\eeq
For $l=0$, see, e.g., \cite[Theorem~5.7.6]{BrenScott}.
A similar result for $l=1$ follows as $\pt_t\rho(\cdot, t)=\RR_h \dot u(t)-\dot u(\cdot, t)$, where $\dot u:=\pt_t u$.

\begin{theorem}\label{theo_full}
Let the temporal mesh satisfy
 \eqref{t_grid_gen_XM}
for some $r\ge 1$, and also
$\sigma_j\ge\sigma_{j+1}\ge0$
$\forall\,j\ge2$
and
$\sigma_j\in[0,\bar \sigma^*]$ $\forall j\ge K+1$,
where
  $\bar \sigma^*\in(0,1)$ is from
Lemma~\ref{lem_semi_stab}, and $1\le K\lesssim 1$.
Suppose that $u$ from \eqref{problem} satisfies $\|\partial_t^l u (\cdot, t)\|_{L_2(\Omega)}\lesssim 1+t^{\alpha-l}$ for $l = 1,3$ and $t\in(0,T]$.
Then for  $\{u_h^m\}$ from \eqref{FE_problem}, subject to $u_h^0=\RR_h u_0$, one has
\beq\label{error_full}
\|u(\cdot,t_m)-u_h^m\|_{L_2(\Omega)}\lesssim{} {\mathcal E}^m+\|\rho(\cdot, t_m)\|_{L_2(\Omega)}+t_m^\alpha\sup_{t\in(0,t_{m})}\!\!\bigl\{t^{1-\alpha} \|\pt_t\rho(\cdot, t)\|_{L_2(\Omega)}\bigr\}
\eeq
$\forall\,m\ge 1$, where $\rho(\cdot, t):=\RR_h u(t)-u(\cdot, t)$, and ${\mathcal E}^m$ is from \eqref{error}.
\end{theorem}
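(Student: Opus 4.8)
The plan is the standard Ritz--projection splitting, which reduces the proof to the stability estimate of Lemma~\ref{lem_full_stab}, the only genuinely new ingredient being a bound on $\delta_t^\alpha$ applied to the Ritz error $\rho$.

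First I would write $u(\cdot,t_m)-u_h^m=-\rho(\cdot,t_m)+\xi^m$ with $\xi^m:=\RR_h u(t_m)-u_h^m\in S_h$, so that $\|u(\cdot,t_m)-u_h^m\|_{L_2(\Omega)}\le\|\rho(\cdot,t_m)\|_{L_2(\Omega)}+\|\xi^m\|_{L_2(\Omega)}$ and $\xi^0=0$ in view of $u_h^0=\RR_h u_0$. Testing \eqref{problem} at $t=t_m$ against $v_h\in S_h$, subtracting \eqref{FE_problem}, and using both the defining property $A(\RR_h u(t_m),v_h)=\langle\LL u(\cdot,t_m),v_h\rangle=\langle f(\cdot,t_m)-D_t^\alpha u(\cdot,t_m),v_h\rangle$ of the Ritz projection and $\delta_t^\alpha\RR_h u(t_m)=\delta_t^\alpha u(\cdot,t_m)+\delta_t^\alpha\rho(\cdot,t_m)$ (immediate from the linearity in \eqref{delta_t_def}, with $\delta_t^\alpha\rho(\cdot,t_m):=D_t^\alpha(\Pi^m\rho)(t_m)$), one obtains the error equation
\[
\langle\delta_t^\alpha\xi^m,v_h\rangle+A(\xi^m,v_h)=\langle r^m+\delta_t^\alpha\rho(\cdot,t_m),v_h\rangle\qquad\forall\,v_h\in S_h,
\]
where $r^m:=\delta_t^\alpha u(\cdot,t_m)-D_t^\alpha u(\cdot,t_m)$ is precisely the truncation error of Lemma~\ref{lem_truncA}. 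Exactly as in the proof of Theorem~\ref{theo_semi}, the assumed regularity $\|\pt_t^l u(\cdot,t)\|_{L_2(\Omega)}\lesssim1+t^{\alpha-l}$ ($l=1,3$) together with \eqref{psi_defA}--\eqref{tr_er_boundA} gives $\|r^m\|_{L_2(\Omega)}\lesssim(\tau_1/t_m)^{\gamma+1}$ with $\gamma+1:=\min\{\alpha+1,(3-\alpha)/r\}$.

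The main obstacle is the term $\delta_t^\alpha\rho(\cdot,t_m)$, since \eqref{Ritz_er_L2} controls only $\rho$ and $\pt_t\rho$, not $\pt_t^3\rho$, so the truncation-error machinery is unavailable for it. I would instead estimate it directly from $\delta_t^\alpha\rho(\cdot,t_m)=\tfrac{1}{\Gamma(1-\alpha)}\int_0^{t_m}(t_m-s)^{-\alpha}\pt_s(\Pi^m\rho)(s)\,ds$: writing $\Pi^m\rho$ on each mesh interval in Newton (divided-difference) form, and bounding the divided differences of $\rho$ by $\sup\|\pt_s\rho\|_{L_2(\Omega)}$ over one or two adjacent mesh intervals (using boundedness of the relevant mesh-ratio factors, and using that $\Pi^1$ is linear on the first interval), one gets $\|\pt_s(\Pi^m\rho)(s)\|_{L_2(\Omega)}\lesssim R_m\max\{s,\tau_1\}^{\alpha-1}$, where $R_m:=\sup_{t\in(0,t_m)}\{t^{1-\alpha}\|\pt_t\rho(\cdot,t)\|_{L_2(\Omega)}\}$. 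The Beta-function identity $\int_0^{t_m}(t_m-s)^{-\alpha}s^{\alpha-1}\,ds=\Gamma(1-\alpha)\Gamma(\alpha)$, together with $t_1^{\alpha-1}\int_0^{t_1}(t_m-s)^{-\alpha}\,ds\lesssim1$, then yields $\|\delta_t^\alpha\rho(\cdot,t_m)\|_{L_2(\Omega)}\lesssim R_m$, i.e. a bound of the form $(\tau_1/t_m)^{0}$.

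Finally, by linearity of \eqref{FE_problem} I would split $\xi^m=\xi_1^m+\xi_2^m$, with $\xi_i^0=0$ and $\xi_1,\xi_2$ solving \eqref{FE_problem}-type problems with right-hand sides $\langle r^m,v_h\rangle$ and $\langle\delta_t^\alpha\rho(\cdot,t_m),v_h\rangle$ respectively. Applying Lemma~\ref{lem_full_stab} to $\xi_1$ with $\gamma+1=\min\{\alpha+1,(3-\alpha)/r\}$ --- distinguishing the three cases $r<3-\alpha$, $r=3-\alpha$, $r>3-\alpha$ exactly as in the proof of Theorem~\ref{theo_simplest}, which is what makes the admissibility constraint on $r$ in Lemma~\ref{lem_semi_stab} hold --- gives $\|\xi_1^m\|_{L_2(\Omega)}\lesssim\U^m(\tau_1;\gamma)\simeq{\mathcal E}^m$. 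Applying it to $\xi_2$ with $\gamma=-1$ (so $\gamma\le\alpha-1$, needing only $r\ge1$, and $\U^m(\tau_1;-1)=t_m^\alpha$) gives $\|\xi_2^m\|_{L_2(\Omega)}\lesssim R_m\,t_m^\alpha$; to keep the supremum in $R_m$ over $(0,t_m)$ rather than $(0,T)$, one uses that the scheme is causal ($\delta_t^\alpha W^j$ depends only on $W^0,\dots,W^j$) and applies Lemma~\ref{lem_full_stab} on the truncated mesh $\{t_j\}_{j=0}^m$, the small-index range $m\le K$ being handled directly as in part~(iii) of the proof of Lemma~\ref{lem_semi_stab} (using $\kappa_{m,m}\simeq\tau_1^{-\alpha}$, $\|r^m\|_{L_2(\Omega)}\lesssim1$ and $\|\delta_t^\alpha\rho(\cdot,t_m)\|_{L_2(\Omega)}\lesssim R_m$). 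Adding $\|\rho(\cdot,t_m)\|_{L_2(\Omega)}$, $\|\xi_1^m\|_{L_2(\Omega)}$ and $\|\xi_2^m\|_{L_2(\Omega)}$ yields \eqref{error_full}.
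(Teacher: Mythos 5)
Your proposal is correct and follows the same overall architecture as the paper's proof: the Ritz-projection splitting $u(\cdot,t_m)-u_h^m=\xi^m-\rho(\cdot,t_m)$, the identical error equation with right-hand side $\langle r^m+\delta_t^\alpha\rho(\cdot,t_m),v_h\rangle$, and two applications of Lemma~\ref{lem_full_stab} by linearity --- once with $\gamma+1=\min\{\alpha+1,(3-\alpha)/r\}$ for the truncation-error piece and once with $\gamma+1=0$ for the Ritz-error piece, the latter giving the factor $\U^m(\tau_1;-1)=t_m^\alpha$. The one place where you genuinely diverge is the bound $\|\delta_t^\alpha\rho(\cdot,t_m)\|_{L_2(\Omega)}\lesssim R_m$ with $R_m:=\sup_{t\in(0,t_m)}\{t^{1-\alpha}\|\pt_t\rho(\cdot,t)\|_{L_2(\Omega)}\}$: the paper splits $\delta_t^\alpha\rho=D_t^\alpha\rho+r_\rho^m$, bounds $\|D_t^\alpha\rho\|_{L_2(\Omega)}\lesssim R_m$ directly from \eqref{CaputoEquiv}, and controls $r_\rho^m$ by rerunning the truncation-error analysis of Lemma~\ref{lem_truncA} with $u$ replaced by $\rho$ and the third derivatives in \eqref{psi_j_defA} downgraded to first derivatives; you instead estimate $\delta_t^\alpha\rho$ in one step from its integral representation, bounding $\|\pt_s(\Pi^m\rho)(s)\|_{L_2(\Omega)}\lesssim R_m\max\{s,\tau_1\}^{\alpha-1}$ via divided differences of the piecewise quadratics and then integrating against $(t_m-s)^{-\alpha}$ with the Beta-function identity. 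Both routes deliver the same $O(R_m)$ bound; yours is somewhat more self-contained (no need to restate a modified Lemma~\ref{lem_truncA}), while the paper's reuses existing machinery at the cost of an extra intermediate object $r_\rho^m$. Your additional remark on causality, justifying the supremum over $(0,t_m)$ rather than $(0,T)$, is a point the paper leaves implicit and is handled correctly.
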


\begin{proof}
Let $e_h^m:=\RR_h u(t_m)-u_h^m\in S_h$.
Then $u(\cdot,t_m)-u_h^m=e_h^m-\rho(\cdot, t_m)$, so
it suffices to prove the desired bounds for $e_h^m$.
Note that $e_h^0=0$, while
 a standard calculation using \eqref{FE_problem} and \eqref{problem} yields
\begin{align}\label{prob_for_e}
\langle \delta_t^\alpha e_h^m,v_h\rangle +A (e_h^m,v_h)
&=
\langle \delta_t^\alpha \underbrace{\RR_h u}_{=\rho+u}(t_m),v_h\rangle +\underbrace{A (\RR_h u(t_m),v_h)}_{{}=\langle{\LL} u(\cdot, t_m),v_h\rangle}
-\langle f(\cdot,t_m),v_h\rangle\hspace{-0.2cm}
\\[0.2cm]\notag
&=\langle
\delta_t^\alpha \rho(\cdot, t_m)+r^m
,v_h\rangle\qquad\forall v_h\in S_h\;\;\forall\,m\ge1.
\end{align}
Here $r^m=\delta_t^\alpha u(\cdot,t_m)-D_t^\alpha u(\cdot,t_m)$ is from the proof of Theorem~\ref{theo_semi},
where it was shown that $\|r^m\|_{L_2(\Omega)}\lesssim (\tau/t_m)^{\gamma+1}$ $\forall\,m\ge 1$
with
$\gamma+1:=\min\{\alpha+1,(3-\alpha)/r\}$.

Suppose that $\delta_t^\alpha \rho(\cdot, t_m)=0$ $\forall\,m$ in \eqref{prob_for_e}.
Then an application of  the estimate of type \eqref{full_stab} from Lemma~\ref{lem_full_stab}  to $\{e_h^j\}$, with the three cases for $r$ considered separately
as in the proof of Theorem~\ref{theo_simplest}, yields $\| e_h^m\|_{L_2(\Omega)}\lesssim {\mathcal E}^m$.

Next, suppose that  $r^m=0$ $\forall\,m$ in \eqref{prob_for_e}, and  $\sup_{t\in(0,T)}\{t^{1-\alpha}\|\pt_t \rho(\cdot, t)\|_{L_2(\Omega)}\}= 1$.
Then, by \eqref{CaputoEquiv},
$\|D_t^\alpha \rho(\cdot, t_m)\|_{L_2(\Omega)}\lesssim 1$. For $r_\rho^m:=\delta_t^\alpha \rho(\cdot, t_m)-D_t^\alpha \rho(\cdot, t_m)$,
a version of the truncation error estimation in Lemma~\ref{lem_truncA} yields
$$
|r_\rho^m|\lesssim (\tau/t_m)^{\min\{\alpha+1,\,  (1-\alpha)/r\}} \max_{j=1,\ldots,m-1}\bigl\{ \psi_\rho^{j}\bigr\},
$$
where $\{\psi_\rho^{j}\}$ are defined by versions of
\eqref{psi_defA} with $u$ replaced by $\rho$, and $3$ in two places in \eqref{psi_j_defA} replaced by $1$. So we conclude that
$\|r_\rho^m\|_{L_2(\Omega)}\lesssim 1$, and hence $\|\delta_t^\alpha \rho(\cdot, t_m)\|_{L_2(\Omega)}\lesssim 1$ $\forall\,m\ge 1$.
Now an application of  the estimate of type \eqref{full_stab} from Lemma~\ref{lem_full_stab}  to $\{e_h^j\}$,
with $\gamma+1=0$,
yields $\| e_h^m\|_{L_2(\Omega)}\lesssim \U^m(\tau_1;-1)=t_m^\alpha$, where we also used the definition of $\U^m$ from \eqref{main_stab_XM}.

As \eqref{prob_for_e} is a linear problem for $\{e_h^m\}$, combining our findings yields \eqref{error_full}.
\end{proof}

Recalling the error bounds~\eqref{Ritz_er_L2} for the the Ritz projection, one immediately gets the following result.

\begin{corollary}\label{cor_L2}
 Under the conditions of Theorem~\ref{theo_semi},
let $\|\pt_t^l u(\cdot,t)\|_{W^{\ell+1}_2(\Omega)}\lesssim 1+t^{\alpha-l}$ for $l=0,1$
and $t\in(0,T]$.
Then there exists a unique solution  $\{u_h^m\}_{m=1}^M$ of \eqref{FE_problem} subject to $u_h^0=\RR_h u_0$,
and
\beq\label{err_bound_cor}
\|u(\cdot,t_m)-u_h^m\|_{L_2(\Omega)}\lesssim {}{\mathcal E}^m+
h^{\ell+1}\qquad\forall\,m\ge 1,
\eeq
where ${\mathcal E}^m$ is from \eqref{error}.
\end{corollary}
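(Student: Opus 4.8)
The plan is to deduce \eqref{err_bound_cor} directly from Theorem~\ref{theo_full}, the Ritz-projection estimate \eqref{Ritz_er_L2}, and a standard finite-element approximation bound; note that the mesh hypotheses of Theorem~\ref{theo_full} coincide with those of Theorem~\ref{theo_semi}, and the regularity $\|\pt_t^l u(\cdot,t)\|_{L_2(\Omega)}\lesssim 1+t^{\alpha-l}$ for $l=1,3$ is part of ``the conditions of Theorem~\ref{theo_semi}'', so Theorem~\ref{theo_full} is applicable. I would begin with existence and uniqueness of $\{u_h^m\}_{m=1}^M$, arguing by induction on $m$: in \eqref{FE_problem}, expanding $\delta_t^\alpha u_h^m$ through the representation \eqref{UV}, the only term involving the unknown $u_h^m$ is $\tfrac{\kappa_{m,m}}{1-\beta_m}\langle u_h^m,v_h\rangle$, since each $V^j$ with $j<m$ depends only on $u_h^0,\dots,u_h^{m-1}$. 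By \eqref{kappa_obersve} together with $\B_m>0$ (see \eqref{AB_bounds_uniform} and \eqref{AB_bounds_graded}), one has $\tfrac{\kappa_{m,m}}{1-\beta_m}>0$, so the bilinear form $(u_h^m,v_h)\mapsto \tfrac{\kappa_{m,m}}{1-\beta_m}\langle u_h^m,v_h\rangle+A(u_h^m,v_h)$ is symmetric, bounded and coercive on the finite-dimensional space $S_h$ (using $a_k>0$, $c\ge 0$ and the Poincar\'e inequality); hence $u_h^m$ is uniquely determined by $u_h^0,\dots,u_h^{m-1}$. Since the $l=0$ hypothesis forces $u_0\in W_2^{\ell+1}(\Omega)\subset H_0^1(\Omega)$, the choice $u_h^0=\RR_h u_0$ is legitimate, and this yields a unique solution sequence.

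Next I would estimate the two Ritz-projection terms on the right-hand side of \eqref{error_full}. Combining \eqref{Ritz_er_L2} with the standard estimate $\inf_{v_h\in S_h}\|w-v_h\|_{W_2^1(\Omega)}\lesssim h^{\ell}\|w\|_{W_2^{\ell+1}(\Omega)}$ for $w\in W_2^{\ell+1}(\Omega)$ (see, e.g., \cite{BrenScott}) and the hypothesis $\|\pt_t^l u(\cdot,t)\|_{W_2^{\ell+1}(\Omega)}\lesssim 1+t^{\alpha-l}$, one obtains $\|\pt_t^l\rho(\cdot,t)\|_{L_2(\Omega)}\lesssim h^{\ell+1}(1+t^{\alpha-l})$ for $l=0,1$. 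For $l=0$ this gives $\|\rho(\cdot,t_m)\|_{L_2(\Omega)}\lesssim h^{\ell+1}(1+t_m^{\alpha})\lesssim h^{\ell+1}$ because $0<t_m\le T$; for $l=1$ it gives $t^{1-\alpha}\|\pt_t\rho(\cdot,t)\|_{L_2(\Omega)}\lesssim h^{\ell+1}(t^{1-\alpha}+1)\lesssim h^{\ell+1}$ for $t\in(0,T]$, whence $t_m^{\alpha}\sup_{t\in(0,t_m)}\{t^{1-\alpha}\|\pt_t\rho(\cdot,t)\|_{L_2(\Omega)}\}\lesssim t_m^{\alpha}h^{\ell+1}\lesssim h^{\ell+1}$. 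Substituting these two bounds into \eqref{error_full} produces $\|u(\cdot,t_m)-u_h^m\|_{L_2(\Omega)}\lesssim {\mathcal E}^m+h^{\ell+1}$, which is \eqref{err_bound_cor}.

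The argument is essentially bookkeeping once Theorem~\ref{theo_full} is available, so I do not expect a genuine obstacle here; the only point that needs a little care is the absorption of the temporal singularity $t^{\alpha-1}$ of $\pt_t\rho$ against the weight $t^{1-\alpha}$ appearing in \eqref{error_full} (and, relatedly, checking that each occurrence of a fixed power of $t_m\le T$ is harmless under $\lesssim$). All the heavy lifting --- the stability estimate and the truncation-error analysis --- has already been carried out in Lemma~\ref{lem_full_stab} and Theorem~\ref{theo_full}.
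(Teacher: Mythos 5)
Your proposal is correct and follows essentially the same route as the paper, which derives \eqref{err_bound_cor} by substituting the Ritz-projection bounds \eqref{Ritz_er_L2} (combined with the standard approximation estimate and the assumed $W_2^{\ell+1}$ regularity) into \eqref{error_full} of Theorem~\ref{theo_full}. You additionally spell out the unique solvability via the positivity of $\kappa_{m,m}/(1-\beta_m)$ and coercivity of $A$, which the paper leaves implicit; that part is also sound.
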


\begin{remark} The above
Theorem~\ref{theo_semi} and Corollary~\ref{cor_L2} apply to the standard graded mesh
$\{t_j=T(j/M)^r\}_{j=0}^M$
for any $r\ge1$ (in view of Corollary~\ref{cor_main_stab_XM}), as well as to the modified graded mesh \eqref{eq_K_mesh}.
Furthermore, the proofs can be easily extended to the case of the  modified
discrete fractional-derivative operator described in Remark~\ref{rem_modi_delta}.
\end{remark}

\begin{remark}
The assumptions on the derivatives of $u$ made in Corollary~\ref{cor_L2},
as well as in Theorems~\ref{theo_simplest} and~\ref{theo_semi},
are realistic {\color{blue}under certain compatibility conditions};
see examples in \cite[\S6]{NK_MC_L1}.
\color{blue}
More generally,  $u_0\in W^{2}_2(\Omega)\cap H_0^1(\Omega)$ implies
$\|\pt^l_t u(\cdot,t)\|_{W^{2}_2(\Omega)}\lesssim t^{-l}$
\cite[Theorem~2.1]{laz_review}. Hence, for the case $\ell=1$ one gets
$\|\pt_t (\Pi^m\rho)(\cdot,t_m)\|_{L_2(\Omega)}\lesssim h^{2}(\tau+t_m)^{-1}$,
which, by \eqref{delta_t_def},  yields
$\|\delta_t^\alpha \rho(\cdot, t_m)\|_{L_2(\Omega)}\lesssim h^{2}|\ln\tau|\, t_m^{-\alpha}$, so 
the error bound of type \eqref{err_bound_cor} will now include the term
$h^{2}|\ln\tau|$.
If $u_0$ is less regular, a more careful analysis (such as used in the proof of \cite[Theorem~3.2]{laz_review})
is required to deal with the contribution to the error induced by $\delta_t^\alpha \rho$.
\end{remark}

\section{Numerical results}\label{sec_Num}

  \begin{figure}[b!]
\begin{center}
\includegraphics[height=0.38\textwidth]{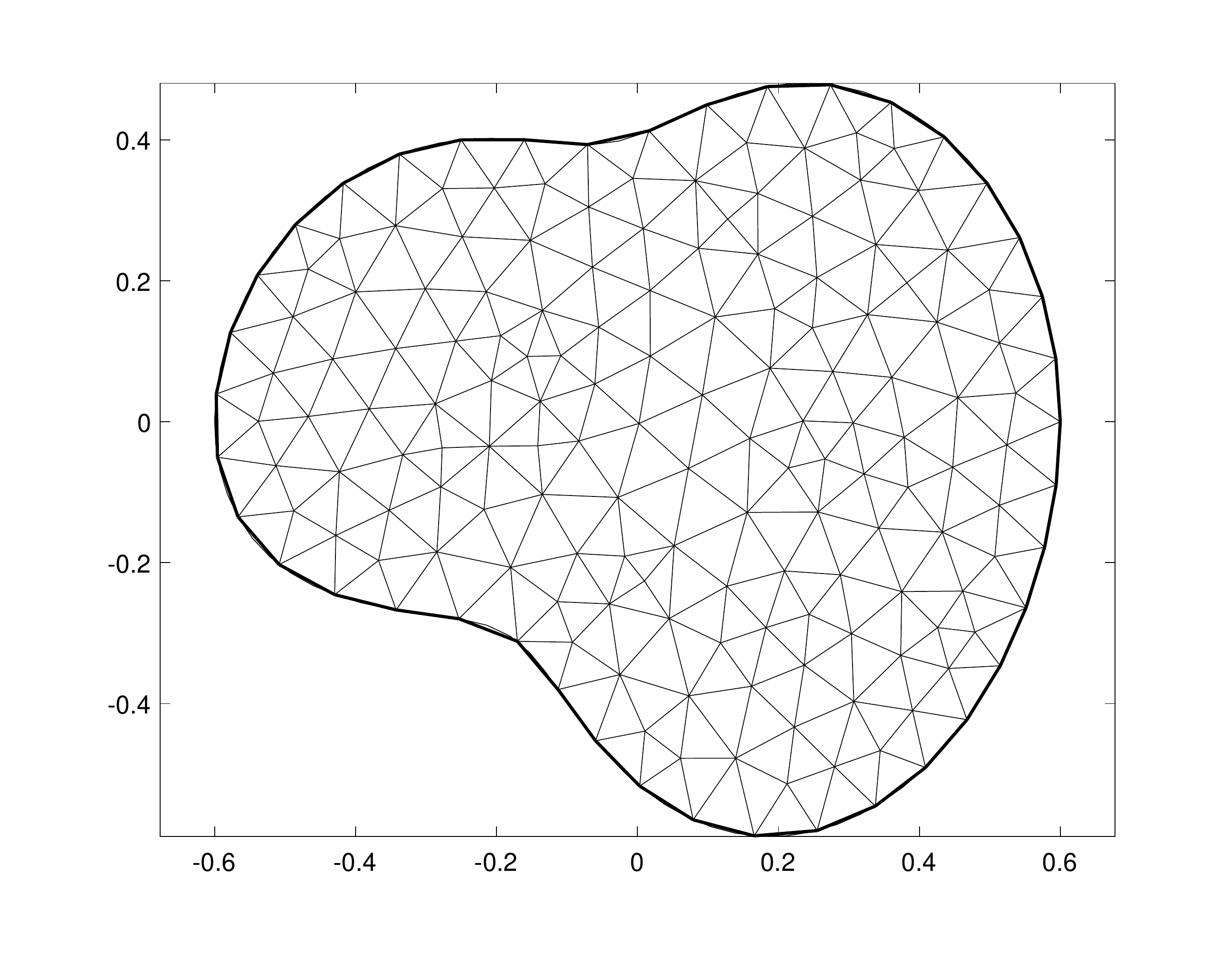}\hfill\includegraphics[height=0.38\textwidth]{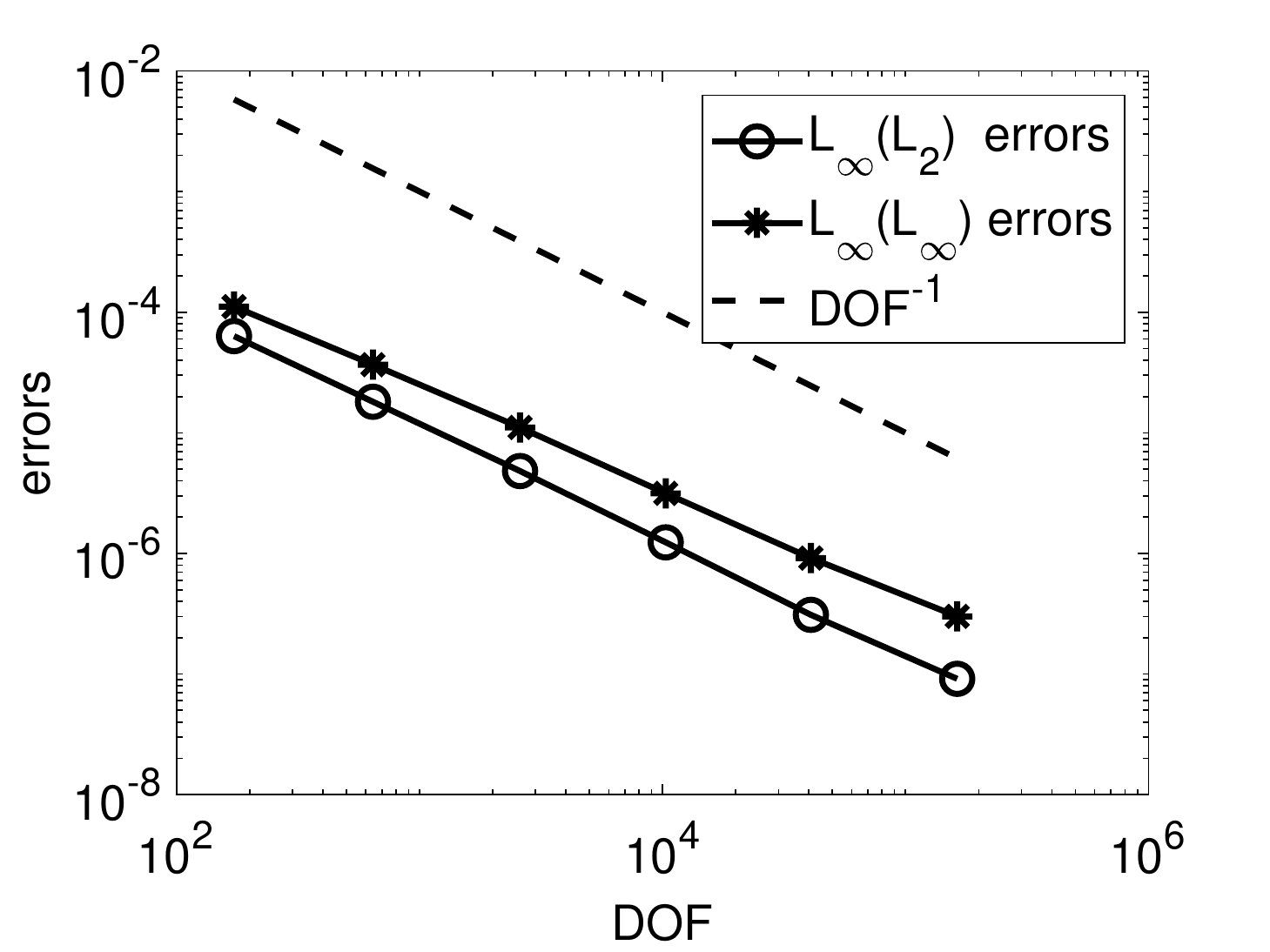}
\end{center}
\vspace{-0.3cm}
 \caption{\label{fig_mesh}\it\small
 Fractional-order parabolic test problem:
 Delaunay triangulation of $\Omega$ with DOF=172 (left),
maximum $L_2(\Omega)$ and $L_\infty(\Omega)$ errors for $\alpha=0.5$, $r=(3-\alpha)/\alpha$ and $M=2048$.}
 \end{figure}

\subsection{Parabolic case} Our fractional-order parabolic test problem  is \eqref{problem} with
$\LL=-(\pt_{x_1}^2+\pt_{x_2}^2)+\color{blue}(1+|x|^2)$, posed
in the domain $\Omega\times[0,1]$ (see Fig.\,\ref{fig_mesh}, left)
with $\pt\Omega$  parameterized by
$x_1(l):=\frac23R\cos\theta$ and $x_2(l):=R\sin\theta$, where
$R(l): =  0.4 + 0.5\cos^2\! l$ and
$\theta(l) := l + e^{(l-5)/2}\sin(l/2)\sin l$ for $l\in[0,2\pi]$; see \cite[\S7]{NK_MC_L1}.
We choose  $f$, as well as the initial and non-homogeneous boundary conditions, so that the unique exact solution
$u=t^{\alpha}\cos(xy)$.
This problem is discretized by \eqref{FE_problem} (with an obvious modification for the case of non-homogeneous boundary conditions)
using
lumped-mass linear finite elements 
on quasiuniform Delaunay triangulations of $\Omega$ (with DOF denoting the number of degrees of freedom in space).

The errors in the maximum $L_2(\Omega)$ norm
are shown in
Fig.\,\ref{fig_mesh} (right) and
Table~\ref{t2}
for, respectively, a large fixed $M$ and DOF.
In the latter case, we also give computational rates of convergence.
The errors were computed
using the piecewise-linear interpolant $u^I\in S_h$ in $\Omega$ of the exact solution
as
$\max_{m=1,\ldots, M}\|u_h-u^I\|_{L_2(\Omega)}$.
The graded temporal mesh
$\{t_j=T(j/M)^r\}_{j=0}^M$ was used
with the optimal $r= (3-\alpha)/\alpha$;  see Remark~\ref{rem_global_time}.
In view of the latter remark, by
Corollary~\ref{cor_L2},
the errors are expected to be $\lesssim M^{-(3-\alpha)}+h^2$, where $h^2\simeq \mbox{DOF}^{-1}$. 
Our numerical results clearly confirm the sharpness of this corollary for the considered case.

{\color{blue}Fig.\,\ref{fig_mesh} (right) also shows the errors in the maximum $L_\infty(\Omega)$ norm. Although it is not clear how
the error analysis of Section~\ref{sec_parabolic} can be generalized for this case,
it is worth noting that our numerical results (in Fig.\,\ref{fig_mesh}, as well as a version of Table~\ref{t2} for this case)
suggest that
the errors in the maximum $L_\infty(\Omega)$ norm are  $\lesssim M^{-(3-\alpha)}+h^2|\ln h|$.}

 {
\begin{table}[t!]
\begin{center}
\caption{Fractional-order parabolic test problem:
 maximum $L_2(\Omega)$ errors (odd rows) and
computational rates $q$ in $M^{-q}$ (even rows) for
 $r=(3-\alpha)/\alpha$ and spatial DOF=255435}
\label{t2}
\vspace{-0.2cm}
{\small\color{blue}
\begin{tabular}{rrrrrrrr}
\hline
\strut\rule{0pt}{9pt}&&
$M=32$&$M=64$& $M=128$&$M=256$&
$M=512$& $M=1024$\\
\hline
$\alpha=0.3$&
&4.885e-2	&8.787e-3	&1.426e-3	&2.239e-4	&3.470e-5	&5.353e-6	\\	
&&2.475	&2.623	&2.672	&2.690	&2.697	\\[3pt]	
$\alpha=0.5$&
&2.305e-3	&4.802e-4	&9.012e-5	&1.631e-5	&2.911e-6	&5.164e-7\\	
&&2.263	&2.414	&2.466	&2.486	&2.495\\[3pt]
$\alpha=0.7$&
%
&7.683e-4	&2.030e-4	&4.561e-5	&9.772e-6	&2.030e-6	&4.163e-7\\	
&&1.920	&2.154	&2.223	&2.267	&2.286\\	
\hline
\end{tabular}}
\end{center}
\end{table}
}

  \begin{figure}[b!]
\begin{center}
\includegraphics[height=0.30\textwidth]{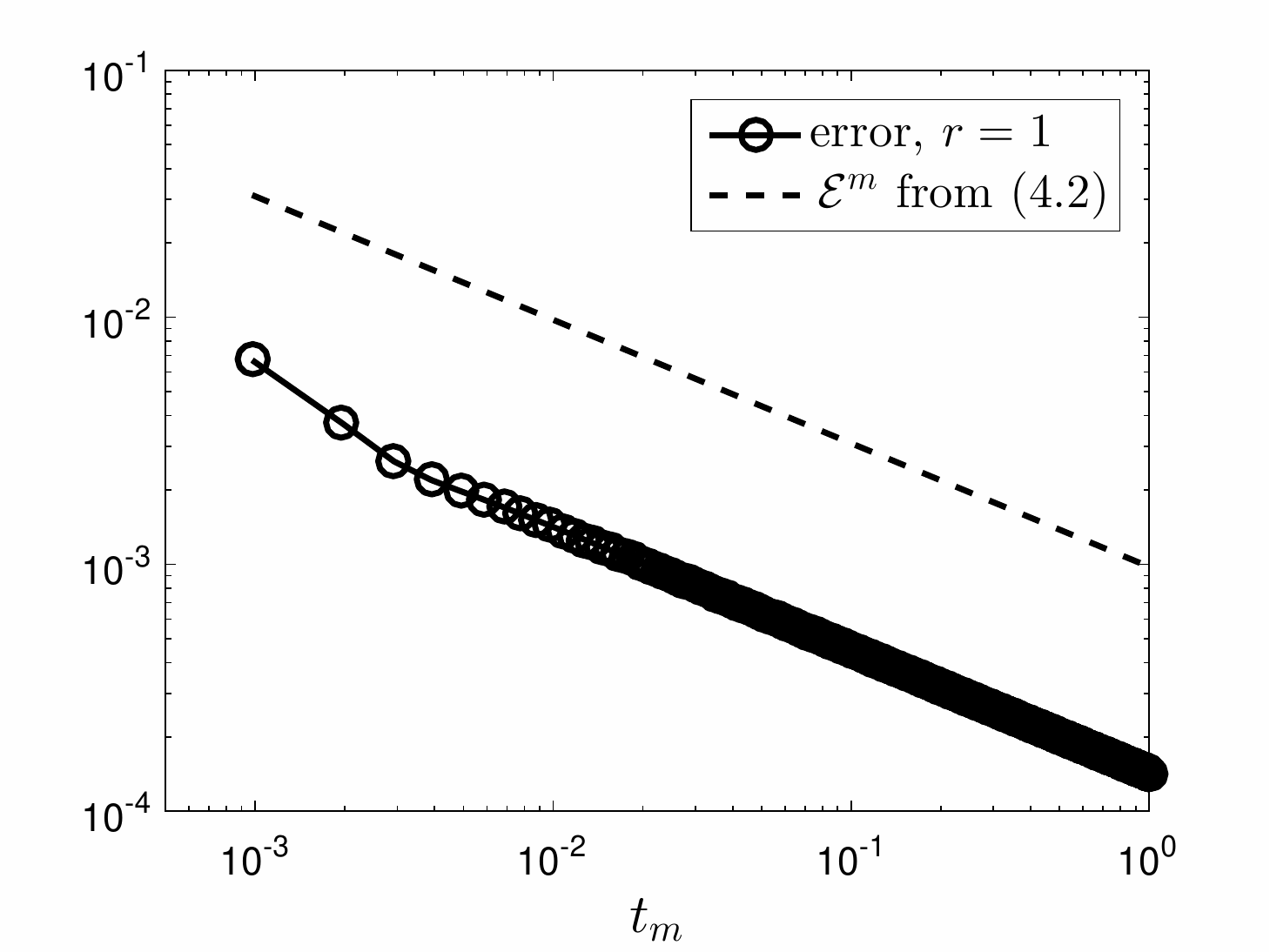}~~\includegraphics[height=0.30\textwidth]{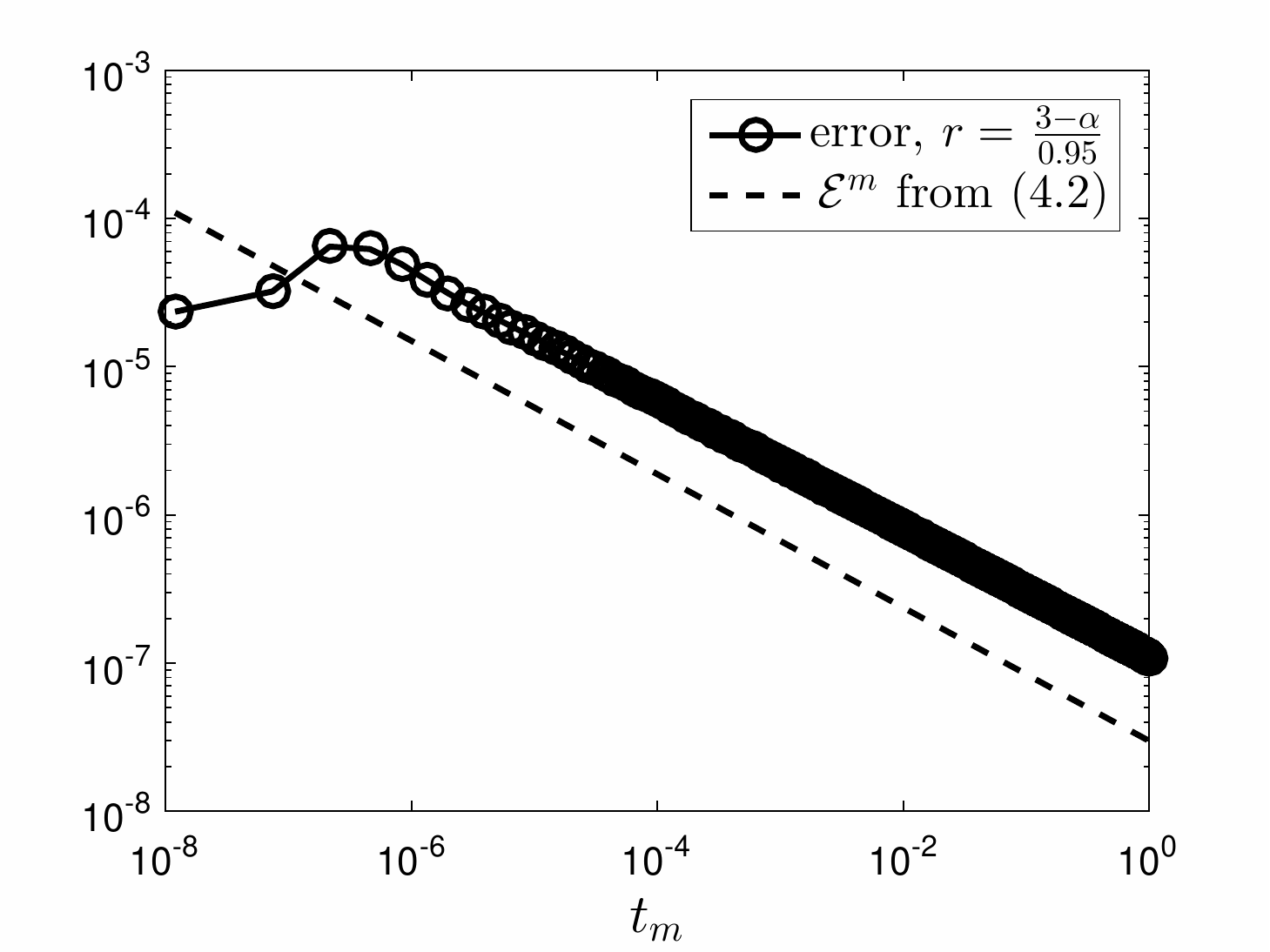}%
\\%
\includegraphics[height=0.30\textwidth]{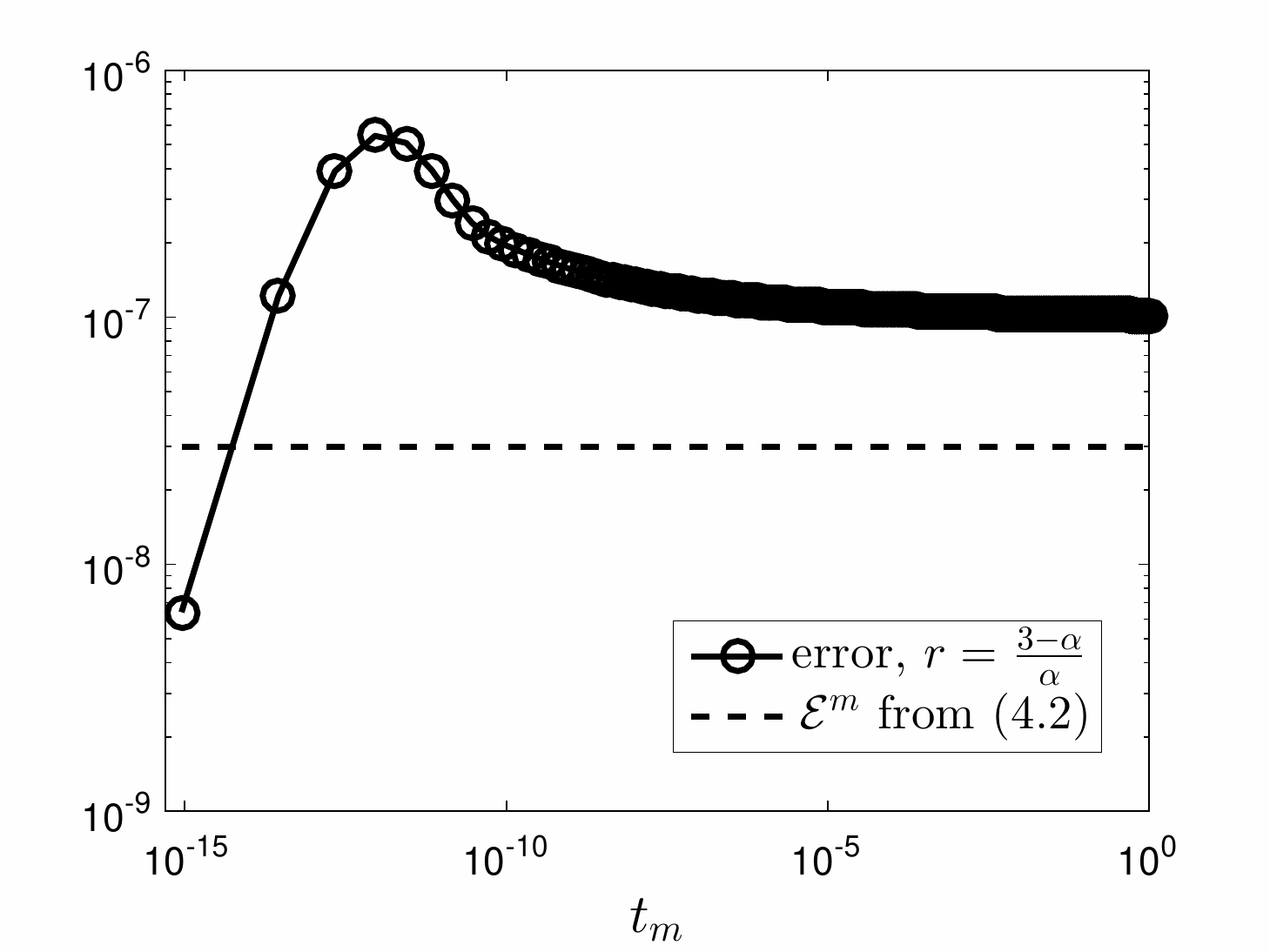}~~\includegraphics[height=0.30\textwidth]{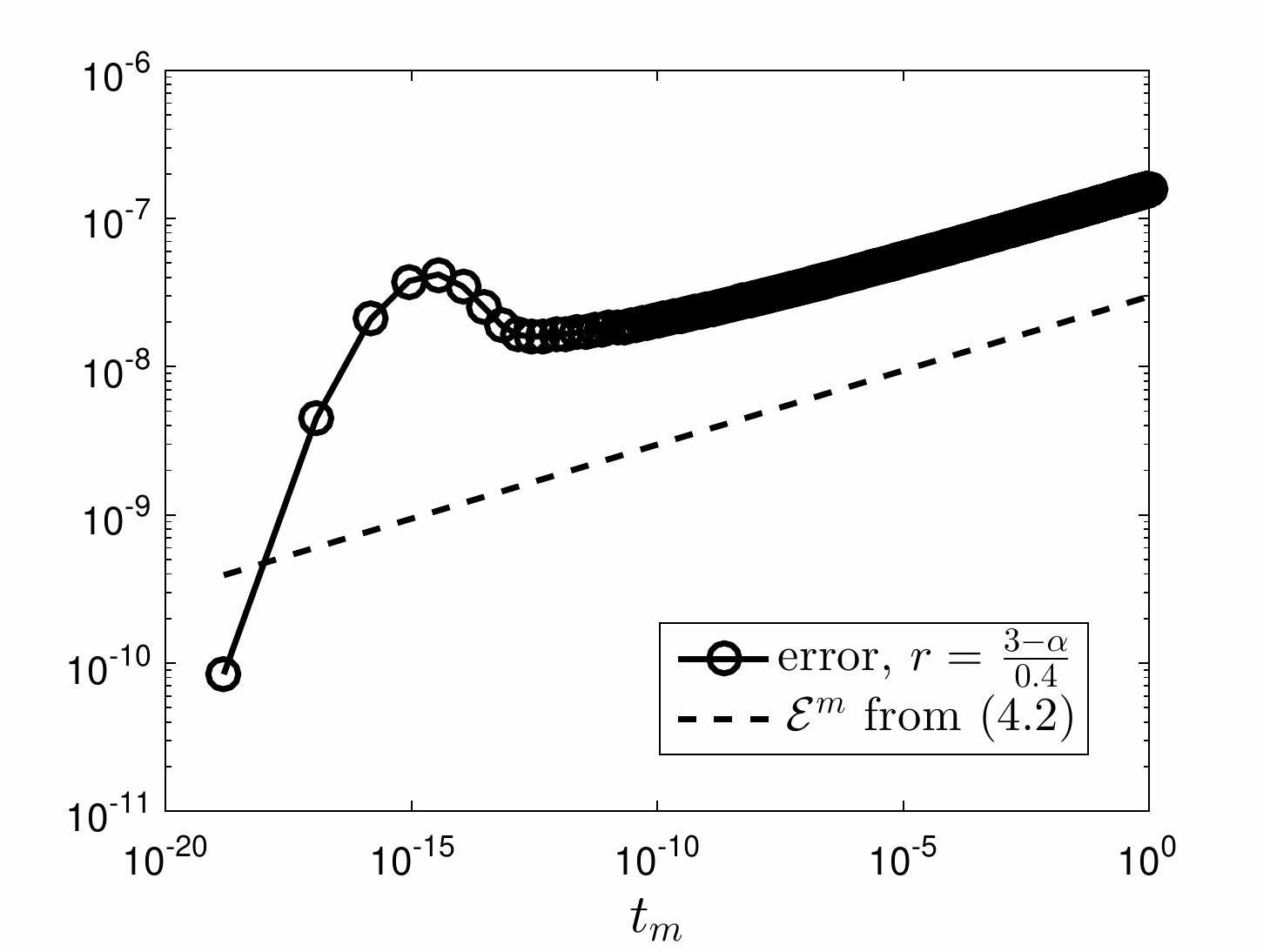}%
\end{center}
\vspace{-0.3cm}
 \caption{\label{pointwise_fig}\it\small
Initial-value test problem: pointwise errors for $\alpha=0.5$ and $M=1024$, cases $r=1$, $r=(3-\alpha)/0.95$, $r=(3-\alpha)/\alpha$ and $r=(3-\alpha)/0.4$.}
 \end{figure}

{
\begin{table}[h!]
\begin{center}
\caption{%
Initial-value test problem:
  errors at $t=1$ (odd rows) and
computational rates $q$ in $M^{-q}$ (even rows) for
 $r=1$, 
 $r=(3-\alpha)/.95$ and $r=(3-\alpha)/\alpha$}
\label{t0_positive_time}
\tabcolsep=4pt
\vspace{-0.2cm}
{\small
\begin{tabular}{lrrrrrrr}
\hline
\strut\rule{0pt}{9pt}&&
$\;\;\;M=2^5$&$M=2^7$& $M=2^9$&$M=2^{11}$&
 $M=2^{13}$&$M=2^{15}$\\
\hline
$r=1$
&$\alpha=0.3$
&3.324e-3	&8.297e-4	&2.073e-4	&5.182e-5	&1.296e-5	&3.239e-6\\
&&1.001	&1.000	&1.000	&1.000	&1.000\\[2pt]
&$\alpha=0.5$
&4.557e-3	&1.141e-3	&2.852e-4	&7.132e-5	&1.783e-5	&4.457e-6\\	
&&0.999	&1.000	&1.000	&1.000	&1.000\\[2pt]
%
%
&$\alpha=0.7$
&4.501e-3	&1.127e-3	&2.818e-4	&7.047e-5	&1.762e-5	&4.405e-6\\	
&&0.999	&1.000	&1.000	&1.000	&1.000\\		
%
\hline\strut\rule{0pt}{9pt}
$r=\frac{3-\alpha}{.95}$
&$\alpha=0.3$
&1.570e-4	&3.435e-6	&7.601e-8	&1.701e-9	&3.843e-11	&8.771e-13\\
&&2.757	&2.749	&2.741	&2.734	&2.727	\\[2pt]
&$\alpha=0.5$
&5.440e-4	&1.828e-5	&6.038e-7	&1.972e-8	&6.384e-10	&2.053e-11\\
&&2.447	&2.460	&2.468	&2.474	&2.480\\[2pt]	
%
%
&$\alpha=0.7$
&9.278e-4	&4.524e-5	&2.101e-6	&9.477e-8	&4.191e-9	&1.827e-10\\
&&2.179	&2.214	&2.235	&2.249	&2.260\\	
\hline
\strut\rule{0pt}{9pt}$r=\frac{3-\alpha}{\alpha}$
&$\alpha=0.3$
&8.360e-4	&1.481e-5	&2.950e-7	&6.248e-9	&1.373e-10	&3.088e-12\\	
&&2.910	&2.825	&2.781	&2.754	&2.737\\[2pt]	
&$\alpha=0.5$
&7.448e-4	&1.973e-5	&5.839e-7	&1.788e-8	&5.541e-10	&1.726e-11\\	
&&2.619	&2.539	&2.515	&2.506	&2.503\\[2pt]
%
%
&$\alpha=0.7$
&9.391e-4	&3.381e-5	&1.320e-6	&5.339e-8	&2.188e-9	&9.009e-11\\	
&&2.398	&2.340	&2.314	&2.304	&2.301\\
\hline
\end{tabular}}
\end{center}
\end{table}
}

{
\begin{table}[h!]
\begin{center}
\caption{%
Initial-value test problem:
 maximum nodal errors (odd rows) and
computational rates $q$ in $M^{-q}$ (even rows) for
 $r=1$, $r=3-\alpha$ and $r=(3-\alpha)/\alpha$}
\label{t0_global}
\tabcolsep=4pt
\vspace{-0.2cm}
{\small
\begin{tabular}{lrrrrrrr}
\hline
\strut\rule{0pt}{9pt}&&
$\;\;\;M=2^5$&$M=2^7$& $M=2^9$&$M=2^{11}$&
 $M=2^{13}$&$M=2^{15}$\\
\hline
$r=1$
&$\alpha=0.3$
&6.524e-2	&4.304e-2	&2.840e-2	&1.873e-2	&1.236e-2	&8.155e-3\\
&&0.300	&0.300	&0.300	&0.300	&0.300\\[2pt]
&$\alpha=0.5$
&3.794e-2	&1.897e-2	&9.484e-3	&4.742e-3	&2.371e-3	&1.186e-3\\	
&&0.500	&0.500	&0.500	&0.500	&0.500\\[2pt]	
%
%
&$\alpha=0.7$
&1.631e-2	&6.180e-3	&2.342e-3	&8.874e-4	&3.363e-4	&1.274e-4\\
&&0.700	&0.700	&0.700	&0.700	&0.700\\	
%
\hline\strut\rule{0pt}{9pt}
$r=3-\alpha\;\;$
&$\alpha=0.3$
&2.131e-2	&6.934e-3	&2.256e-3	&7.339e-4	&2.388e-4	&7.768e-5\\	
&&0.810	&0.810	&0.810	&0.810	&0.810\\[2pt]	
&$\alpha=0.5$
&6.185e-3	&1.093e-3	&1.933e-4	&3.417e-5	&6.040e-6	&1.068e-6\\	
&&1.250	&1.250	&1.250	&1.250	&1.250\\[2pt]	
%
%
&$\alpha=0.7$
&1.867e-3	&2.004e-4	&2.151e-5	&2.308e-6	&2.477e-7	&2.659e-8\\
&&1.610	&1.610	&1.610	&1.610	&1.610\\	
\hline
\strut\rule{0pt}{9pt}$r=\frac{3-\alpha}{\alpha}$
&$\alpha=0.3$
&6.510e-2	&1.542e-3	&3.652e-5	&8.648e-7	&2.048e-8	&4.851e-10\\	
&&2.700	&2.700	&2.700	&2.700	&2.700\\[2pt]	
&$\alpha=0.5$
&3.142e-3	&9.820e-5	&3.069e-6	&9.590e-8	&2.997e-9	&9.365e-11\\	
&&2.500	&2.500	&2.500	&2.500	&2.500\\[2pt]	
%
%
&$\alpha=0.7$
&1.273e-3	&5.247e-5	&2.164e-6	&8.922e-8	&3.679e-9	&1.517e-10\\	
&&2.300	&2.300	&2.300	&2.300	&2.300\\
\hline
\end{tabular}}
\end{center}
\end{table}
}

\subsection{Pointwise sharpness of error estimate for the initial-value problem}
Here, to demonstrate the sharpness of the error estimate (\ref{error}) given by Theorem~\ref{theo_simplest},
we consider the simplest initial-value fractional-derivative test problem \eqref{simplest} 
with the simplest typical exact solution $u(t):=t^\alpha$.
Table~\ref{t0_positive_time} shows the errors and the corresponding convergence rates at $t=1$, which agree with~(\ref{error}), in view of
Remark~\ref{rem_positive_time}.
In particular, the latter implies that the errors are
$\lesssim M^{-\min \{ r,3-\alpha\}}$ for $r\neq 3-\alpha$.
The maximum errors and corresponding convergence rates given in Table~\ref{t0_global}
clearly confirm the conclusions of Remark~\ref{rem_global_time},
which predicts from the pointwise  bound~(\ref{error}) that the global errors are
$\lesssim M^{-\min \{\alpha r,3-\alpha\}}$.
Furthermore, in Fig.~\ref{pointwise_fig}, the pointwise errors for various $r$ are compared with the pointwise theoretical error bound~(\ref{error}),
and again, with the exception of a few initial mesh nodes,  we observe remarkably good agreement.
Note that Fig.~\ref{pointwise_fig} only addresses the case $\alpha=0.5$, but for other values of $\alpha$ we observed similar consistency of (\ref{error})
with the actual pointwise errors.


\begin{thebibliography}{10}








\bibitem{BrenScott}
S.~C.~Brenner and L.~R.~Scott, {\it The mathematical theory of finite element methods},  Springer-Verlag, New York, third ed., 2008.

\bibitem{ChenMS_JSC} {H.~Chen and  M.~Stynes}, {\it Error analysis of a second-order method on fitted meshes for a time-fractional diffusion problem}, J. Sci. Comput.  79  (2019),  624--647.

\bibitem{Diet10}
K.~Diethelm,
{\it The analysis of fractional differential equations},
  {Lecture Notes in Mathematics},
 Springer-Verlag, Berlin, 2010.

 \bibitem{jcp_gao}
{G.~Gao, Z.~Sun and H.~Zhang}, {\it A new fractional numerical differentiation formula to approximate the Caputo fractional derivative and its applications}, J. Comput. Phys. 259 (2014), 33--50.

\bibitem{gracia_etal_cmame}
J.~L.~Gracia, E.~O'Riordan and M.~Stynes,
{\it Convergence in positive time for a finite difference method applied to a fractional convection-diffusion problem},
Comput. Methods Appl. Math. 18 (2018),
 33--42

\bibitem{laz_2fully_16}
B.~Jin, R.~Lazarov and Z.~Zhou, {\it Two fully discrete schemes for fractional diffusion and
diffusion-wave equations with nonsmooth data}, SIAM J. Sci. Comput. 38 (2016), A146--A170.

\bibitem{laz_review}
{B.~Jin, R.~Lazarov and Z.~Zhou}, {\it
Numerical methods for time-fractional evolution equations with nonsmooth
data: a concise overview},
Comput. Methods Appl. Mech. Engrg. 346 (2019), 332--358.

 \bibitem{NK_MC_L1}
 {N.~Kopteva}, {\it Error analysis of the L1 method on graded and uniform meshes for a fractional-derivative problem in two and three dimensions}, Math. Comp. (2019), published electronically 23-Jan-2019;
 doi: 10.1090/mcom/3410.

\bibitem{NK_XM}
 {N.~Kopteva and X.~Meng}, {\it Error analysis for a fractional-derivative parabolic problem
on quasi-graded meshes
using barrier functions}, SIAM J. Numer. Anal., 58 (2020), 1217--1238.

\bibitem{higher_order}
{C.~Lv and C.~Xu},
{\it Error analysis of a high order method for time-fractional diffusion equations},
SIAM J. Sci. Comput.  38  (2016),   A2699--A2724.

\bibitem{stynes_too_much_reg}
M.~Stynes, {\it Too much regularity may force too much uniqueness}, Fract. Calc. Appl. Anal. 19 (2016), 1554--1562.

\bibitem{stynes_etal_sinum17}
M.~Stynes, E.~O'Riordan and J.~L.~Gracia, {\it Error analysis of a finite difference method on graded meshes for a time-fractional diffusion equation}, SIAM J. Numer. Anal.  55  (2017),  1057--1079.

\bibitem{jcp_xing_yan}
{Y.~Xing and Y.~Yan}, {\it A higher order numerical method for time fractional partial differential equations with nonsmooth data}, J. Comput. Phys.  357  (2018), 305--323.

\end{thebibliography}
\end{document}